\newtheorem{thm}{Theorem}[section]
\newtheorem{cor}[thm]{Corollary}
\newtheorem{lemma}[thm]{Lemma}
\newtheorem{prop}[thm]{Proposition}
\theoremstyle{definition}
\newtheorem{defn}[thm]{Definition}
\theoremstyle{remark}
\newtheorem{rem}[thm]{Remark}
\numberwithin{equation}{section}
\newcommand{\acts}{\ensuremath{\curvearrowright}}%
\newcommand{\sect}{\ensuremath{\dot}}%
\newcommand{\Z}{\mathbf{Z}}
\newcommand{\N}{\mathbf{N}}
\newcommand{\F}{\mathbf{F}}
\newcommand{\R}{\mathbf{R}}
\newcommand{\C}{\mathbf{C}}
\newcommand{\e}{\mathbf{e}}
\newcommand{\Q}{\mathbf{Q}}
\newcommand{\Pc}{\mathcal{P}_c}
\newcommand{\SL}{\ensuremath{\mathrm{SL}}}%
  \newcommand{\cO}{\ensuremath{\mathcal{O} }}%
\author[de la Salle]{Mikael de la Salle}
\address{UMPA, CNRS ENS de Lyon\\Lyon\\France}
\email{mikael.de.la.salle@ens-lyon.fr}
\thanks{M dlS was supported by ANR grants GAMME and AGIRA}
\title{Strong property (T) for higher rank lattices}
\begin{document}

\begin{abstract}

  We prove that every lattice in a product of higher rank simple Lie groups or higher rank simple algebraic groups over local fields has Vincent Lafforgue's strong property (T). Over non-archimedean local fields, we also prove that they have strong Banach proerty (T) with respect to all Banach spaces with nontrivial type, whereas in general we obtain such a result with additional hypotheses on the Banach spaces. The novelty is that we deal with non-cocompact lattices, such as $\mathrm{SL}_n(\Z)$ for $n \geq 3$. To do so, we introduce a stronger form of strong property (T) which allows us to deal with more general objects than group representations on Banach spaces that we call two-step representations, namely families indexed by a group of operators between different Banach spaces that we can compose only once. We prove that higher rank groups have this property and that this property passes to undistorted lattices.
\end{abstract}

\maketitle

\section{Introduction}

Kazhdan's property (T) is a rigidity property for unitary representations of a locally compact group, which has found numerous applications in various areas of pure and applied mathematics, see \cite{MR2415834}. Vincent Lafforgue's strong property (T) is a strengthening of property (T) which deals with representations by bounded operators with small exponential growth of the norm. Its introduction in \cite{MR2423763} was motivated by the Baum-Connes conjecture, as it is a natural obstruction to apply Lafforgue's approach to the Baum-Connes conjecture, see \cite{MR2732057}. It has also found several applications, notably its Banach-space version that we will discuss below, as it provided the first examples of superexpanders (expanders which do not coarsely embed into any uniformly convex Banach space), and as it implies strong fixed point properties for affine actions on Banach spaces. Another notable recent application is also to dynamics, as it was one of the steps in spectacular progresses on the Zimmer program \cite{brownFisherHurtado}.

So far strong property (T) has been shown for higher rank connected simple Lie groups (or higher rank simple algebraic groups over local fields) and their cocompact lattices. The case when the Lie algebra contains $\mathfrak{sl}_3$ was proven by Lafforgue in \cite{MR2423763}. The generalization to other algebraic groups was done by Liao \cite{liao} (for non-archimedean local fields) and de Laat and the author \cite{strongTsp4} (for archimedean local fields, \emph{i.e.} $\R$). In particular before the present work it was not known whether $\mathrm{SL}_3(\Z)$ has strong property (T). The aim of this article is to extend these results to cover the lattices which are not cocompact (for example $\mathrm{SL}_3(\Z)$) as well. This will have consequences on the Zimmer program \cite{brownFisherHurtado2}. We also take the opportunity to state and prove all the results more generally for (lattices in) semisimple groups rather than simple groups, and also to some non semisimple Lie groups (Remark \ref{rem:nonsemisimple}).

In the whole article, \emph{local field} will mean commutative, non-discrete locally compact topological field. So a local field is a finite extension of $\R$ (in which case it is \emph{archimedean}), or of $\Q_p$ or $\F_p(\!(t)\!)$ for some prime number $p$ (in which case it is \emph{non-archimedean}). \emph{Higher rank simple group} will mean either real connected simple Lie group of real rank $\geq 2$, or connected almost $\F$-simple algebraic group of $\F$-split rank $\geq 2$ over a local field $\F$. \emph{Higher rank group} will stand for a finite product of Higher rank simple groups. We warn the reader that for us, products of rank one groups such as $\mathrm{SL}_2(\R) \times \mathrm{SL}_2(\Q_p)$ are not of higher rank. We refer to \cite[Chapter I]{MR1090825} for the terminology. Note that real connected simple Lie group or real rank $\geq 2$ is more general than connected almost simple algebraic group of split rank $\geq 2$ over $\R$. It includes for example some groups with infinite center, as the infinite covering group of $\mathrm{Sp}_{2n}(\R)$.

Recall that a lattice in a locally compact group $G$ is a discrete subgroup $\Gamma$ such that $G/\Gamma$ carries a $G$-invariant Borel probability measure.

\begin{thm}\label{thm:strong_T_lattices} Every lattice in a higher rank group has strong property (T).
\end{thm}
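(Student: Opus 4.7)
The plan is to follow the three-step strategy indicated in the abstract. First, I would introduce a strengthening of strong property (T) that concerns \emph{two-step representations}: families $(\pi(g))_{g \in G}$ of bounded operators between (possibly different) Banach spaces, with the proviso that one is only allowed to compose them once. Then I would prove that every higher rank group enjoys this stronger property, and that the property descends to any \emph{undistorted} lattice, meaning a lattice whose word length is comparable to the restriction of a proper length function on the ambient group. Since by the Lubotzky--Mozes--Raghunathan theorem every lattice in a higher rank group is undistorted, combining these three ingredients will yield the theorem.

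For higher rank groups themselves, I would extend the existing proofs of strong property (T) due to Lafforgue \cite{MR2423763}, Liao \cite{liao} and de Laat--de la Salle \cite{strongTsp4}. Those proofs produce, for every representation $\pi \colon G \to B(X)$ of small exponential growth, a family of bi-$K$-invariant, compactly supported averages $\pi(m_t) = \int \pi(g) \, dm_t(g)$ that converge in operator norm to a projection onto the invariant vectors. The key input is a quantitative decay estimate for matrix coefficients restricted to a rank-one subgroup of $\SL_2$-type inside $G$. Because the measures $m_t$ are compactly supported and enter linearly in $\pi(g)$, the same estimates should adapt to the two-step setting: applying the averaging operator once to a two-step family produces an operator between the two Banach spaces whose norm is still controlled by the same spherical estimates.

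The subtle step, and the whole point of the two-step framework, is the descent to non-cocompact lattices. For cocompact $\Gamma \subset G$, the classical transfer induces a $\Gamma$-representation $\pi$ on $X$ to a $G$-representation on $L^2(G/\Gamma, X)$, applies the property for $G$, and reads off the conclusion for $\Gamma$ by restricting to characteristic functions of a fundamental domain. For non-cocompact $\Gamma$, the cocycle $c \colon G \times G/\Gamma \to \Gamma$ associated to a Borel fundamental domain is unbounded, so the induced $G$-representation fails to have small exponential growth and this argument collapses. The virtue of the two-step relaxation is that one retains only the \emph{first} step of the induction---an operator morally of the form $\xi \mapsto \bigl( g \mapsto \pi(c(g)) \xi \bigr)$ from $X$ into functions on $\Gamma \backslash G$---which is well defined even when $c$ is unbounded. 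The undistortion hypothesis is exactly what ensures that the length of $c(g)$ in $\Gamma$ grows at most linearly in the length of $g$ in $G$, so that exponential-decay estimates obtained on $G$ translate into exponential-decay estimates on $\Gamma$.

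The main obstacle is therefore the design of the two-step formalism: it must be flexible enough that (i) the harmonic-analytic estimates driving strong property (T) for higher rank groups genuinely survive in the enlarged category of objects, and (ii) the induction procedure from $\Gamma$-representations lands in its scope even when $\Gamma$ is not cocompact. Once both points are established, the Lubotzky--Mozes--Raghunathan theorem supplies undistortion of arbitrary lattices in higher rank groups and the theorem follows.
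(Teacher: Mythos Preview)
Your overall architecture is the paper's: introduce two-step representations, prove the strengthened property for higher rank groups by adapting Lafforgue/Liao/de Laat--de la Salle, descend to lattices, and invoke Lubotzky--Mozes--Raghunathan. There is, however, a genuine gap in your account of the descent step, and it concerns precisely the point you flag as the main obstacle.

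You write that undistortion ``is exactly what ensures that the length of $c(g)$ in $\Gamma$ grows at most linearly in the length of $g$ in $G$'', and that this is why the induced two-step object is well-defined. This is not correct. Undistortion (LMR) says $\ell_\Gamma(\gamma) \lesssim \ell_G(\gamma)$ for $\gamma \in \Gamma$; it says nothing about the cocycle $\alpha(g,\omega)$ uniformly in $\omega \in \Omega$. One has $\ell_G(\alpha(g,\omega)) \lesssim \ell_G(g) + \ell_G(\omega) + \ell_G(g\cdot\omega)$, and for a non-cocompact lattice the $\omega$-terms are unbounded on the fundamental domain. What makes the induced map $\xi \mapsto \bigl(\omega \mapsto \pi_0(\alpha(g^{-1},\omega)^{-1})\xi\bigr)$ land in $L_2(\Omega;X_1)$---and likewise what makes the companion map $\widetilde\pi_1$ bounded from $L_2(\Omega;X_1)$ to $X_2$---is the \emph{exponential integrability} of the fundamental domain: the existence of $s_0>0$ with $\int_\Omega e^{s_0 \ell(\omega)}\,d\omega < \infty$. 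This is an independent ingredient (Theorem~\ref{thm:measure_of_cusps}), obtained not from undistortion but from spectral gap for $G \acts L_2(G/\Gamma)$, hence ultimately from property (T). The paper uses both facts, for different purposes: exponential integrability to make the induced two-step representation of $G$ well-defined and to deduce (*) for $(\Gamma,\ell_G|_\Gamma)$ (Theorem~\ref{thm:exponentially_integrable_lattices}); and only afterwards LMR to pass from the restricted length $\ell_G|_\Gamma$ to the intrinsic word length $\ell_\Gamma$. Your proposal conflates these two roles and omits the first entirely, so as stated the induction step would not go through.
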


Examples of lattices in higher rank groups include $\mathrm{SL}_{n}(\Z)$, $\mathrm{SL}_n(\F_p[t])$ and $\mathrm{SL}_n(\Z[\frac 1 p])$ for $n \geq 3$, or $\mathrm{Sp}_{2n}(\Z)$, $\widetilde{\mathrm{Sp}}_{2n}(\Z)$ (the preimage of $\mathrm{Sp}_{2n}(\Z)$ in the universal cover of $\mathrm{Sp}_{2n}(\R)$) and $\mathrm{Sp}_{2n}(\F_p[X])$ for $n \geq 2$. None of these examples is a cocompact lattice, so for all these cases Theorem \ref{thm:strong_T_lattices} is new.

When $\Gamma$ is a cocompact lattice in a locally compact group $G$, every representation of $\Gamma$ by bounded operators on a Hilbert (or Banach) space can be induced in a satisfactory way to a representation of $G$ by bounded operators on a Hilbert (Banach) space. This is what allows one to prove that (Banach) strong property (T) passes to cocompact lattices, see \cite{MR2423763}. As we shall explain in \S \ref{subsection:naive}, when $\Gamma$ is not cocompact, induction of representations which are not uniformly bounded does not behave well, and we do not see any reasonable way to define an induced Banach space representation. So the proof of Theorem \ref{thm:strong_T_lattices} \emph{does not proceed} by proving that strong property (T) passes to lattices. And we still have no idea whether such a statement is true (although amusingly, it is true that the negation of strong property (T) passes to lattices, see Corollary \ref{cor:NotStrTpasses_to_lattices}). This might appear at first sight a bit surprising, because it is now very well understood (this seems to go back at least to the proof of the normal subgroup theorem by Margulis) that, although they might not be cocompact, higher rank lattices are very much integrable (for example they are $L_p$-integrable for every $p<\infty$ in the sense of \cite{MR1767270}), and these good integrability properties enable to induce in a satisfactory way cocycles with values in isometric representations. The new idea that we introduce to overcome this difficulty is a form of induction of representation $\pi \colon \Gamma \to \mathrm{GL}(X)$ which, under some assumption on the integrability of the lattice and the growth rate of the norm of $\|\pi(\gamma)\|$, produces a representation-like object, where one is only allowed to compose once, and that we call a \emph{two-step representation}.
\begin{defn} A two-step representation of a topological group $G$ is a tuple $(X_0,X_1,X_2,\pi_0,\pi_1)$ where $X_0,X_1,X_2$ are Banach spaces and $\pi_i\colon G \to B(X_i,X_{i+1})$ are strongly continuous\footnote{\emph{i.e.} for every $x \in X_i$, the map $g \in G \mapsto \pi(g)x \in X_{i+1}$ is continuous, see Section \ref{sec:preliminaries}.} maps such that
  \[ \pi_1(gg') \pi_0(g'') = \pi_1(g) \pi_0(g' g'')\textrm{ for every }g,g',g'' \in G.\]

 In this case we will denote by $\pi \colon G \to B(X_0,X_2)$ the continuous map satisfying $\pi(gg')=\pi_1(g) \pi_0(g')$ for every $g,g' \in G$.
\end{defn}
It turns out that a form of strong property (T) also holds for two-step representations of higher rank groups. And this property passes to undistorted lattices (Theorem \ref{thm:exponentially_integrable_lattices}). This is the content of our main result Theorem \ref{thm:main}, which contains Theorem \ref{thm:strong_T_lattices} as a particular case.

Before stating it, we recall the notion of length function that we use, which contains as its main examples the word-length with respect to compact symmetric generating sets. A \emph{length function} on a locally compact topological group $G$ is a function $\ell \colon G \to \R^+$ such that
\begin{itemize}
\item $\ell$ is bounded on compact subsets of $G$.
\item $\ell(g^{-1})=\ell(g)$ for every $g \in G$.
\item $\ell(gh) \leq \ell(g)+\ell(h)$ for every $g,h \in G$.
\end{itemize}

The \emph{exponential growth rate} of a two-step representation $(X_0,X_1,X_2,\pi_0,\pi_1)$ with respect to a length function $\ell$ is
\[\max_{i=0,1} \limsup_{\ell(g) \to \infty} \frac{\log \|\pi_i(g)\|}{\ell(g)}.\]

We say that a pair $(G,\ell)$ of a locally compact group with a length function satisfies (*) if there exists $s,t,C >0$ and a sequence $m_n$ of \emph{positive} probability measures whose support is contained in $\{g|\ell(g) \leq n\}$ such that the following holds. Let $(X_0,X_1,X_2,\pi_0,\pi_1)$ be a two-step representation and $L$ a real number such that $X_1$ is a Hilbert space and $\|\pi_i(g)\|\leq L e^{s \ell(g)}$ for all $g \in G$ and $i \in \{0,1\}$. Then there is $P \in B(X_0,X_2)$ such that
\begin{equation}\label{eq:mn_Cauchy2}\|\pi(m_n) - P\| \leq  C L^2 e^{-tn},\end{equation}
and such that 
\begin{equation}\label{eq:lim_invariant2}\lim_n \|\pi(\delta_g \ast m_n \ast \delta_{g'}) - \pi(m_n)\|=0 \textrm{ for every }g,g'\in G.\end{equation}
Strong property (T) corresponds to the case when $X_0=X_1=X_2=X$ and $\pi$ is a representation. In that case \eqref{eq:lim_invariant2} is usually replaced by the equivalent property that $P$ is a projection on the space of invariant vectors $\{ x \in X | \pi(g) x = x \forall g \in G\}$, parallel to a $\pi(G)$-invariant complement subspace. The condition \eqref{eq:lim_invariant2} is nothing but a reformulation which remains meaningful in the above generality when there is no such thing as invariant vector or projection.

We say that $G$ satisfies (*) if $(G,\ell)$ satisfies (*) for every length function $\ell$, or equivalently if $G$ is compactly generated and $(G,\ell)$ satisfies (*) for the word-length function coming from a compact generating set. See Lemma \ref{lem:changing_length} for the equivalence.

\begin{thm}\label{thm:main} Every higher rank group or lattice in it satisfies the above property (*).
\end{thm}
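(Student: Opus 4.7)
The plan is to prove the theorem in two stages: first establish (*) for the ambient higher rank group $G$, then deduce (*) for any lattice $\Gamma \subset G$ by inducing two-step representations from $\Gamma$ to $G$.

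For $G$ itself I would adapt the existing proofs of strong property (T) for higher rank groups due to Lafforgue \cite{MR2423763}, Liao \cite{liao}, and de Laat and the author \cite{strongTsp4}. Those proofs construct, using the $KAK$ decomposition and harmonic analysis on $K \backslash G / K$, $K$-bi-invariant probability measures $m_n$ supported on the length shell $\{g : \ell(g) \asymp n\}$ for which $\|\pi(m_n) - P\| \leq C L^2 e^{-tn}$ holds for every Hilbert representation $\pi$ with $\|\pi(g)\| \leq L e^{s\ell(g)}$. I would take the same measures and factor them as $m_n = \mu_n \ast \nu_n$, then set $\pi(m_n) := \pi_1(\mu_n)\pi_0(\nu_n)$. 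The two-step compatibility relation makes this coherent, and the analytic core of the existing arguments --- a decay estimate on the Hilbert middle space $X_1$ together with Lafforgue's interpolation trick to absorb the exponential growth factor --- should carry over since it only involves the composition of two operators rather than arbitrary words, at the cost of the factor $L^2$ rather than $L$ in the decay estimate.

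For the lattice part I would construct an induction adapted to two-step representations. Fix a Borel fundamental domain $F \subset G$ with associated cocycle $c \colon G \times F \to \Gamma$, and given a two-step representation of $\Gamma$ on $(X_0, X_1, X_2)$, set $\widetilde{X}_i = L^2(F, X_i)$ and
\begin{equation*}
(\widetilde{\pi}_i(g)\xi)(x) = \pi_i\bigl(c(g, g^{-1}x)\bigr)\, \xi(g^{-1} x).
\end{equation*}
The two-step identity for $\widetilde{\pi}$ follows from the cocycle identity combined with the two-step identity for $\pi$, while $\widetilde{X}_1$ is a Hilbert space because $X_1$ is. Applying (*) for $G$ to this induced two-step representation then produces a measure-independent operator $\widetilde{P}$ with the required bounds on $G$, and restricting to the diagonal copies of $X_i$ inside $\widetilde{X}_i$ (those $\xi$ satisfying the $\Gamma$-equivariance condition) yields the desired $P$ and estimates for $\Gamma$.

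The main obstacle is the control of the exponential growth rate of $\|\widetilde{\pi}_i(g)\|$ after induction: the cocycle $c(g,x)$ has $\Gamma$-length comparable to $\ell_G(g) + \ell_G(x)$, so that concluding $\|\widetilde{\pi}_i(g)\| \leq L' e^{s'\ell_G(g)}$ requires the integrability $\int_F e^{2s\ell_G(x)}\, d\mu(x) < \infty$, which is exactly the $L^p$-integrability of higher rank lattices for every finite $p$ in the sense of \cite{MR1767270} (together with undistortion to translate $\ell_\Gamma$ into $\ell_G$). This is the precise point where the strategy succeeds for two-step representations but fails for ordinary ones: the composition $\widetilde{\pi}_1(g)\widetilde{\pi}_0(g')$ invokes only one cocycle multiplication, hence only a single exponential moment, whereas iterated composition of an induced unbounded representation would require arbitrarily many moments and is hopeless for non-cocompact $\Gamma$. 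A secondary subtlety is matching the exponent $s$ in (*) for $G$ with the new growth rate produced by induction, which forces one either to establish (*) for $G$ with arbitrarily large $s$ (via the interpolation of \cite{MR2423763}) or to accept an unavoidable loss in the exponent when descending to $\Gamma$.
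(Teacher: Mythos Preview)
Your induction construction for the lattice step has a genuine gap. With $\widetilde{X}_i = L^2(F, X_i)$ for all $i$ and $\widetilde{\pi}_i(g)$ acting by the cocycle as you write, the operator $\widetilde{\pi}_i(g) \colon L^2(F, X_i) \to L^2(F, X_{i+1})$ is the composition of a measure-preserving translation with pointwise multiplication by $\pi_i(c(g,\cdot))$. Its operator norm is therefore the \emph{essential supremum} of $\|\pi_i(c(g,x))\|$ over $x \in F$, not an $L^2$ integral. For non-cocompact $\Gamma$ this supremum is infinite whenever $\pi_i$ is not uniformly bounded, so $\widetilde{\pi}_i(g)$ is unbounded --- this is exactly the obstruction recalled in \S\ref{subsection:naive}. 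No integrability hypothesis on the fundamental domain rescues the construction as written; your claim that the bound ``requires the integrability $\int_F e^{2s\ell_G(x)}\,d\mu(x)<\infty$'' is simply false for these spaces.

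The paper's fix is to make the induction asymmetric: keep $\widetilde{X}_0 = X_0$ and $\widetilde{X}_2 = X_2$ unchanged, and only enlarge the middle space to $\widetilde{X}_1 = L_2(\Omega; X_1)$. Then $\widetilde{\pi}_0(g) \colon X_0 \to L_2(\Omega; X_1)$ sends $x$ to $\omega \mapsto \pi_0(\alpha(g^{-1},\omega)^{-1})x$, whose $L_2$ norm is controlled by $\bigl(\int_\Omega e^{2s\ell(\alpha(g^{-1},\omega))}\,d\omega\bigr)^{1/2}$, and $\widetilde{\pi}_1(g) \colon L_2(\Omega; X_1) \to X_2$ is defined by integration over $\Omega$ and bounded via Cauchy--Schwarz by the same quantity. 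This is where an integrability condition genuinely enters, and it is \emph{exponential} integrability (Theorem~\ref{thm:measure_of_cusps}, deduced from spectral gap), which is strictly stronger than the $L^p$-integrability for all finite $p$ from \cite{MR1767270} that you invoke. A further subtlety you do not address is how to recover the measures $m_n$ on $\Gamma$ and the invariance condition \eqref{eq:lim_invariant2} from the induced picture; the paper does this via a second auxiliary induction landing in $L^1(G,\nu;X_2)$.

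For the ambient group your sketch is in the right spirit but misses the actual mechanism. There is no need to factor $m_n$ as a convolution; the point is Lemma~\ref{lem:properties_of_d}(\ref{item:local_estimate}): one writes $\langle y, \pi(g_1 k g_2) x\rangle$ as the pairing of $k\mapsto \pi_0(k^{-1}g_2)x \in L_2(K;X_1)$ against $k\mapsto \pi_1(g_1 k)^*y \in L_2(K;X_1)^*$, and uses that $X_1$ is Hilbert to bound differences by $\|\lambda(\mu-\mu')\|_{B(L_2(K))}$. This replaces what you call ``Lafforgue's interpolation trick''; no interpolation is used, and there is no way to obtain (*) for arbitrarily large $s$ as you suggest at the end.
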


Examples of maps $\pi$ as in (*) are when $\mathcal X$ is a topological vector space (for example the space of measurable functions on a manifold, or just a measure space) and $\pi \colon G \to \mathrm{GL}(\mathcal X)$ is a continuous representation of $G$ on $\mathcal X$ which a priori does not preserve any Banach space in $X$ (for example because of losing of derivatives, as in the Nash-Moser theorem, or of integrability). But there are three Banach spaces $X_0,X_1,X_2$ with $X_1$ a Hilbert space with continuous embeddings into $\mathcal X$ (for example encoding different scales of derivability or integrability) and such that $\pi(g)$ maps $X_i$ to $X_{i+1}$ with norm $\leq L e^{s \ell(g)}$. In that situation we can apply the conclusion of the theorem. In particular, we get, for every $x \in X_0$, that $\pi(m_n) x$ converges in the norm of $X_1$ (and hence in the topology of $\mathcal X$) to a $\pi(G)$-invariant vector. In this setting, property (*) has therefore to be seen as a procedure to systematically produce and locate invariant vectors in $\mathcal X$.

I would like to point out that, even if one is only interested in strong property (T) (so to representations on Hilbert spaces), it is crucial that in property (*) we allow arbitrary Banach spaces $X_0$ and $X_2$. Indeed, the induction procedure explained in Subsection \ref{subsection:induction}, which is the heart of this work, cannot produce Hilbert spaces but more general Banach spaces (namely Hilbert-space valued $L_p$ spaces for various values of $p$).

\subsection*{Banach space extensions}
Higher rank groups over non-archimedean local fields and their
cocompact lattices are known to satisfy strong Banach property (T)
with respect to every class of Banach spaces of nontrivial
(Rademacher) type \cite{MR2574023,liao} (see Section
\ref{sec:preliminaries} for the definitions). Moreover, this class is
essentially the optimal class. Although some partial results have been
obtained \cite{Salle2015,strongTsp4,dlMdlSAIF}, it is still not known
whether the same holds over the real numbers. I regard this question
as the main open problem on the subject, as a positive answer would
settle positively the conjecture in \cite{MR2316269} that every action
by isometries on a uniformly convex Banach space of a higher rank
lattice has a fixed point, and prove that the standard Cayley graphs
of $\mathrm{SL}_3(\Z/n\Z)$ form a family of superrexpanders.

In this article we also extend to all lattices the above mentionned results. 

To state the results, we introduce the following notion: if $\mathcal E$ is a class of Banach spaces we say that $G$ (respectively $(G,\ell)$) satisfies $(*_{\mathcal E})$ if in (*) the assumption that $X_1$ is a Hilbert space is replaced by $X_1 \in \mathcal E$.

The following result extends the results of Lafforgue and Liao \cite{MR2574023,liao}.

%% \begin{thm} Every lattice in a higher rank group over a non-archimedean local field has strong property (T) with respect to every class of Banach spaces of nontrivial Rademacher type.
%% \end{thm}

%% This is a particular case of
\begin{thm}\label{thm:main_Banach_valued_nonarch} Let $G$ be a higher rank simple group over a non-archimedean local field, or a lattice therein. Then $G$ satisfies $(*_{\mathcal E})$ for every class of Banach space $\mathcal E$ of nontrivial type.
\end{thm}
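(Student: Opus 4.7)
The plan is to reduce to Theorem \ref{thm:main} by following the same two-step architecture: first establish $(*_{\mathcal E})$ for the ambient higher rank simple group $G$ over a non-archimedean local field, and then transfer the property to lattices via the induction procedure underlying Theorem \ref{thm:exponentially_integrable_lattices}. What changes compared with the Hilbert case is that the middle space $X_1$ is now only assumed to lie in a class $\mathcal E$ of nontrivial type, so each step must be upgraded to work in this Banach setting.

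For the group itself, I would adapt the arguments of Liao \cite{liao}, which established strong Banach property (T) with respect to Banach spaces of nontrivial type, to the two-step framework. Here one uses that $G$ has an open maximal compact subgroup $K$, so that $K$-bi-invariant probability measures $m_n$ supported in $\{g:\ell(g)\leq n\}$ (such as uniform averages on spherical $K$-double cosets of the Bruhat--Tits building) are naturally available. The desired exponential decay then follows by combining a reduction to the spherical situation with Liao's estimates on $K$-bi-invariant averages acting on a Banach space of nontrivial type. The modification needed is to apply this analysis to the two pieces $\pi_0(g)\colon X_0\to X_1$ and $\pi_1(g)\colon X_1\to X_2$ separately and then reassemble them through the identity $\pi(gg')=\pi_1(g)\pi_0(g')$; the nontrivial type hypothesis is used only on $X_1$, exactly as in the Hilbert case.

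For a lattice $\Gamma\leq G$, I would apply the induction machinery of Theorem \ref{thm:exponentially_integrable_lattices}. Starting from a two-step representation of $\Gamma$ with middle space $X_1\in\mathcal E$, this procedure produces a two-step representation of $G$ whose middle space is an $L_p$-space of $X_1$-valued functions on a fundamental domain, for some $1<p<\infty$ depending on the integrability exponent of $\Gamma$ in $G$ and on the growth parameter $s$. The main obstacle is to verify that this induced middle space remains in a class of nontrivial type, so that the group-level result just established can be applied to it. This is precisely where the nontrivial type hypothesis pays off: the class of Banach spaces of nontrivial type is stable under $L_p$ constructions for $1<p<\infty$, since $L_p(\mu,Y)$ inherits nontrivial type from $Y$. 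A secondary point to track is that the type constant of the induced space depends controllably on that of $X_1$ and on $p$, so that the parameters $s,t,C$ in $(*)$ can be chosen uniformly over $X_1\in\mathcal E$. After applying the group-case result to the induced two-step representation and descending the conclusion back to $\Gamma$, one obtains $(*_{\mathcal E})$ for the lattice.
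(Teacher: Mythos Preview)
Your proposal is correct and follows essentially the same route as the paper: establish $(*_{\mathcal E})$ for the ambient group by adapting the Lafforgue--Liao harmonic analysis to the two-step setting (the paper does this explicitly for $\mathrm{SL}_3(\F)$ and $\mathrm{Sp}_4(\F)$ via Remark~\ref{rem:properties_of_d_Banach} and Proposition~\ref{prop:estimates_for_T_delta_Banach_nonarchimedean}, then extends to all higher rank simple groups through the subgroup argument of Section~\ref{sec:all_higher_rank_groups}), and then pass to lattices via Theorem~\ref{thm:exponentially_integrable_lattices} in its Banach form (Remark~\ref{rem:exponentially_integrable_lattices_Banach}). One small correction: the induced middle space in the paper's induction is always $L_2(\Omega;X_1)$, not a variable $L_p$; the relevant stability hypothesis is precisely closure of $\mathcal E$ under $X\mapsto L_2(\Omega,\mu;X)$, which nontrivial type indeed satisfies.
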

In particular, every lattice in a higher group over nonarchimedean local fields has strong property (T) with respect to every Banach space of nontrivial type.

In the real case the conditions we have to impose on the Banach spaces are a bit longer to state, but we believe that they are equivalent to having nontrivial type. For $n\geq 2$, denote by $\mathbb{S}^n$ the unit sphere in euclidean $\R^{n+1}$ and define a family $(T^{(n)}_\delta)_{\delta \in [-1,1]}$ of operators on $L_2(\mathbb{S}^n)$ by $T_\delta^{(n)} f(x)$ is the average of $f$ on $\{y \in \mathbb{S}^n  | \langle x,y \rangle = \delta\}$.

For $\theta \in \R/2\pi$, denote by $S_\theta$ the operator on $L_2(\mathrm{SU}(2))$ given by
\[ S_\theta f(u) = \int_{0}^{2\pi} f (\frac{1}{\sqrt{2}}\begin{pmatrix} e^{-i\theta} & -e^{i\varphi} \\ e^{-i\varphi} & e^{i\theta}\end{pmatrix} u) \frac{d \varphi}{2\pi}.\]

The following result extends the results of \cite{Salle2015,strongTsp4,dlMdlSAIF}. A version for general higher rank groups is stated as Theorem \ref{thm:main_Banach_general}.
\begin{thm}\label{thm:main_Banach_valued_real} Let $G$ be a connected simple Lie group with Lie algebra $\mathfrak g$ and $\Gamma \subset G$ be a lattice. Then both $G$ and $\Gamma$ have $(*_{\mathcal E})$ (and therefore strong (T) with respect to $\mathcal E$) if one of the following conditions holds:
  \begin{itemize}
  \item $\mathfrak{g}$ contains a Lie subalgebra isomorphic to $\mathfrak{sp}_4$, and there is $\alpha\in (0,1]$ and $C>0$ such that for every $X \in \mathcal E$
    \begin{equation}\label{eq:Stheta} \| S_\theta - S_{\frac{\pi}{4}}\|_{B(L_2(\mathrm{SU}(2);X))} \leq C |\theta - \frac \pi 4|^{\frac \alpha 4} \forall \theta \in [0,2\pi]\end{equation}
    and
    \begin{equation}\label{eq:Tdelta} \| T^{(2)}_\delta - T^{(2)}_0\|_{B(L_2(\mathbb{S}^n;X))} \leq C |\delta|^{\frac \alpha 2} \forall \delta \in [-1,1].\end{equation}
  \item $\mathfrak{g}$ contains a Lie subalgebra isomorphic to $\mathfrak{sl}_{3n-3}$ for $n \geq 2$, and there is $\alpha\in (0,1]$ and $C>0$ such that for every $X \in \mathcal E$,
\begin{equation}\label{eq:Tdeltan} \| T^{(n)}_\delta - T^{(n)}_0\|_{B(L_2(\mathbb{S}^n;X))} \leq C |\delta|^{\frac \alpha 2} \forall \delta \in [-1,1].\end{equation}
  \end{itemize}
\end{thm}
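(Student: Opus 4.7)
My plan is to deduce Theorem~\ref{thm:main_Banach_valued_real} from property~$(*_{\mathcal E})$ for the ambient Lie group $G$ together with the transfer principle (Theorem~\ref{thm:exponentially_integrable_lattices}) quoted in the introduction, which propagates $(*_{\mathcal E})$ from a higher rank group to any of its undistorted lattices. Since higher rank lattices are known to be $L^p$-integrable for every $p<\infty$, and in particular undistorted, the statement for $\Gamma$ reduces at once to the same statement for $G$, so the whole task is to establish $(*_{\mathcal E})$ for the Lie group $G$ itself under each of the two sets of hypotheses on $\mathcal E$.

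I would first reduce to the case where $G$ is itself the honest Lie subgroup with Lie algebra $\mathfrak{sp}_4$ (resp.\ $\mathfrak{sl}_{3n-3}$). Via a standard sandwich argument, if a closed subgroup $H$ of $G$ satisfies~$(*_{\mathcal E})$ and the $H$-double cosets in $G$ are compactly controlled, then $G$ itself does: a two-step representation of $G$ restricts to one of $H$, and the $H$-averaging measure extends to a $G$-averaging measure by convolving with a compactly supported density. The two-step structure is preserved at every step of this reduction because the composition $\pi_1\pi_0$ is used only once.

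For the reduced case I would follow the scheme developed in~\cite{MR2423763,Salle2015,strongTsp4,dlMdlSAIF}. Choose $m_N$ to be $K$-bi-invariant (with $K$ the maximal compact subgroup) and supported on the Cartan shell of radius $\sim N$, so that $m_N = H_K \ast \nu_N \ast H_K$ where $H_K$ is Haar measure on $K$ and $\nu_N$ lives on a Cartan. The two-step identity $\pi(kak') = \pi_1(k)\pi_0(ak')$ then gives
\[ \pi(m_N) = \Bigl(\int_K \pi_1(k)\,dk\Bigr) \cdot \int_K \int_G \pi_0(ak')\,d\nu_N(a)\,dk',\]
so the Cauchy estimate $\|\pi(m_N)-\pi(m_{N+1})\|\le CL^2 e^{-tN}$ reduces to comparing $K$-averages of $\pi_0$ at two nearby Cartan elements. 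A Peter--Weyl decomposition of the $X_1$-valued functions on $K$ involved expresses these differences in terms of the operator norms on $B(L_2(K;X_1))$ of the explicit convolution operators appearing in the Cartan--Iwasawa geometry, namely $T^{(n)}_\delta$ (from $\mathrm{SO}(n)$-orbits on spheres inside $\mathrm{SL}_{3n-3}$) and $S_\theta$ (from the $\mathrm{SU}(2)$-factor of the maximal compact of $\mathrm{Sp}_4$). The hypotheses \eqref{eq:Stheta}, \eqref{eq:Tdelta} and \eqref{eq:Tdeltan} are precisely the Hölder-type bounds that translate into the exponential decay in~$N$ required for \eqref{eq:mn_Cauchy2}, while \eqref{eq:lim_invariant2} follows from $K$-bi-invariance of $m_N$ together with the fact that $K a_N K$ asymptotically absorbs left- and right-multiplication by arbitrary elements of $G$.

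The principal obstacle is to carry this harmonic analysis through in the two-step setting, where one may compose $\pi_1$ and $\pi_0$ only once. The operator-norm bounds on $T^{(n)}_\delta$ and $S_\theta$ live on the intermediate space $X_1 \in \mathcal E$, whereas the outer legs $\pi_0\colon X_0\to X_1$ and $\pi_1\colon X_1\to X_2$ enter only through the growth bound $\|\pi_i(g)\|\le Le^{s\ell(g)}$. The Peter--Weyl reduction can therefore be applied exactly once, on $X_1$, and the bound must retain the correct $L^2$ dependence rather than acquiring two independent factors of $L$; this is exactly why the formalism of two-step representations (rather than plain representations on a single Banach space) is the right setting, and it dictates the precise order in which the inner $K$-averaging and the outer composition with $\pi_0,\pi_1$ are performed.
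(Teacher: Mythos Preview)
Your overall strategy---establish $(*_{\mathcal E})$ for the building blocks $\mathrm{SL}_3$, $\mathrm{Sp}_4$, $\mathrm{SL}_{3n-3}$ using the H\"older bounds \eqref{eq:Stheta}--\eqref{eq:Tdeltan}, propagate to $G$ via the subgroup $H$, then transfer to $\Gamma$---matches the paper's. However, two of your three steps contain genuine gaps.

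First, the reduction from $G$ to the subgroup $H$ does not work by any ``sandwich'' or double-coset argument: $G/H$ is \emph{not} compact (think of $\mathrm{SL}_4(\R)/\mathrm{SL}_3(\R)$), so there is no compactly supported density to convolve with, and the $H$-invariance of the limit $P$ does not automatically upgrade to $G$-invariance. The paper's argument (Section~\ref{sec:all_higher_rank_groups}) is quite different: one picks $a\in H$ regular semisimple, decomposes $\mathfrak g=\bigoplus_\lambda \mathfrak g_\lambda$ into $\mathrm{Ad}(a)$-eigenspaces, and shows (Lemma~\ref{lem:P_invariant_by_expglambda}) that ${}_{\mu\delta_{\exp Y}}P_\nu={}_\mu P_\nu$ for $Y\in\mathfrak g_\lambda$, $\lambda\neq 0$, by exploiting that $a^{-cn}\exp(Y)a^{cn}\to e$ exponentially fast, fast enough to beat the exponential growth of $\pi$.

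Second, and more seriously, your claim that \eqref{eq:lim_invariant2} ``follows from $K$-bi-invariance of $m_N$ together with the fact that $Ka_NK$ asymptotically absorbs left- and right-multiplication'' is where the real difficulty lies, and absorption alone is insufficient. Left-translating $\lambda_K\delta_a\lambda_K$ by $g$ does \emph{not} yield another single $K$-double-coset measure; it gives a mixture, and comparing that mixture back to $\lambda_K\delta_a\lambda_K$ requires exactly the same H\"older estimates as the Cauchy step, but now with an \emph{admissible central} measure $\nu$ on $K$ in place of the Haar measure on one side. This is the content of \eqref{eq:nugHaar_close_to_mg} in Proposition~\ref{prop:local_SL3}, whose proof is substantially more delicate than the Cauchy estimate \eqref{eq:mg_Cauchy} (the paper explicitly notes that ``this second part is always much harder than the first''). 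Your factorization $\pi(m_N)=(\int_K\pi_1)\cdot(\cdots)$ also oversimplifies the key mechanism: the correct bound (Lemma~\ref{lem:properties_of_d}\eqref{item:local_estimate} and Remark~\ref{rem:properties_of_d_Banach}) comes from a bilinear pairing $\langle H(k_1),F(k_2)\rangle$ with $F\in L_2(K;X_1)$ and $H\in L_2(K;X_1)^*$ built from $\pi_0$ and $\pi_1$ simultaneously, not from factoring one leg out.
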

All the conditions \eqref{eq:Stheta}, \eqref{eq:Tdelta} and
\eqref{eq:Tdeltan} imply that $X$ has nontrivial type, and we believe that they are actually all equivalent. However, we only know that the condition when $\mathfrak{g}$ contains $\mathfrak{sp}_4$ is formally stronger that when it contains $\mathfrak{sl}_3$, and the condition \eqref{eq:Tdeltan} becomes formally weaker when $n$ grows. When $X$ is a Hilbert space, both \eqref{eq:Stheta} and \eqref{eq:Tdelta} hold with $\alpha=1$. Therefore, \eqref{eq:Stheta} and \eqref{eq:Tdelta} hold if $X$ is isomorphic to a subspace of an interpolation space $[X_0,X_1]_\alpha$ between a Hilbert space $X_1$ and an arbitrary Banach space $X$, or more generally if $X$ is $\theta$-Hilbertian (with $\theta=\alpha$) in the sense of \cite{MR2732331}. This holds in particular if $X$ is isomorphic a subspace of a superreflexive Banach lattice \cite{MR555306}. This includes for example all reflexive Sobolev spaces or Besov spaces.

Since every real simple Lie algebra of real rank $\geq 2$ contains a Lie subalgebra isomorphic to $\mathfrak{sl}_3$ or $\mathfrak{sp}_4$, the preceding implies that every higher rank lattice has strong (T) with respect to $\theta$-Hilbertian Banach spaces, but the results are more general as they include some non superreflexive spaces, for example those having good enough type and cotype exponents, see \cite{Salle2015}.

We end this introduction with another particular case of the above theorem (see \cite{dlMdlSAIF} for the proof that the assumption in Corollary \ref{cor:TBanachique} implies that \eqref{eq:Tdeltan} holds for $n$ large enough).
\begin{cor}\label{cor:TBanachique} Let $X$ be a Banach space for which there is $\beta < \frac 1 2$ and $C$ such that, for every integer $k$, every subspace of $X$ of dimension $k$ is at Banach-Mazur distance $\leq C k^{\beta}$ from $\ell_2^k$. There is $N_X$ such that every lattice in a connected simple Lie group of real rank $\geq N_X$ has strong property (T) with respect to $X$.
\end{cor}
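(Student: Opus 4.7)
The plan is to reduce this to the second bullet of Theorem~\ref{thm:main_Banach_valued_real} applied to $\mathcal{E}=\{X\}$. By the classification of real simple Lie algebras, for each $n\geq 2$ there is an explicit $N(n)$ such that every real simple Lie algebra of real rank $\geq N(n)$ contains a subalgebra isomorphic to $\mathfrak{sl}_{3n-3}$. It is therefore enough to exhibit, for some $n\geq 2$ depending on $\beta$, constants $\alpha\in(0,1]$ and $C'>0$ such that the spherical inequality \eqref{eq:Tdeltan} holds on $L_2(\mathbb{S}^n;X)$; one then sets $N_X:=N(n)$ and invokes Theorem~\ref{thm:main_Banach_valued_real}.

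The core of the argument is verifying \eqref{eq:Tdeltan} once $n$ is chosen large enough in terms of $\beta$. I would decompose $L_2(\mathbb{S}^n)=\bigoplus_{k\geq 0}\mathcal{H}_k^n$ into spherical harmonics, where $\dim \mathcal{H}_k^n \asymp k^{n-1}$; the scalar operator $T_\delta^{(n)}$ acts on $\mathcal{H}_k^n$ as multiplication by a normalized Gegenbauer value $\lambda_k(\delta)$ satisfying the classical estimate $|\lambda_k(\delta)|\lesssim (1+k|\delta|)^{-(n-1)/2}$, together with the trivial bound $|\lambda_k(\delta)-\lambda_k(0)|\lesssim k|\delta|$ coming from differentiating at $\delta=0$. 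On $X$-valued functions one gets
\[T_\delta^{(n)}-T_0^{(n)} \;=\; \sum_{k\geq 1}\bigl(\lambda_k(\delta)-\lambda_k(0)\bigr)\,(P_k\otimes \mathrm{Id}_X),\]
where $P_k$ is the scalar orthogonal projection onto $\mathcal{H}_k^n$. The hypothesis on $X$ bounds the Banach--Mazur distance of any finite-dimensional subspace of $X$ by $C(\dim)^{\beta}$, and after a truncation argument this transfers into a polynomial bound $\|P_k\otimes \mathrm{Id}_X\|\lesssim k^{2\beta(n-1)}$ on the whole of $L_2(\mathbb{S}^n;X)$. Splitting the series at a cutoff $k_0\asymp|\delta|^{-\sigma}$, using the $k|\delta|$ bound for $k\leq k_0$ and the Gegenbauer decay for $k>k_0$, and optimizing $\sigma$, produces an estimate of the form $C'|\delta|^{\alpha/2}$ as soon as the scalar decay exponent $(n-1)/2$ strictly dominates the subspace-constant growth exponent $2\beta(n-1)$, which happens for some (in fact every sufficiently large) $n$ precisely because $\beta<\tfrac12$. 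This is the calculation carried out in \cite{dlMdlSAIF}.

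The main obstacle is the control of $\|P_k\otimes \mathrm{Id}_X\|$ as an operator on the infinite-dimensional space $L_2(\mathbb{S}^n;X)$, because the hypothesis on $X$ only furnishes information about finite-dimensional subspaces. A truncation/duality reduction to the finite-dimensional subspace of $X$ generated by the relevant vector-valued Fourier coefficients is needed before the Banach--Mazur hypothesis can be legitimately invoked, and it is at this step that the sharp threshold $\beta<\tfrac12$ enters: anything above $\tfrac12$ would fail to beat the Gegenbauer decay uniformly in $k$. This truncation argument is the technical heart of \cite{dlMdlSAIF}, and once granted the rest of the proof is a bookkeeping of exponents and an application of Theorem~\ref{thm:main_Banach_valued_real}.
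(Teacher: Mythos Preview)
Your overall strategy is exactly what the paper does: it simply records that the Banach--Mazur hypothesis implies \eqref{eq:Tdeltan} for all sufficiently large $n$ (citing \cite{dlMdlSAIF}), and then invokes the second bullet of Theorem~\ref{thm:main_Banach_valued_real}. So at the level of reduction there is nothing to add.

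There is, however, a concrete arithmetic slip in your sketch of the \cite{dlMdlSAIF} argument. The inequality you write down, ``$(n-1)/2$ strictly dominates $2\beta(n-1)$'', simplifies to $\beta<\tfrac14$ and is \emph{independent of $n$}; so it cannot be true that ``this happens for some $n$ precisely because $\beta<\tfrac12$''. With the projection bound $\|P_k\otimes\mathrm{Id}_X\|\lesssim k^{2\beta(n-1)}$ and the crude splitting you describe, the tail $\sum_{k>k_0}(k|\delta|)^{-(n-1)/2}k^{2\beta(n-1)}$ only converges when $(n-1)(\tfrac12-2\beta)>1$, which again forces $\beta<\tfrac14$. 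Either the projection estimate is sharper than you state (one power of the Banach--Mazur distance rather than two, or a bound that does not scale like the full dimension of $\mathcal H_k^n$), or the argument in \cite{dlMdlSAIF} does not proceed by summing $|\lambda_k(\delta)-\lambda_k(0)|\cdot\|P_k\otimes\mathrm{Id}_X\|$ term by term. In either case your heuristic does not yet explain the threshold $\beta<\tfrac12$.

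Since the paper itself only cites \cite{dlMdlSAIF} for this implication and does not reproduce the estimate, your deferral to that reference is acceptable; just be aware that the back-of-the-envelope version you wrote does not close, and that the genuine argument requires a more careful use of the finite-dimensional hypothesis than a blockwise bound on $P_k\otimes\mathrm{Id}_X$.
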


Theorem \ref{thm:main}, as well as its Banach space generalizations, is proven in several steps. The first step is to prove the Theorem for the basic building blocks of higher rank groups, namely for $G=\mathrm{SL}_3(\F)$, $\mathrm{Sp}_4(\F)$ for $\F= \R,\Q_p$ or $\mathbf F_p(\!(t)\!)$, or $G=\widetilde{\mathrm{Sp}}_4(\R)$. This is achieved in Section \ref{sec:SL3Sp4}. The second step is to extend this to all higher rank groups in Section \ref{sec:all_higher_rank_groups}. The last step is to deal with lattices in such groups in Section \ref{sec:lattices}. A crucial ingredient is the fact that higher rank lattices are \emph{exponentially integrable}.
\subsection{Acknowledgements} I thank David Fisher and Tim de Laat for many interesting conversations and useful comments. I thank Fran\c{c}ois Maucourant for allowing me to include his proof of Theorem \ref{thm:measure_of_cusps}, much more elementary and general than my initial argument based on the reduction theory of $S$-arithmetic lattices. Even though this argument is no longer present in the final version of this work, I wish to thank Olivier Ta\"ibi and Kevin Wortman for their very patient explanations on the reduction theory of $S$-arithmetic lattices in positive characteristic.
\section{Preliminaries}\label{sec:preliminaries}
\subsection{Notation} 
If $G$ is a locally compact group, we will denote by $\Pc(G)$ the set
of all compactly supported Borel probability measures on $G$. To
lighten the notation, the convolution of probability measures $m_1,m_2
\in \Pc(G)$ will be written as $m_1m_2$. So
\[ \int f d(m_1 m_2) = \iint f(g_1 g_2) dm_1(g_1) dm_2(g_2).\]

We view $\Pc(G)$ as a set of linear forms on the space of continuous functions on $G$, and equip it with the restriction of the weak-* topology.

If $X,X'$ are Banach spaces, a map $\pi \colon G \to B(X,X')$ is called strongly continuous if for every $x \in X$, the map $g \in G \mapsto \pi(g) x \in X'$ is continuous. In that case, for every $m \in \Pc(G)$, we denote by $\pi(m) \in B(X,X')$ the operator $x\mapsto \int \pi(g) x dm(x)$ (Bochner integral). By applying the definitions, we readily obtain the following.
\begin{lemma}\label{lem:strong_continuity_of_pi_on_Pc} If $\pi \colon G \to B(X,X')$ is strongly continuous, then the map $\pi \colon \Pc(G) \to B(X,X')$ is still strongly continuous.
\end{lemma}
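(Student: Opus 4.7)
The plan is to show, directly from the definitions, that whenever a net $m_\alpha \to m$ converges in the weak-$*$ topology on $\Pc(G)$ and $x \in X$, we have $\pi(m_\alpha)x \to \pi(m)x$ in the norm of $X'$. Writing $h(g) = \|\pi(g)x\|$, strong continuity of $\pi$ gives $h \in C(G)$, and so weak-$*$ convergence tested against continuous functions built from $h$ will be the main tool. In particular $\int h\,dm_\alpha \to \int h\,dm$, which already hints that the tails of $m_\alpha$ cannot contribute much to $\pi(m_\alpha)x$, despite the fact that $\|\pi(g)\|$ can be unbounded as $g$ ranges over $G$.

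Fix $\epsilon > 0$. Using Urysohn's lemma I would pick $\chi \in C(G)$, compactly supported, with $0 \le \chi \le 1$ and $\chi \equiv 1$ on a neighborhood of $\mathrm{supp}(m)$. Then $(1-\chi)h \in C(G)$ vanishes on $\mathrm{supp}(m)$, so weak-$*$ convergence forces $\int (1-\chi)h\,dm_\alpha \to 0$; this bounds the ``tail'' term $\|\int (1-\chi(g))\pi(g)x\,dm_\alpha(g)\|$ by $\epsilon$ eventually, while the analogous integral against $m$ is exactly $0$. It remains to control the main term $\int \chi(g)\pi(g)x\,dm_\alpha(g)$.

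On the compact set $K = \mathrm{supp}(\chi)$, continuity of $g \mapsto \pi(g)x$ yields a finite open cover $V_1,\dots,V_N$ of $K$ and points $g_k \in V_k$ with $\|\pi(g)x - \pi(g_k)x\| < \epsilon$ whenever $g \in V_k$. Pick continuous $\psi_k \ge 0$ supported in $V_k$ with $\sum_k \psi_k = \chi$ on $G$ (take a partition of unity subordinate to $\{V_k\}$ near $K$ and multiply each function by $\chi$). Pointwise estimation gives $\|\chi(g)\pi(g)x - \sum_k \psi_k(g)\pi(g_k)x\| \le \epsilon$, so after integration,
\[ \Bigl\|\int \chi(g)\pi(g)x\,dm_\alpha(g) - \sum_k \Bigl(\int \psi_k\,dm_\alpha\Bigr)\pi(g_k)x\Bigr\| \le \epsilon, \]
and similarly with $m$ in place of $m_\alpha$. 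Applying weak-$*$ convergence to each $\psi_k \in C(G)$ shows that the finite sum in the middle converges to its counterpart for $m$. Combining the three estimates yields $\limsup_\alpha \|\pi(m_\alpha)x - \pi(m)x\| \le 3\epsilon$, and letting $\epsilon \to 0$ gives the lemma.

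The one genuine obstacle is the a priori unbounded growth of $\|\pi(g)\|$, which prevents a naive tightness argument in $\Pc(G)$. It is resolved by the observation that the scalar majorant $h = \|\pi(\cdot)x\|$ is itself in $C(G)$, so $(1-\chi)h$ is a legitimate test function for the weak-$*$ topology; this turns the tail control into an automatic consequence of the convergence $m_\alpha \to m$.
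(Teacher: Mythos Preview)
Your argument is correct. The paper does not give a proof of this lemma---it simply asserts that the result follows ``by applying the definitions''---so there is no approach to compare against.

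One remark worth making: every use of this lemma in the paper is for nets of measures supported in a fixed compact set (see the lower-semicontinuity clauses in Lemma~\ref{lem:properties_of_d} and Lemma~\ref{lemma:lsc}), and in that restricted setting your tail estimate is unnecessary; the partition-of-unity argument on the common support already suffices. For the lemma in the generality stated, however, your observation that $(1-\chi)h$ lies in $C(G)$ and is therefore a legitimate test function---the weak-$*$ topology on $\Pc(G)$ being defined by duality with all of $C(G)$, not just $C_c(G)$ or $C_b(G)$---is exactly what is needed, and is the only point requiring thought.
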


We recall the definition of Lafforgue's strong property (T).

Fix a left Haar measure $dg$ on $G$. If $\ell$ is a length function on locally compact group $G$, denote by $\mathcal C_{\ell}(G)$ the Banach algebra obtained by completion of convolution algebra $C_c(G)$ under the norm $\|f\|_\ell= \sup \{ \|\pi(f)\|\}$ where the supremum is over all strongly continuous representations $\pi$ of $G$ on a Hilbert space for which $\|\pi(g)\| \leq e^{\ell(g)}$ for every $g \in G$. As for measures, $\pi(f)$ is here the operator $x \mapsto \int f(g) \pi(g) x dg$.

For example, if $\ell=0$, we obtain $C^*(G)$, the full $C^*$-algebra
of $G$.

\begin{defn}\label{defn:strongT}(Lafforgue) A locally compact group $G$ has strong property (T) if for every length function $\ell$, there exists $s>0$ such that for every $c\geq 0$ the Banach algebra $\mathcal C_{s \ell +c}(G)$ has a \emph{Kazhdan projection}, \emph{i.e.} an idempotent $P$ such that $\pi(P)$ is a projection on the space of invariant vectors for every representation $\pi$ satisfying $\|\pi(g)\| \leq e^{s\ell(g)+c}$ for every $g \in G$
\end{defn}

A justification for this definition is the following well-known
characterization of property (T), which in particular asserts that the
particular case $\ell=0$, $c=0$ in Definition \ref{defn:strongT} is
equivalent to property (T).
\begin{prop}\label{prop:characterizations_T} For a locally compact group $G$, the following are equivalent.
  \begin{enumerate}
    \item $G$ has property (T).
    \item There is a compactly supported probability measure $\mu$ on
    $G$ such that, for every unitary representation $\pi$ of $G$ on a
    Hilbert, $\|\pi(\mu) - P_\pi\| \leq \frac 1 2$, where $P_\pi$ is the
    orthogonal projection on the space of invariant vectors and the
    norm is the operator norm on $G$.
  \item $G$ has a symmetric compact generating set $Q$ and there is a
    sequence $\mu_n$ of probability measures supported in $Q^n$ such
    that, for every unitary representation $\pi$ of $G$ on a Hilbert,
    $\|\pi(\mu_n) - P_\pi\| \leq 2^{-n}$.
  \item for every length function $\ell$ on $G$, there are constants
    $C,s>0$ and a sequence $\mu_n$ of probability measures supported
    in $\{g \in G |\ell(g) \leq n\}$ such that, for every unitary
    representation $\pi$ of $G$ on a Hilbert, $\|\pi(\mu_n) - P_\pi\|
    \leq C e^{-sn}$.
    \item $C^*(G)$ has a Kazhdan projection.
   \end{enumerate}
\end{prop}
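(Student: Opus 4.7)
The plan is to prove the five conditions equivalent via the cyclic chain $(1) \Rightarrow (2) \Rightarrow (3) \Rightarrow (4) \Rightarrow (5) \Rightarrow (1)$. A key elementary identity used throughout is that, since $P_\pi$ is the orthogonal projection onto $\pi(G)$-invariant vectors, one has $\pi(g)P_\pi = P_\pi \pi(g) = P_\pi$ for every $g$; integrating against $\mu \in \Pc(G)$ gives $\pi(\mu) P_\pi = P_\pi \pi(\mu) = P_\pi$, whence by induction
\[ (\pi(\mu) - P_\pi)^n = \pi(\mu^n) - P_\pi \qquad \text{for every } n \geq 1. \]

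For $(1) \Rightarrow (2)$, I would invoke the classical Kazhdan-constant characterization of property (T): there exist a compact symmetric subset $Q \subseteq G$ and $\epsilon > 0$ such that for every unitary representation $\pi$ and every vector $v$,
\[ \max_{g \in Q} \|\pi(g)v - v\| \geq \epsilon \,\|(1-P_\pi)v\|. \]
Taking any symmetric probability measure $\nu$ with support $Q \cup \{e\}$ bounded below by a positive density on $Q$, the operator $\pi(\nu)$ is self-adjoint of norm $\leq 1$, and the Kazhdan inequality converts (by a standard computation on $(1-P_\pi)\mathcal H$) into a uniform bound $\|\pi(\nu) - P_\pi\| \leq 1 - \delta$ for some $\delta > 0$ independent of $\pi$. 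The identity above then yields $\|\pi(\nu^N) - P_\pi\| \leq (1-\delta)^N$, so $\mu := \nu^N$ for $N$ large satisfies (2).

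The implications $(2) \Rightarrow (3) \Rightarrow (4) \Rightarrow (5)$ are then routine. First, (2) already implies property (T): for any unitary $\pi$ and unit vector $v$ that is $\delta$-almost invariant with respect to $\mathrm{supp}(\mu)$, one has $\|\pi(\mu)v - v\| \leq \delta$, and combined with $\|\pi(\mu) - P_\pi\| \leq 1/2$ this gives $\|P_\pi v - v\| \leq 1/2 + \delta$, hence $P_\pi v \neq 0$ for $\delta < 1/2$. Since property (T) forces compact generation, one may choose a compact symmetric generating set $Q$ containing $\mathrm{supp}(\mu)$; setting $\mu_n := \mu^n$ yields (3) via the identity. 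Given any length function $\ell$, the bound $\ell \leq c$ on the compact set $Q$ gives $Q^n \subseteq \{g : \ell(g) \leq cn\}$ and hence (4) after rescaling the index. For $(4) \Rightarrow (5)$, the uniform-in-$\pi$ estimate $\|\pi(\mu_n) - \pi(\mu_m)\| \leq 2Ce^{-s\min(n,m)}$ shows that $(\mu_n)$ is Cauchy in $C^*(G)$; its limit $P$ lies in $C^*(G)$, satisfies $\pi(P) = P_\pi$ for every $\pi$, and is idempotent by passing to the limit in $(\pi(\mu_n) - P_\pi)^2 = \pi(\mu_n^2) - P_\pi$.

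The step I expect to require most care is $(5) \Rightarrow (1)$. Given a Kazhdan projection $P \in C^*(G)$, approximate it in $C^*$-norm by $f \in C_c(G)$ with $\|P - f\|_{C^*(G)} < \epsilon$; applying the trivial representation yields $|\int_G f\, dg - 1| < \epsilon$. For any unitary $\pi$ and unit vector $v$ that is $\delta$-almost invariant with respect to $\mathrm{supp}(f)$, the triangle inequality gives
\[ \|P_\pi v - v\| \leq \|\pi(P) - \pi(f)\| + \|f\|_1 \delta + \Bigl| \int_G f\, dg - 1\Bigr| \leq 2\epsilon + \|f\|_1 \delta. \]
Choosing $\epsilon,\delta$ small enough makes this strictly less than $1$, so $P_\pi v \neq 0$ and $\pi$ has a non-zero invariant vector. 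This implication is the delicate one because it is the only place that must bridge the purely $C^*$-algebraic datum of a norm approximation to $P$ with the concrete analytic notion of almost invariant vectors, and it crucially uses that the norm on $C^*(G)$ is the supremum over all unitary representations.
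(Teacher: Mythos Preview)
The paper does not actually give a proof of this proposition: it is introduced as ``the following well-known characterization of property~(T)'' and left without argument, with the subsequent Remark~\ref{rem:spectral_gap} pointing to \cite{dlSlocalKazhdan} for the representation-by-representation version. So there is no ``paper's own proof'' to compare against; your cyclic argument $(1)\Rightarrow(2)\Rightarrow(3)\Rightarrow(4)\Rightarrow(5)\Rightarrow(1)$ is a perfectly standard and correct way to establish the result.

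Two small technical points worth tightening. In $(1)\Rightarrow(2)$, the passage from the Kazhdan inequality $\max_{g\in Q}\|\pi(g)v-v\|\geq\epsilon\|(1-P_\pi)v\|$ to a uniform bound $\|\pi(\nu)-P_\pi\|\leq 1-\delta$ is genuinely ``standard'' but not entirely one line when $G$ is non-discrete: knowing only the maximum over $Q$ does not immediately bound $\int_Q\|\pi(g)v-v\|^2\,d\nu(g)$ from below uniformly in $\pi$ and $v$. The clean way is to argue by contradiction via a net of almost-invariant vectors, or to replace $\nu$ by $\nu*\check\nu$ to force positivity and then use that $\|\pi(\nu)^*v\|$ close to $1$ forces $v$ to be almost invariant. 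In $(4)\Rightarrow(5)$, your measures $\mu_n$ are a priori only in $\Pc(G)$, not in the completion of $C_c(G)$; to place the limit in $C^*(G)$ you should first convolve each $\mu_n$ with a fixed probability density $\phi\in C_c(G)$, which does not disturb the estimate since $P_\pi\pi(\phi)=P_\pi$. Neither issue is a real gap, and the overall scheme---especially the key identity $(\pi(\mu)-P_\pi)^n=\pi(\mu^n)-P_\pi$ driving the powers argument---is exactly the right one.
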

\begin{rem}\label{rem:spectral_gap}
Actually this proposition holds \emph{representation-by-representation}:
given a unitary representation $\pi$ of a locally compact group $G$,
the following are equivalent:
\begin{itemize}
  \item $\pi$ has spectral gap in the sense that the orthogonal of the
    space of invariant vectors does not carry almost invariant
    vectors.
  \item there is a compactly supported probability measure $\mu$ on
    $G$ such that $\|\pi(\mu) - P_\pi\| \leq \frac 1 2$.
  \item there is a symmetric compact subset $Q \subset G$, and a
    sequence of probability measures $\mu$ on $Q^n$ such that
    $\|\pi(\mu_n) - P_\pi\| \leq 2^{-n}$.
  \item for every length function $\ell$
    on $G$, there are constants $C,s>0$ and a sequence $\mu_n$ of
    probability measures supported in $\{g \in G |\ell(g) \leq n\}$
    such that $\|\pi(\mu_n) - P_\pi\| \leq C e^{-sn}$.
\end{itemize}
If one defines correctly a Kazhdan projection for arbitrary
Banach-algebra completions of $C_c(G)$ (see \cite{{dlSlocalKazhdan}}),
these definitions are in turn equivalent to the existence of a Kazhdan
projection for the completion of $C_c(G)$ for the norm $\|f\| =
\|\pi(f)\|$.
\end{rem}

If $\mathcal E$ is a class of Banach spaces, one can denote similarly
by $\mathcal C_{\ell,\mathcal E}(G)$ the Banach algebra obtained by
completion of $C_c(G)$ under the norm $\|f\|_{\ell,\mathcal E}= \sup
\{ \|\pi(f)\|\}$ where the supremum is over all strongly continuous
representations $\pi$ of $G$ on a Banach space in $\mathcal E$ for
which $\|\pi(g)\| \leq e^{\ell(g)}$ for every $g \in G$, and define
Banach strong property (T) with respect to $\mathcal E$ as strong
property (T) by replacing $\mathcal C_{s \ell +c}(G)$ by $\mathcal
C_{s \ell +c,\mathcal E}(G)$.

Recall that a Banach space $X$ has nontrivial Rademacher type (or simply nontrivial type) if there exists $p>1$ and a real number $T$ such that
\begin{equation}\label{eq:typep} \left(\mathbb E \| \sum_i \varepsilon_i x_i\|^p\right)^{\frac 1 p} \leq T \left(\sum_i \|x_i\|^p\right)^{\frac 1 p}\end{equation}
for every finite sequence $x_i$ in $X$, where $\varepsilon_i$ are iid random variables uniformly distributed in $\{-1,1\}$. This is equivalent to the fact that $\ell^1$ is not finitely representable in $X$: there is $N>0$ and $c>1$ such that every linear map $u$ between $\ell^1_N$ and every $N$-dimensional subspace of $X$ satisfies $\|u\| \|u^{-1}\| \geq c$.

More generally a class of Banach spaces $\mathcal E$ has nontrivial type if there exists $p>1$ and $T<\infty$ such that \eqref{eq:typep} holds for every $X \in \mathcal E$ and every finite sequence $(x_i)$ in $X$, or equivalently if $\ell^1$ is not finitely representable in $\mathcal E$.

\subsection{Why the naive attempt does not work}\label{subsection:naive}
We now explain why the classical notion of induction of representations, that we first recall, is not well-suited to induce Strong (T) to non-cocompact lattices.

Let $\Gamma$ be a lattice in a locally compact group $G$. Let $\Omega$ be a Borel fundamental domain for $G/\Gamma$: $\Omega$ is a subset of $\Omega$, belonging to the Borel $\sigma$-algebra, and such that $(\omega,\gamma) \in \Omega \times \Gamma \mapsto \omega \gamma \in G$ is a bijection.

Let $\pi$ be a representation of $\Gamma$ on a Hilbert or Banach space $X$. Consider the topological vector space $\widetilde X$ of (Bochner-measurable) functions $f\colon G \to X$ satisfying $f(g \gamma) = \pi(\gamma)^{-1} f(g)$, moded out by functions that vanish outside of a negligeable set. Make $G$ act on this space by left translation: $\widetilde \pi(g) f(h) = f(g^{-1}h)$.

It is natural to consider the Hilbert space of such functions satisfying moreover
\[ (\int_\Omega \|f(\omega)\|_X^2 d\omega)^{\frac 1 2}<\infty.\]
This space is naturally identified with $L_2(\Omega;X)$. Under this identification, if $g \omega=(g \cdot \omega) \alpha(g,\omega)$ is the unique decomposition of $g\omega$ in $G=\Omega \Gamma$, then $\widetilde \pi(g)$ reads as
\[ (\widetilde \pi(g) f)(\omega) = \pi(\alpha(g^{-1},\omega)^{-1}) f(g^{-1} \cdot \omega) \ \forall g \in G, \omega \in \Omega.\]
The problem that occurs is that $\widetilde \pi(g)$ preserves $L_2(\Omega;X)$ if and only if the function $\omega \mapsto \|\pi(\alpha(g^{-1},\omega)^{-1}\|$ is essentially bounded on $\Omega$:
\begin{lemma} The norm of $\widetilde \pi(g)$ on $L_2(\Omega;X)$ is equal to the essential supremum of $\omega \mapsto \|\pi(\alpha(g^{-1},\omega)^{-1})\|$.
\end{lemma}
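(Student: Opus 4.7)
The claim is a two-sided estimate, so I will prove the inequalities $\leq$ and $\geq$ separately. Write $M = \mathrm{ess\,sup}_{\omega \in \Omega} \|\pi(\alpha(g^{-1},\omega)^{-1})\|$, and recall that the transformation $\omega \mapsto g^{-1}\cdot \omega$ on $\Omega$ corresponds under the identification $\Omega \cong G/\Gamma$ to left multiplication by $g^{-1}$, so it preserves the (normalized) measure $d\omega$ on $\Omega$.

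For the upper bound, I would just substitute the explicit formula for $\widetilde\pi(g)$ into the $L_2$-norm, pull the operator norm of $\pi(\alpha(g^{-1},\omega)^{-1})$ out pointwise, bound it by $M$, and then undo the change of variables $\omega' = g^{-1}\cdot\omega$ using measure preservation:
\[ \|\widetilde\pi(g)f\|_{L_2(\Omega;X)}^2 \;\leq\; \int_\Omega \|\pi(\alpha(g^{-1},\omega)^{-1})\|^2 \,\|f(g^{-1}\cdot\omega)\|_X^2\,d\omega \;\leq\; M^2 \|f\|_{L_2(\Omega;X)}^2. \]

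For the reverse inequality, I would fix $\varepsilon>0$ and consider the positive-measure set $A_\varepsilon = \{\omega \in \Omega : \|\pi(\alpha(g^{-1},\omega)^{-1})\| > M - \varepsilon\}$. A standard measurable-selection argument (which is legitimate because $\omega \mapsto \pi(\alpha(g^{-1},\omega)^{-1})$ is strongly measurable into $B(X)$, and the spaces involved are taken separable as everywhere in this article) produces a Bochner-measurable map $\omega \mapsto y_\omega \in X$ with $\|y_\omega\|=1$ and $\|\pi(\alpha(g^{-1},\omega)^{-1})y_\omega\|_X > M - 2\varepsilon$ on $A_\varepsilon$. Setting $B_\varepsilon = g^{-1}\cdot A_\varepsilon$ (same measure as $A_\varepsilon$) and
\[ f(\omega') = \frac{1}{\sqrt{|A_\varepsilon|}}\,y_{g\cdot\omega'}\,\mathbf{1}_{B_\varepsilon}(\omega'), \]
one gets a unit vector of $L_2(\Omega;X)$ whose image under $\widetilde\pi(g)$ is $\omega \mapsto |A_\varepsilon|^{-1/2}\pi(\alpha(g^{-1},\omega)^{-1})y_\omega\mathbf{1}_{A_\varepsilon}(\omega)$, and a direct computation yields $\|\widetilde\pi(g)f\|_{L_2(\Omega;X)} > M - 2\varepsilon$. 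Letting $\varepsilon \to 0$ finishes the proof.

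The only point requiring any care is the measurable selection $\omega \mapsto y_\omega$; everything else is just the explicit formula for $\widetilde\pi(g)$ combined with the $G$-invariance of the quotient measure. I do not anticipate a real obstacle here, so the role of the lemma is mostly expository: it pinpoints exactly the obstruction to the naive induction, namely that unless the cocycle $\alpha(g^{-1},\cdot)$ takes values in a bounded subset of $\Gamma$ (which fails whenever $\Gamma$ is not cocompact and $\pi$ is not uniformly bounded), $\widetilde\pi$ does not even land in bounded operators on $L_2(\Omega;X)$.
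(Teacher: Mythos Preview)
Your argument is correct. The upper bound is identical to the paper's. For the lower bound the paper takes a more elementary route that avoids measurable selection: since $\Gamma$ is discrete (hence countable), the map $\omega \mapsto \alpha(g^{-1},\omega)^{-1}$ takes only countably many values, so the essential supremum $M$ equals the supremum of $\|\pi(\gamma)\|$ over those $\gamma \in \Gamma$ for which $A=\{\omega:\alpha(g^{-1},\omega)^{-1}=\gamma\}=\Omega\cap g\Omega\gamma^{-1}$ has positive measure. For each such $\gamma$ and any $x \in X$, the test function $f=\chi_A x$ is sent to $\chi_{gA}\pi(\gamma)x$, giving $\|\widetilde\pi(g)\|\geq\|\pi(\gamma)\|$ directly. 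Your approach is more general in spirit (it would work even if the cocycle took uncountably many values), but the paper's trades that generality for elementarity: no selection theorem is needed, and one sees explicitly that the obstruction is governed by the individual operator norms $\|\pi(\gamma)\|$.
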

\begin{proof}
Let $C_g$ be the essential supremum of $\|\pi(\alpha(g^{-1},\omega)^{-1})\|$.
  The inequality $\|(\widetilde \pi(g) f)(\omega)\| \leq \|\pi(\alpha(g^{-1},\omega)^{-1})\| \|f(g^{-1} \cdot \omega)\|$ implies that 
  \[ \|(\widetilde \pi(g) f)(x)\|_{L_2(\Omega;X)}^2 \leq C_g^2 \int_\Omega \|f(g^{-1} \cdot \omega)\|^2 d\omega =C_g^2 \|f\|_{L_2(\Omega;X)}^2\]
  because $\omega \mapsto g^{-1} \cdot \omega$ preserves the measure on $\Omega$.

  For the other direction, for $\gamma \in \Gamma$, denote $A=\{\omega \in \Omega| \alpha(g^{-1},\omega)^{-1}=\gamma\} = \Omega \cap g \Omega \gamma^{-1}$. If $A$ has positive measure, then for every $x \in X$ we can consider $f = \chi_A x$. It has norm $|A|^{\frac 1 2} \|x\|$, and its image $\chi_{g A} \pi(\gamma) x$ has norm $|A|^{\frac 1 2} \|\pi(\gamma) x\|$. Taking the supremum over $x$ yields the inequality $\|\widetilde \pi(g)\| \geq \|\pi(\gamma)\|$. Taking the supremum over all $g$ such that $\Omega \cap g \Omega \gamma^{-1}$ has positive measure prove that $\|\widetilde \pi(g)\|$ is larger than or equal to $C_g$.
\end{proof}
So in general $\widetilde \pi$ is not a representation by bounded operators unless $\Gamma$ is cocompact or $\pi$ is a uniformly bounded representation. There does not seem to be any other reasonable pseudo-norm on $\widetilde X$ for which $\widetilde \pi(g)$ is by bounded operators. There is always the pseudo norm $\|f\|=\infty$ $\forall f \neq 0$, but this is clearly unreasonable. We do not give a precise meaning to ``reasonable'', but it should at least remember the whole representation, for example by giving finite norm, for every $x \in X$, to the constant function equal to $x$ on $\Omega$. 

We mention however the construction in \cite{MR3621428} where a pseudo-norm is constructed on $\widetilde X$, which, under the assumption that the bounded cohomology $H^1_b(\Gamma;\pi)$ is non zero, gives rise to a nonzero space for which $H^1_b(G;\widetilde X)$ is also non zero. 

\subsection{Comparing Theorem \ref{thm:strong_T_lattices} and \ref{thm:main}}
We recall that $\mathcal C_{\ell,\mathcal E}(G)$ has a Kazhdan projection if and only if there is a sequence $m_n$ of signed\footnote{It is not known in general if $m_n$ can be taken to be positive.} compactly supported measures on $G$ with $\int 1 dm_n=1$ and $C>0$ such that $\|m_n - m_{n+1}\|_{\ell,\mathcal E} \leq C e^{-n}$ and such that $\lim_n\| g m_n -m_n\|_{\ell,\mathcal E} = 0$ for every $g \in G$. Moreover, $m_n$ can be taken to be of the form $(m_1)^{n}$ (the $n$-th convolution power of $m_1$). In particular $m_n$ is supported in $\{g|\ell(g) \leq nR\}$ if $m_1$ is supported in $\{g |\ell(g) \leq R\}$. Also, if $\mathcal E$ is stable by duality and subspaces, then the preceding implies that $\lim_n\| m_ng -m_n\|_{\ell,\mathcal E} = 0$ for every $g \in G$. For details, we refer to \cite{dlSlocalKazhdan} where these assertions were established. 

Hence in the particular case when $X_0=X_1=X_2=X$ and $\pi$ is a representation on $X$, property (*) for $(G,\ell)$ says a bit more than that $\mathcal C_{s\ell +c}(G)$ has a Kazhdan projection for every $c>0$~: first it says that $m_n$ can be taken independant from $c$, that $C = O(e^{2c})$ and most importantly that $m_n$ can be taken to be positive.

\subsection{Basic properties}
The first basic lemma implies that to prove Theorem \ref{thm:main}, it is enough to consider the word-length function with respect to some compact symmetric generating set (which exists because (*) is formally stronger than property (T), which already implies compact generation \cite{MR2415834}), or any other length function quasi-isometric to it. Indeed, if $\ell$ is any length function on a locally compact compactly generated group $G$, and $Q$ is a compact symmetric generating set for $G$ with associated length function $\ell_Q$, then there is $a>0$ such that $\ell \leq a\ell_Q$. Namely the supremum of $\ell$ on $Q$.
\begin{lemma}\label{lem:changing_length} Let $\ell,\ell'$ be two length functions, and $a,b >0$ such that $\ell' \leq a\ell +b$. If $(G,\ell)$ has $(*_{\mathcal E})$ then so does $(G,\ell')$.
\end{lemma}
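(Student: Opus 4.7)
The plan is a direct reindexing of the sequence of measures $(m_n)$ given by $(*_{\mathcal E})$ for $(G,\ell)$, together with a rescaling of the decay parameter. Let $s,t,C>0$ and positive probability measures $m_n$ supported in $\{\ell\leq n\}$ witness property $(*_{\mathcal E})$ for $(G,\ell)$. I would show that $(G,\ell')$ has $(*_{\mathcal E})$ with parameters $s':=s/a$, $t':=t/a$, and a constant $C'$ to be determined.

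First I would translate the growth hypothesis. If $(X_0,X_1,X_2,\pi_0,\pi_1)$ is a two-step representation with $X_1\in \mathcal E$ and $\|\pi_i(g)\|\leq L e^{s'\ell'(g)}$ for $i=0,1$, then $\ell'\leq a\ell+b$ gives
\[ \|\pi_i(g)\| \leq L e^{s'b}\, e^{s'a\,\ell(g)} = L''\, e^{s\,\ell(g)}, \qquad L'' := L e^{sb/a},\]
so the hypothesis of $(*_{\mathcal E})$ for $(G,\ell)$ is satisfied with $L$ replaced by $L''$.

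Next I would reindex. Set $k(n):=\lfloor (n-b)/a\rfloor$ and, for $n$ large enough that $k(n)\geq 0$, define $m'_n:=m_{k(n)}$; for the finitely many remaining small $n$, take $m'_n$ to be any compactly supported positive probability measure and absorb the corresponding finitely many terms into $C'$ at the end. Each $m'_n$ is a positive probability measure, and its support is correct: if $\ell(g)\leq k(n)$ then $\ell'(g)\leq ak(n)+b\leq n$. Applying $(*_{\mathcal E})$ for $(G,\ell)$ produces a single $P\in B(X_0,X_2)$ with
\[ \|\pi(m'_n)-P\| \leq C(L'')^2 e^{-tk(n)} \leq C L^2 e^{2sb/a + t(1+b/a)} e^{-t'n},\]
using $k(n)\geq (n-b)/a-1$, which yields \eqref{eq:mn_Cauchy2} with the desired $C'$. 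The invariance \eqref{eq:lim_invariant2} is inherited along the subsequence $k(n)\to\infty$, so it transfers without further work.

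The lemma is therefore pure bookkeeping and I do not expect a real obstacle. The only conceptual point to keep an eye on is that the asymmetric comparison $\ell'\leq a\ell+b$ points in exactly the direction that allows the support of $m_{k(n)}$ to land inside $\{\ell'\leq n\}$ and the exponential growth bound relative to $\ell'$ to upgrade to one relative to $\ell$; the reverse inequality would break both steps, consistently with the fact that $(*_{\mathcal E})$ for a larger length is formally stronger.
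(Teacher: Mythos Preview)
Your proof is correct and follows exactly the same route as the paper: you take $s'=s/a$, $t'=t/a$, reindex $m'_n=m_{\lfloor (n-b)/a\rfloor}$, and your constant $C'=C e^{2sb/a+t(1+b/a)}=C e^{(2sb+ta+tb)/a}$ matches the paper's. The only additions are your explicit verification of the support condition and the handling of small $n$, both of which the paper leaves implicit.
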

\begin{proof} If $(G,\ell)$ has $(*_{\mathcal E})$ with $s,t,C$ and $m_n$, it is immediate that $(G,\ell')$ has $(*_{\mathcal E})$ with $\frac{s}{a},t/a,C'$ and $m_{\lfloor (n-b)/a\rfloor}$, with $C'=C e^{(2sb+ta+tb)/a}$.
\end{proof}

In each section of the paper, the proof of (*) or $(*_{\mathcal E})$ is divided in two parts: one first finds a sequence $m_n$ such that, if $s>0$ is small enough and $\pi$ is as in (*), then $\pi(m_n)$ converges as in \eqref{eq:mn_Cauchy2}. Then one proves that \eqref{eq:lim_invariant2} also holds. This second part is always much harder than the first. The next remark shows that it is not necessary to prove the norm convergence in \eqref{eq:lim_invariant2}.

\begin{rem}\label{rem:improvement_invariance} In (*), condition \eqref{eq:lim_invariant2} can be strengthened (or weakened). Indeed, once one knows that \eqref{eq:mn_Cauchy2} holds for every $\pi$ as in (*), then for any $\mu_1,\mu_2 \in \Pc(G)$ one can apply it to the new $\pi'$ given by $\pi'(m) = \pi(\mu_1 m \mu_2)$. Indeed, this $\pi'$ satisfies the same assumptions, but with $L$ replaced by $L e^{\frac{s}{2}(R_1+R_2)}$ if the support of $\mu_i$ is contained in $\{g| \ell(g) \leq R_i\}$. And so there is  ${}_{\mu_1}P_{\mu_2} \in B(X_0,X_2)$ such that for every $n$,
\begin{equation} \|\pi(\mu_1 m_n \mu_2) - {}_{\mu_1}P_{\mu_2} \| \leq CL^2 e^{s(R_1+R_2)-tn}.\end{equation}
And so \eqref{eq:lim_invariant2} is equivalent to each of the following properties:
\begin{itemize} \item ${}_{\delta_{g}}P_{\delta_{g'}} = P$.
\item $\|\pi(gm_ng') - \pi(m_n)\| \leq 2CL^2e^{s(\ell(g)+\ell(g'))-tn}$.
\item for every $x \in X_0$, $\lim_n \|\pi(gm_ng')x - \pi(m_n)x \|=0$.
\item for every $x \in X_0$, $\lim_n \pi(gm_ng')x - \pi(m_n)x =0$ weakly.
\end{itemize}
\end{rem}

\begin{lemma}\label{lem:direct_products} If $(G_1,\ell_1)$ and $(G_2,\ell_2)$ have (*) (respectively $(*_{\mathcal E})$) then so does $(G_1\times G_2,\ell)$ where $\ell(g_1,g_2) = \max(\ell_1(g_1),\ell_2(g_2))$.
\end{lemma}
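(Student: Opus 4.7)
The plan is to set $m_n := m_n^{(1)} m_n^{(2)}$, which since $G_1$ and $G_2$ commute is just the product measure, positive and supported in $\{g \in G : \ell(g) \leq n\}$. Writing $(s_i, t_i, C_i)$ for the parameters witnessing $(*)$ for $(G_i, \ell_i)$, I choose the growth rate $s$ for $(G, \ell)$ small enough that $s \leq \min(s_1, s_2)$ and $2s < \min(t_1, t_2)$. The first inequality ensures that any two-step representation of $G$ bounded by $L e^{s \ell(g)}$, when restricted to either $G_i$, satisfies the hypothesis of $(*)$ for $(G_i, \ell_i)$; the second provides the slack needed below.

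The commutation of $G_1$ and $G_2$, together with the two-step relation, yields the factorization $\pi(\nu^{(1)} \nu^{(2)}) = \pi_1(\nu^{(1)}) \pi_0(\nu^{(2)})$ for $\nu^{(i)} \in \Pc(G_i)$, and the following twist construction: for $\nu^L, \nu^R \in \Pc(G)$ supported in $\{\ell \leq R\}$, the maps $\widetilde\pi_0(g) := \pi_0(g \nu^R)$ and $\widetilde\pi_1(g) := \pi_1(\nu^L g)$ form a two-step representation of $G_i$ with $\|\widetilde\pi_j(g)\| \leq L e^{sR} e^{s \ell_i(g)}$. Applying $(*)$ for $(G_i, \ell_i)$ to $\widetilde\pi$ and invoking Remark~\ref{rem:improvement_invariance} produces an operator $P^{\nu^L, \nu^R}_i \in B(X_0, X_2)$ such that
\[\|\pi(\nu^L m_n^{(i)} \nu^R) - P^{\nu^L, \nu^R}_i\| \leq C_i L^2 e^{2sR - t_i n},\]
with an analogous quantitative invariance bound when one further inserts $G_i$-elements on either side of $m_n^{(i)}$.

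For~\eqref{eq:mn_Cauchy2}, I show $\pi(m_n)$ is Cauchy by telescoping $\pi(m_n) - \pi(m_{n+1})$ through the intermediate $\pi(m_n^{(1)} m_{n+1}^{(2)})$: the displayed inequality with $i=2$, $\nu^L = m_n^{(1)}$, $\nu^R = \delta_e$ (twisting radius $\leq n$) controls the first bracket, and with $i=1$, $\nu^L = \delta_e$, $\nu^R = m_{n+1}^{(2)}$ (twisting radius $\leq n+1$) controls the second. Each bracket is $O(L^2 e^{-(t_i - 2s) n})$; summing gives $\pi(m_n) \to P$ with $\|\pi(m_n) - P\| \leq C L^2 e^{-tn}$ for $t := \min(t_1, t_2) - 2s > 0$. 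For~\eqref{eq:lim_invariant2}, decomposing $g = g_1 g_2$, $g' = g_1' g_2'$ in $G_1 \times G_2$ and using the factorization gives $\pi(g m_n g') = \pi_1(g_1 m_n^{(1)} g_1') \pi_0(g_2 m_n^{(2)} g_2')$; comparing with $\pi(m_n) = \pi_1(m_n^{(1)}) \pi_0(m_n^{(2)})$ through the intermediate $\pi_1(m_n^{(1)}) \pi_0(g_2 m_n^{(2)} g_2')$ splits the difference into two integrals, over $m_n^{(2)}$ and $m_n^{(1)}$ respectively, whose integrands the quantitative invariance of the twisted $(*)$ estimates again by a power of $e^{-(t_j - 2s) n}$. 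The main, and essentially only, technical obstacle is the bookkeeping: one must verify that the twist genuinely defines a two-step representation of $G_i$, and absorb the growth $e^{2sR}$ (with $R$ as large as $n$) into the decay $e^{-t_i n}$ of $(*)$, which is precisely why the quantitative slack $2s < \min(t_1, t_2)$ is imposed.
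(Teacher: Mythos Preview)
Your proof is correct and follows essentially the same approach as the paper: define $m_n$ as the product measure, telescope $\pi(m_n)-\pi(m_{n+1})$ through $\pi(m_n^{(1)} m_{n+1}^{(2)})$, and control each piece by applying $(*)$ for one factor to a twisted two-step representation obtained by absorbing the other factor's measure into $\pi_0$ or $\pi_1$. The paper phrases the twist less explicitly (``apply \eqref{eq:mn_Cauchy2} to the map $g_2\mapsto\pi(m_n^{(1)}\otimes\delta_{g_2})$'') and dispatches \eqref{eq:lim_invariant2} with a one-line reference to Remark~\ref{rem:improvement_invariance}, while you unpack the invariance via the factorization $\pi(gm_ng')=\pi_1(g_1 m_n^{(1)} g_1')\pi_0(g_2 m_n^{(2)} g_2')$; but the content and the parameter constraint $2s<\min(t_1,t_2)$ (the paper takes $s=\min(t_1/3,t_2/3,s_1,s_2)$, which satisfies this) are the same.
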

\begin{proof} For $i=1,2$, let $s_i,t_i,C_i,m_n^{(i)}$ be as in $(*_{\mathcal E})$ for $G_i$. Define $m_n = m_n^{(1)} \otimes m_n^{(2)}$. By definition it is a probability measure supported in $\{g \in G_1 \times G_2 | \ell(g)  \leq n\}$.

  Let $\pi \colon G_1 \times G_2 \to B(X_0,X_2)$ be as in $(*_{\mathcal E})$ for $C,s$. We claim that the conclusion of $(*_{\mathcal E})$ holds if $s>0$ is small enough.

  We can compute
  \begin{multline*} \| \pi(m_n) - \pi(m_{n+1})\| \leq \| \pi(m_n^{(1)} \otimes m_n^{(2)} ) - \pi( m_n^{(1)} \otimes m_{n+1}^{(2)}) \| \\+ \| \pi(m_n^{(1)} \otimes m_{n+1}^{(2)} ) - \pi( m_{n+1}^{(1)} \otimes m_{n+1}^{(2)}) \|.\end{multline*}
  By \eqref{eq:mn_Cauchy2} applied to the map $g_2 \in G_2 \mapsto \pi(m_n^{(1)} \otimes \delta_{g_2})$, if $s \leq s_2$ the first term is dominated by $2 C_2 L^2 e^{2sn-t_2 n}$. Similarly, if $s \leq s_1$ the second term is dominated by $2 C_1 L^2 e^{2s(n+1)-t_1 n}$. So if $s = \min(\frac{t_1}{3},\frac{t_2}{3},s_1,s_2)$ then 
  \[ \| \pi(m_n) - \pi(m_{n+1})\| \leq (2C_1e^{2s}+2C_2) L^2 e^{-sn}.\]
  This implies that $\pi(m_n)$ is Cauchy and that  \eqref{eq:mn_Cauchy2} holds with $t=s$ and $C= \frac{2C_1e^{2s}+2C_2}{1-e^{-s}}$. The validity of \eqref{eq:lim_invariant2} follows with a similar proof, taking into account Remark \ref{rem:improvement_invariance}.
\end{proof}

\section{Proof of Theorem \ref{thm:main} for $\mathrm{SL}_3$ and $\mathrm{Sp}_4$}\label{sec:SL3Sp4}

The aim of this section is to prove Theorem \ref{thm:main}, Theorem \ref{thm:main_Banach_valued_nonarch} and Theorem \ref{thm:main_Banach_valued_real} for $\mathrm{SL}_3$, $\mathrm{Sp}_4$ and $\widetilde{\mathrm{Sp}}_4(\R)$, and Theorem \ref{thm:main_Banach_valued_real} for $\mathrm{SL}_{3n-3}$. As we shall see, the proofs use the same two main ingredients as the proofs of strong property (T)~: one is harmonic analysis in the maximal compact subgroups, and the other is a careful exploration process of the Weyl chambers using some elementary moves coming from the maximal compact subgroup. These ingredients are the same, but they are combined in a different way. We will give a complete and essentially self-contained proof for $\mathrm{SL}_3$ and be much more sketchy for the other groups. This allows us to divide the length of the paper by a factor of at least $2$, and we believe that the interested reader will be able to fill the details. The proof for $\mathrm{SL}_3(\F)$ is essentially independant from the local field, but for a better readability we have chosen to first focus on the real case, and then explain the small changes that one has to make to deal with non-archimedean local fields.

\subsection{Case of $\mathrm{SL}_3(\R)$}\label{subsection:SL3R}
We prove the theorem for $G=\mathrm{SL}_3(\R)$. We denote by $K=\mathrm{SO}(3) \subset G$ the maximal compact subgroup. By Lemma \ref{lem:changing_length} it is enough to prove the theorem for the length function $\ell(g) = \max(\log \|g\|,\log \|g^{-1}\|)$, where $\|\cdot\|$ is the norm induced from the natural $K$-invariant euclidean norm on $\R^3$:
\[\|(s_1,s_2,s_3)\| = \left(s_1^2+s_2^2+s_3^2\right)^{\frac 1 2}.\]
More precisely, we will prove that $(\mathrm{SL}_3(\R),\ell)$ has (*) with the parameters $s<\frac 1 4, t=\frac 1 2 -2s,C=\frac{100}{1-4s}$ and $m_n$ any $K$-biinvariant probability measure on $\{g|n-1 \leq \ell(g) \leq n\}$.

Let $\pi$ as in Theorem \ref{thm:main} with $s< \frac 1 4$. Denote by $d$ the distance on the compactly supported Borel probability measures on $G$ defined by \[d(m,m') = \|\pi(m)-\pi(m')\|_{B(X_0,X_2)}.\] The following lemma lists the properties of $d$. In (\ref{item:local_estimate}) and in the rest of the proof, $\lambda$ stands for the left regular representation of $K$. It is the representation on $L_2(K)$ given by $\lambda(k) f(\cdot) = f(k^{-1} \cdot)$ for every $k \in K$ and $f \in L_2(K)$. The crucial property is (\ref{item:local_estimate}). It is an incarnation for the compact group $K$ of more general phenomenon: uniformly bounded $2$-step representations of \emph{amenable groups} are governed by the left regular representation. We do not elaborate on this as all we need is (\ref{item:local_estimate}).
\begin{lemma}\label{lem:properties_of_d} The distance $d$ has the following properties.
  \begin{enumerate}
  \item\label{item:convexity} (Convexity) For every $m_1,m_2 \in \Pc(G)$,
    \[d(\frac{m_1 +m_2}{2},\frac{m'_1 +m'_2}{2}) \leq \frac 1 2(d(m_1,m'_1)+d(m_2,m'_2)).\]
  \item\label{item:lsc} (Lower-semicontinuity) If $Q \subset G$ is compact and $m_i$ (resp. $m'_i$) is a net of probability measures supported in $Q$ and converging weak-* to $m$ (resp. $m'$), then $d(m,m') \leq \liminf_i d(m_i,m'_i)$.
  \item\label{item:local_estimate} If $\mu,\mu'$ are probability measures on $K$ and $g_1,g_2 \in G$ then \[d(\delta_{g_1} \mu \delta_{g_2},\delta_{g_1}  \mu'  \delta_{g_2}) \leq L^2 e^{s\ell(g_1)+s\ell(g_2)} \|\lambda(\mu-\mu')\|_{B(L_2(K))}.\]
  \end{enumerate}
  
  \end{lemma}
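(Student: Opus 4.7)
The unifying observation is that $d(m,m') = \|\pi(m)-\pi(m')\|_{B(X_0,X_2)}$, where $\pi \colon \Pc(G) \to B(X_0,X_2)$ extends naturally to compactly supported signed measures by Bochner integration, $\pi(m) = \int_G \pi(g)\,dm(g)$ with $\pi(g) := \pi_1(g)\pi_0(e) = \pi_1(e)\pi_0(g)$ (the two expressions agree by the two-step identity). Since $m \mapsto \pi(m)$ is then linear and $d(m,m')$ is the operator norm of the linear quantity $\pi(m)-\pi(m')$, items (\ref{item:convexity}) and (\ref{item:lsc}) essentially come for free.

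For (\ref{item:convexity}), linearity gives
\[\pi\bigl(\tfrac{m_1+m_2}{2}\bigr) - \pi\bigl(\tfrac{m'_1+m'_2}{2}\bigr) = \tfrac12(\pi(m_1)-\pi(m'_1)) + \tfrac12(\pi(m_2)-\pi(m'_2)),\]
and one applies the triangle inequality to the operator norm. For (\ref{item:lsc}), I fix $x \in X_0$ and $\phi \in X_2^*$ of norm $\leq 1$. Strong continuity of $\pi$ makes $g \mapsto \phi(\pi(g)x)$ a bounded continuous function on the compact set $Q$, hence $m \mapsto \phi(\pi(m)x)$ is weak-$*$ continuous on probability measures supported in $Q$. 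So $|\phi(\pi(m)x - \pi(m')x)| = \lim_i |\phi(\pi(m_i)x - \pi(m'_i)x)| \leq \liminf_i d(m_i,m'_i)$, and taking the supremum over admissible $(x,\phi)$ finishes (\ref{item:lsc}).

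The substantial statement is (\ref{item:local_estimate}), whose aim is to realize $\pi(\delta_{g_1}\,\mu\,\delta_{g_2})$ as a factorization $B \circ \lambda(\mu) \circ A$ through the vector-valued Hilbert space $L_2(K;X_1)$. I set $A \colon X_0 \to L_2(K;X_1)$ by $(Ax)(k) = \pi_0(k^{-1}g_2)x$, and $B \colon L_2(K;X_1) \to X_2$ by $Bf = \int_K \pi_1(g_1 k) f(k)\,dk$. The two-step relation, applied as $\pi_1(g_1 k)\pi_0(k^{-1} k_0 g_2) = \pi_1(g_1)\pi_0(k_0 g_2) = \pi(g_1 k_0 g_2)$, is precisely what is needed for $B \circ \lambda(\mu) \circ A\cdot x$ to collapse to $\int_K \pi(g_1 k_0 g_2) x\, d\mu(k_0) = \pi(\delta_{g_1}\,\mu\,\delta_{g_2})x$. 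Since $\ell$ vanishes on $K = \mathrm{SO}(3)$, the bounds $\|\pi_i(\cdot)\| \leq L e^{s\ell(\cdot)}$ together with Cauchy-Schwarz on the normalized Haar measure give $\|A\| \leq L e^{s\ell(g_2)}$ and $\|B\| \leq L e^{s\ell(g_1)}$. Crucially, because $X_1$ is a Hilbert space, the canonical identification $L_2(K;X_1) \cong L_2(K) \otimes X_1$ yields $\|\lambda(\mu-\mu')\otimes \mathrm{id}_{X_1}\|_{B(L_2(K;X_1))} = \|\lambda(\mu-\mu')\|_{B(L_2(K))}$, and multiplying the three norms delivers (\ref{item:local_estimate}).

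The main obstacle is settling on this precise factorization: one has to apply the two-step identity in exactly the right direction so that the $dk$-integral in $B \circ \lambda(\mu) \circ A$ does \emph{not} trivialize back to $\pi(g_1 g_2)$ but instead picks up the copy of $\mu$ through the regular representation. Once the correct choice of $A$, $B$, and orientation of the two-step manipulation are identified, everything else reduces to the elementary estimates and the Hilbert-space identity recalled above.
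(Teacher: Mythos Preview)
Your proof is correct and follows essentially the same approach as the paper. The only cosmetic difference is that for (\ref{item:local_estimate}) you package the argument as an operator factorization $B\circ(\lambda(\mu)\otimes\mathrm{id}_{X_1})\circ A$ through $L_2(K;X_1)$, whereas the paper pairs against a dual vector $y\in X_2^*$ and works with $H(k)=\pi_1(g_1k)^*y$ in $L_2(K;X_1)^*$; these are the same computation viewed on opposite sides of the duality.
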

\begin{proof} Property (\ref{item:convexity}) is obvious, and (\ref{item:lsc}) is immediate from the strong continuity of $\pi_0,\pi_1$ (and hence of $\pi$), see Lemma \ref{lem:strong_continuity_of_pi_on_Pc}. For (\ref{item:local_estimate}), consider $x \in X_0$ and $y \in X_2^*$. For every $k \in K$ define $F(k) =  \pi_0(k^{-1} g_2) x \in X_1$ and $H(k) =  \pi_1(g_1 k)^* y \in X_1^*$. For $k_1,k_2 \in K$ we have
\[ \langle H(k_1),F(k_2)\rangle = \langle y, \pi(g_1 k_1 k_2^{-1} g_2) x\rangle.\]
We view the continuous function $F$ as an element of $L_2(K;X_1)$. Its norm is less than $\sup_{k \in K} \|\pi_0(k^{-1} g_2) x\| \leq L e^{s\ell(g_2)} \|x\|$. Similarly, we view $H$ in the topological dual $L_2(\Omega;X_1)^*$, and it has norm $\leq L e^{s\ell(g_1)} \|y\|_{X_2^*}$. We can compute
  \[ \langle H, (\lambda(\mu) \otimes \mathrm{id}_{X_1})(F)\rangle = \iint_K \langle H(k_1),F(k_2^{-1} k_1) \rangle d\mu(k_2) dk_1 = \langle y, \pi( \delta_{g_1}  \mu  \delta_{g_2}) x\rangle.\]
  One deduces
  \[|\langle y, \pi( \delta_{g_1}  (\mu-\mu')  \delta_{g_2}) x\rangle| \leq \| \lambda(\mu-\mu') \otimes \mathrm{id}_{X_1} \| \|F\|_{L_2(K;X_1)} \|H\|_{L_2(K;X_1)^*},\]
  which is less than \[\|\lambda(\mu-\mu')\|_{B(L_2(K))} L^2 e^{s\ell(g_1)} e^{s\ell(g_2)} \|x\| \|y\|\]
  because $X_1$ is a Hilbert space. The lemma follows by taking the supremum over all $x$ and $y$ in the unit balls of $X_1$ and $X_2^*$ respectively.
  \end{proof}
\begin{rem}\label{rem:properties_of_d_Banach} If we are in the setting of property $(*_{\mathcal E})$ (that is if $X_1$ is a Banach space in $\mathcal E$), then Lemma \ref{lem:properties_of_d} and its proof still holds, with (\ref{item:local_estimate}) replaced by \[d(\delta_{g_1} \mu \delta_{g_2},\delta_{g_1}  \mu'  \delta_{g_2}) \leq L^2 e^{s\ell(g_1)+s\ell(g_2)} \|\lambda(\mu-\mu')\|_{B(L_2(K;X_1))}.\]
  \end{rem}
We shall prove Theorem \ref{thm:main} for $\mathrm{SL}_3(\R)$ in the generality given by the previous lemma. So let $d$ be a distance on the compactly supported probability measures on $G$ satisfying the three conditions (\ref{item:convexity}), (\ref{item:lsc}) and (\ref{item:local_estimate}) in the previous lemma. 

We say that a probability measure $\nu$ on a compact group $K$ is \emph{admissible} if it is absolutely continuous with respect to the Haar measure on $K$ and if the Radon-Nikodym derivative is strictly positive and is a coefficient of a finite dimensional representation of $K$. We say that $\nu$ is central if it belongs to the center of the convolution algebra of Borel measures on $K$.

\begin{prop}\label{prop:local_SL3} Denote by $\lambda_K$ the Haar probability measure on $K$, seen as a probability measure on $G$. There exists $C>0$ such that if $s<\frac 1 4$ and $t:= (\frac 1 2 - 2s)>0$, then
  \begin{equation}\label{eq:mg_Cauchy} d(\lambda_K \delta_g  \lambda_K, \lambda_K \delta_{g'}  \lambda_K) \leq \frac{C}{1-4s} L^2 \max(e^{-t\ell(g)},e^{-t\ell(g')}).
    %\left(e^{-t\ell(g)}+e^{-t\ell(g')}\right).
  \end{equation}
  For every admissible and central probability measure $\nu$ on $K$, there is $C(\nu) \in \R$ such that for every $g \in G$,
  \begin{equation}\label{eq:nugHaar_close_to_mg} d(\nu \delta_g  \lambda_K, \lambda_K \delta_{g}  \lambda_K) \leq C(\nu) L^2 e^{-t\ell(g)}. \end{equation}
\end{prop}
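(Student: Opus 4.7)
The plan is to adapt Lafforgue's strategy for strong property (T) by combining the local estimate of Lemma \ref{lem:properties_of_d}(\ref{item:local_estimate}) with spherical decay on the maximal compact subgroup $K = \mathrm{SO}(3)$, via the Cartan decomposition $G = KAK$. By $K$-biinvariance we may restrict to $g, g' \in A^+$, where $a = \mathrm{diag}(e^{t_1}, e^{t_2}, e^{t_3}) \in A^+$ means $t_1 \geq t_2 \geq t_3$, $\sum t_i = 0$, and $\ell(a) = \max(t_1, -t_3)$.

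The heart of the argument is a one-step estimate: if $a, a^\ast \in A^+$ differ by an $O(1)$ shift of two coordinates while keeping the third fixed, then $d(\lambda_K \delta_a \lambda_K, \lambda_K \delta_{a^\ast} \lambda_K) \leq CL^2 e^{-t\ell(a)}$. To prove this I would factor $a = a_1 a_2$ with the $a_i$ each lying in an $\mathrm{SL}_2$-subgroup and $\ell(a_1) = \ell(a_2) = \ell(a)$ --- e.g.\ for $a = \mathrm{diag}(e^x, 1, e^{-x})$, take $a_1 = \mathrm{diag}(e^x, e^{-x}, 1)$ and $a_2 = \mathrm{diag}(1, e^x, e^{-x})$ --- and apply Lemma \ref{lem:properties_of_d}(\ref{item:local_estimate}) (together with convexity to absorb the outer $\lambda_K$'s) with $g_1 = a_1, g_2 = a_2$ and a pair of probability measures $\mu, \mu'$ on $K$ supported on $\mathrm{SO}(2)$-orbits, chosen so that $\lambda_K \delta_{a_1} \mu \delta_{a_2} \lambda_K = \lambda_K \delta_a \lambda_K$ and $\lambda_K \delta_{a_1} \mu' \delta_{a_2} \lambda_K = \lambda_K \delta_{a^\ast} \lambda_K$. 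The norm $\|\lambda(\mu - \mu')\|_{B(L_2(K))} \leq C e^{-\ell(a)/2}$ then comes from the Hilbert-space estimate $\|T^{(2)}_\delta - T^{(2)}_0\|_{B(L_2(\mathbb{S}^2))} \leq C |\delta|^{1/2}$ evaluated at a parameter $\delta$ of order $e^{-\ell(a)}$ capturing the contraction of $\mathbb{S}^2 = K/\mathrm{SO}(2)$ by $a$. Combined with the local-estimate factor $e^{s\ell(a_1) + s\ell(a_2)} = e^{2s\ell(a)}$, this yields the one-step bound with $t = 1/2 - 2s > 0$ for $s < 1/4$.

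For \eqref{eq:mg_Cauchy}, assume WLOG $\ell(a) \leq \ell(a')$: connect $a$ to $a'$ by a chain of such one-step moves in $A^+$ along which $\ell$ is nondecreasing, and sum the one-step bounds to obtain the geometric series $\sum_{k \geq 0} C L^2 e^{-t(\ell(a)+k)} \leq (C/(1-e^{-t})) L^2 e^{-t\ell(a)}$, with $1/(1 - e^{-t}) = O(1/(1-4s))$, matching the claimed constant. For \eqref{eq:nugHaar_close_to_mg} a parallel argument should work: since $\nu$ is admissible and central, $\nu - \lambda_K = \sum_{\rho \neq \mathrm{triv}} c_\rho \chi_\rho \lambda_K$ is a finite signed measure whose Fourier support lies on finitely many nontrivial irreducible representations $\rho$ of $K$. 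Applying the local estimate with the same decomposition $g = a_1 a_2$ and a family $(\mu_\rho, \mu'_\rho)$ adapted to each nontrivial $\rho$-isotype, the same spherical decay produces an $e^{-t\ell(g)}$ per isotype; summing over the finite set of $\rho$ in the support of $\nu$ yields $C(\nu) L^2 e^{-t\ell(g)}$.

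The main obstacle, in both parts, is the combinatorial-geometric construction of the pair $(\mu, \mu')$ for each elementary move: the measures must be supported on suitable $\mathrm{SO}(2)$-orbits so that $\lambda_K \delta_{a_1} \mu \delta_{a_2} \lambda_K$ is a single double-coset measure; they must realize the correct pair of cosets $KaK$ and $Ka^\ast K$; and simultaneously $\|\lambda(\mu - \mu')\|_{B(L_2(K))}$ must be controlled by the $e^{-\ell(a)/2}$ spherical decay. The algebraic geometry of $\mathrm{SL}_3$ and the explicit Legendre-polynomial diagonalization of $T^{(2)}_\delta$ on the Hilbert space $L_2(\mathbb{S}^2)$ make this possible, and the constraint $s < 1/4$ together with the constant $1/(1-4s)$ arises from balancing the $e^{2s\ell(a)}$ growth of the local estimate against the $e^{-\ell(a)/2}$ spherical decay.
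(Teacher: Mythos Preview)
Your overall strategy for \eqref{eq:mg_Cauchy} --- one-step estimates from the local inequality plus spherical decay, chained along a zig-zag path in the Weyl chamber --- is precisely the paper's. The gap is in the factorization. Your choice $a_1=\mathrm{diag}(e^x,e^{-x},1)$, $a_2=\mathrm{diag}(1,e^x,e^{-x})$ consists of \emph{regular} elements, and a regular diagonal matrix commutes with no copy of $\mathrm{SO}(2)$ inside $K$. Consequently there is no reason for $a_1(\operatorname{supp}\mu)a_2$ to lie in a single $K$-double coset when $\mu$ is $U$-biinvariant, and the ``main obstacle'' you flag is not merely hard but unworkable in this parametrization. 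The paper instead takes $g_1=g_2=D_\alpha=D(2\alpha,-\alpha,-\alpha)$, a \emph{singular} element commuting with the block $U\cong\mathrm{O}(2)$; then automatically
\[
\lambda_K\,\delta_{D_\alpha}\,(\lambda_U\delta_{k_\delta}\lambda_U)\,\delta_{D_\alpha}\,\lambda_K=\lambda_K\,\delta_{D_\alpha k_\delta D_\alpha}\,\lambda_K,
\]
and as $\delta$ varies, the $KAK$-part of $D_\alpha k_\delta D_\alpha$ sweeps the segment $\{a_3=-2\alpha\}$ of the Weyl chamber (Lemma~\ref{lem:spheres_distorted}). The Cartan involution gives the dual segment $\{a_1=\mathrm{const}\}$, and the zig-zag between these two families yields \eqref{eq:mg_Cauchy} with the stated $t$ and $C/(1-4s)$.

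For \eqref{eq:nugHaar_close_to_mg} your Fourier-decomposition idea does not match the mechanism and has a genuine problem: the local estimate~(\ref{item:local_estimate}) controls differences of measures placed \emph{between} $g_1$ and $g_2$, whereas $\nu-\lambda_K$ sits on the \emph{outside}; and anyway $\|\lambda(\chi_\rho\lambda_K)\|_{B(L_2(K))}=1$ for every nontrivial $\rho$, so no decay is available from that factor. The paper's argument is quite different. One writes $\nu=\nu_1\mu$ with $\mu$ admissible on $U$ (a general lemma), so that $\nu\delta_{D_\alpha k_\delta D_\alpha}\lambda_K=\nu_1\delta_{D_\alpha}\,\mu\delta_{k_\delta}\lambda_U\,\delta_{D_\alpha}\lambda_K$ and the twisted spherical estimate \eqref{eq:twisted_Tdelta_Hoelder} applies. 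This yields, for each $a$ in the half-chamber $a_2\geq 0$ with $a_3=-2\alpha$, an element $u_a\in\widetilde U$ with $d(\nu\,\delta_{u_aD(a)}\lambda_K,\ \nu\,\delta_{D(\alpha,\alpha,-2\alpha)}\lambda_K)$ small. Now the centrality of $\nu$ is used --- not to expand in characters, but to commute elements of $K$ across $\nu$: together with the fact that $D(\alpha,\alpha,-2\alpha)$ commutes with $\widetilde U$, one deduces that $\delta_u\nu\delta_{D(a)}\lambda_K$ is close to $\nu\delta_{D(a)}\lambda_K$ for every $u\in\widetilde U$. The Cartan involution gives the same for $U$ on the half $a_2\leq 0$; on the ray $a_2=0$ both hold, and since every $k\in K$ is a product of at most three elements of $U\cup\widetilde U$, averaging over $K$ replaces $\nu$ by $\lambda_K$ with the claimed $e^{-t\ell(g)}$ error. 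This step --- centrality of $\nu$ plus commutation of \emph{wall} elements $D(\alpha,\alpha,-2\alpha)$ with $\widetilde U$ --- is the idea your sketch is missing.
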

This proposition easily implies the Theorem. Indeed, the first half implies that there is $P$ in the completion of $(\Pc(G),d)$ (which is contained in $B(X_0,X_2)$ in our case) such that $d(\lambda_K \delta_g \lambda_K,P) \leq \frac{C}{1-4s}L^2 e^{-t\ell(g)}$.

More generally if $m_0 \in \Pc(G)$, applying the same to $d'(m,m') = d(m_0  m,m_0  m')$ (which satisfies the same assumptions than $d$ with $L^2$ replaced by $L^2 e^{sR}$ if $m_0$ is supported in $\{g,\ell(g)=R\}$), we obtain ${}_{m_0}P$ in the completion of $(\Pc(G),d)$ such that
\begin{equation}\label{eq:Cauchy_for_d_translated}
  d(m_0\lambda_K \delta_g \lambda_K,{}_{m_0}P) \leq \frac{C}{1-4s} L^2 e^{sR-t\ell(g)}.
  \end{equation}
\begin{lemma}\label{lemma:lsc} The map $m_0 \mapsto {}_{m_0}P$ is lower-semicontinuous.
\end{lemma}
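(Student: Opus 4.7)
The plan is to interpret the lemma as follows: for any weak-* convergent net $m_0^{(i)} \to m_0$ in $\Pc(G)$ with supports contained in a common compact set $Q_0 \subset G$, and for any fixed $P_0$ in the $d$-completion of $\Pc(G)$, one has $d({}_{m_0}P, P_0) \leq \liminf_i d({}_{m_0^{(i)}}P, P_0)$; the choice $P_0 = 0$ recovers the natural operator-norm statement $\|{}_{m_0}P\| \leq \liminf_i \|{}_{m_0^{(i)}}P\|$. My approach is to bring the quantitative Cauchy bound \eqref{eq:Cauchy_for_d_translated} and the joint lower-semicontinuity of $d$ from Lemma~\ref{lem:properties_of_d}(\ref{item:lsc}) into contact: the former approximates ${}_{m_0}P$ by the concrete probability measure $m_0 \lambda_K \delta_g \lambda_K$ for $g$ of large length $\ell(g)$, while the latter transfers closeness along the net.

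In detail, fix $g \in G$, set $R := \sup_{h \in Q_0} \ell(h)$, and $\varepsilon_g := \frac{C L^2}{1-4s} e^{sR - t\ell(g)}$. The triangle inequality together with \eqref{eq:Cauchy_for_d_translated} applied to $m_0$ yields
\[d({}_{m_0}P, P_0) \leq \varepsilon_g + d(m_0 \lambda_K \delta_g \lambda_K, P_0).\]
Since convolution against fixed measures is weak-* continuous, the net $m_0^{(i)} \lambda_K \delta_g \lambda_K$ converges weak-* to $m_0 \lambda_K \delta_g \lambda_K$, and all these measures lie in the fixed compact set $Q_0 \, K \, \{g\} \, K$. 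Lemma~\ref{lem:properties_of_d}(\ref{item:lsc}), extended from targets $P_0 \in \Pc(G)$ to arbitrary $P_0$ in the $d$-completion by a standard density argument, then gives
\[d(m_0 \lambda_K \delta_g \lambda_K, P_0) \leq \liminf_i d(m_0^{(i)} \lambda_K \delta_g \lambda_K, P_0) \leq \varepsilon_g + \liminf_i d({}_{m_0^{(i)}}P, P_0),\]
the last step applying \eqref{eq:Cauchy_for_d_translated} to each $m_0^{(i)}$ with the same $R$, since all supports sit in $Q_0$. Chaining these two estimates gives $d({}_{m_0}P, P_0) \leq 2\varepsilon_g + \liminf_i d({}_{m_0^{(i)}}P, P_0)$, and letting $\ell(g) \to \infty$ makes $\varepsilon_g$ vanish, yielding the desired inequality.

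I do not expect a deep obstruction here: the result is essentially a soft by-product of the quantitative Cauchy estimate \eqref{eq:Cauchy_for_d_translated} (which is the real content of Proposition~\ref{prop:local_SL3}) combined with the lower-semicontinuity built into $d$. The only technical points to check are that the constant $R$ can be taken uniformly along the net, which is immediate from the common compact support, and that Lemma~\ref{lem:properties_of_d}(\ref{item:lsc}) extends from $P_0 \in \Pc(G)$ to arbitrary $P_0$ in the completion, which is a routine density argument since $d$ extends continuously to that completion.
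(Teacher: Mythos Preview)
Your approach is essentially the paper's, but your \emph{interpretation} of the statement is slightly too weak for the use the paper makes of it. The paper proves (and applies, immediately after the lemma) the two-variable form: if $m_i \to m_0$ and $m'_i \to m'_0$ weak-* with supports in a common compact set, then $d({}_{m_0}P,\, {}_{m'_0}P) \leq \liminf_i d({}_{m_i}P,\, {}_{m'_i}P)$. The application takes $m_i = \nu_i \to \delta_e$ and $m'_i = \nu_i \delta_{g_1} \to \delta_{g_1}$ \emph{simultaneously}, using that $d({}_{\nu_i}P,{}_{\nu_i \delta_{g_1}}P)=0$ for each $i$; your one-variable version with a fixed target $P_0$ does not directly give this conclusion.

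The fix is cost-free: your very argument already yields the two-variable version if you replace the constant target $P_0$ by a varying $m'_i$, apply \eqref{eq:Cauchy_for_d_translated} on \emph{both} sides to reduce to the distance $d(m_0 \lambda_K \delta_g \lambda_K,\, m'_0 \lambda_K \delta_g \lambda_K)$, and then invoke Lemma~\ref{lem:properties_of_d}(\ref{item:lsc}) directly in its native two-net form. This is exactly what the paper does, and it has the side benefit of rendering your extension of Lemma~\ref{lem:properties_of_d}(\ref{item:lsc}) to targets $P_0$ in the $d$-completion unnecessary.
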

\begin{proof} Let $m_i$ (resp. $m'_i$), $i \in I$ be a net converging weak-* to $m_0$ (resp. $m'_0$) and supported in a common compact subset of $G$, say $\{g,\ell(g) \leq R\}$. For every $g \in G$, \eqref{eq:Cauchy_for_d_translated} yields 
  \[ d({}_{m_0}P,{}_{m'_0}P) \leq 2\frac{C}{1-4s} L^2 e^{sR-t\ell(g)} + d(m_0\lambda_K \delta_g \lambda_K,m'_0\lambda_K \delta_g \lambda_K).\]
  By the lower-semicontinuity of $d$, we deduce
  \[ d({}_{m_0}P,{}_{m'_0}P) \leq 2\frac{C}{1-4s} L^2 e^{sR-t\ell(g)} + \liminf_i d(m_i\lambda_K \delta_g \lambda_K,m'_i\lambda_K \delta_g \lambda_K),\]
  which (by \eqref{eq:Cauchy_for_d_translated}) is bounded above by
  \[ 4\frac{C}{1-4s} L^2 e^{sR-t\ell(g)} + \liminf_i d({}_{m_i}P,{}_{m'_i}P).\]
  The lemma follows by making $\ell(g) \to \infty$.
\end{proof}

%If $g_1,g_2 \in G$, applying to same to $d'(m,m') = d(\delta_{g_1} \ast m \ast \delta_{g_2},\delta_{g_1} \ast m' \ast \delta_{g_2})$ we obtain ${}_{g_1}P_{g_2}$ in the completion of $(\Pc(G),d)$ such that $d(\delta_{g_1}\lambda_K \delta_g \lambda_K\delta_{g_2},{}_{g_1}P_{g_2}) \leq C L^2 e^{s\ell(g_1)+s\ell(g_2) - t\ell(g)}$. We have to prove that ${}_{g_1}P_{g_2}=P$. It is enough to consider the case when $g_2=1$.

The second half of the proposition implies that $d(\nu \delta_g  \lambda_K,P) \leq (\frac{C}{1-4s}+C(\nu))L^2 e^{-t\ell(g)}$ if $\nu$ is an admissible and central probability measure on $K$. Using the convexity (\ref{item:convexity}) and the lower-semicontinuity  (\ref{item:lsc}) of $d$ we get that for $g_1 \in G$,
\[ d(\nu  \delta_{g_1} \lambda_K  \delta_g  \lambda_K,P) \leq (\frac{C}{1-4s}+C(\nu))L^2 e^{-t\ell(g)+t\ell(g_1)}.\]
Making $\ell(g) \to \infty$, we obtain ${}_{\nu\delta_{g_1}}P=P$. By the Peter-Weyl theorem we can find a sequence $\nu_n$ of admissible and central probability measures on $K$ converging weak-* to $\delta_1$. By Lemma \ref{lemma:lsc} we deduce that
\[ d(P,{}_{\delta_{g_1}}P) \leq \liminf_i d({}_{\nu_i}P,{}_{\nu_i\delta_{g_1}}P) = 0.\]
To summarize, if $m_g$ is the $K$-biinvariant probability measure on $KgK$, we have proven that $d(m_g,P) \leq \frac{C}{1-4s}L^2 e^{-t\ell(g)}$ and $\lim_g d(\delta_{g_1} m_g ,P)=0$ for every $g_1 \in G$. If we consider the distance $(m,m') \mapsto d(\check m,\check m')$ for $\check m$ the image of $m$ by the inverse map\footnote{This new distance satisfies the same hypotheses as $d$.}, we also have $\lim_g d(m_g \delta_{g_2}  ,P)=0$ for every $g_2 \in G$, and hence $\lim_g d(\delta_{g_1} m_g \delta_{g_2}  ,P)=0$. This proves the theorem.

It remains to prove Proposition \ref{prop:local_SL3}. As in Lafforgue's original proof \cite{MR2423763} (see also the exposition in \cite{Salle2015}), the proof is based on the harmonic analysis in the compact group $K$.

We introduce the subgroups $U,\widetilde U \subset K$ of block-diagonal matrices
\[ U = \left\{ \begin{pmatrix} * & 0 &0\\0&*&*\\0&*&*\end{pmatrix} \right\} \cap K.\]
\[ \widetilde U = \left\{ \begin{pmatrix} *&*&0\\ *&*&0\\0&0&*\end{pmatrix} \right\} \cap K.\]
$U$ and $\widetilde U$ are both isomorphic to $\mathrm{O}(2)$.

For $\delta \in [0,1]$ we introduce the following matrix $k_\delta \in K$ with entry $(1,1)$ equal to $\delta$~:
\[ k_\delta = \begin{pmatrix} \delta & -\sqrt{1-\delta^2} & 0 \\ \sqrt{1-\delta^2} & \delta& 0\\0&0&1 \end{pmatrix}.\]

The fundamental inequality proven by Lafforgue in \cite[Lemme 2.2]{MR2423763} is that 
\begin{equation}\label{eq:T_delta_Hoelder}\|\iint_{U \times U} \lambda(u k_\delta u')-\lambda(u k_0 u') du du'\|_{B(L_2(K))} \leq 2 |\delta|^{\frac 1 2}.\end{equation}

This implies more generally that if $\mu_1,\mu_2$ are admissible probability measures on $U$, then
\begin{equation}\label{eq:twisted_Tdelta_Hoelder} \|\iint_{U \times U} \lambda(u k_\delta u')-\lambda(u k_0 u') d\mu_1(u) d\mu_2(u')\|_{B(L_2(K))} \leq C(\mu_1,\mu_2) |\delta|^{\frac 1 2}.\end{equation}
See \cite{MR2423763} or \cite[Proposition 2.1]{dlMdlSAIF}.

For $\alpha,\beta,\gamma \in \R$ with $\alpha+\beta+\gamma=0$, we denote
\[ D(\alpha,\beta,\gamma) = \begin{pmatrix} e^\alpha & 0&0\\ 0&e^\beta &0\\ 0&0&e^\gamma\end{pmatrix}.\]
For $\alpha \geq 0$, we simply write $D_\alpha$ for $D(2\alpha,-\alpha,-\alpha)$. It has norm $e^{2\alpha}$ and $\ell(D_\alpha) = 2\alpha$.

We start with the proof of \eqref{eq:mg_Cauchy}. Denote by $\Lambda$ the Weyl chamber, that is $\Lambda = \{(a_1,a_2,a_3) \in \R^3 | a_1\geq a_2 \geq a_3, a_1+a_2+a_3=0\}$. For $(a_1,a_2,a_3) \in \Lambda$ denote
\[ c(a_1,a_2,a_3) = \lambda_K \delta_{D(a_1,a_2,a_3)} \lambda_K.\]
By the $KAK$-decomposition, \eqref{eq:mg_Cauchy} is equivalent to the inequality \[d(c(a_1,a_2,a_3),c(a_1',a_2',a_3')) \leq \frac{C}{1-4s} L^2 \max(e^{-t\max(a_1,-a_3)},e^{-t\max(a_1',-a_3')}).\]
Since $D_\alpha$ commutes with every element of $U$, we can write
\[ \lambda_K \delta_{D_\alpha k_\delta D_\alpha} \lambda_K = \lambda_K \delta_{D_\alpha}\lambda_U \delta_{k_\delta}\lambda_U \delta_{D_\alpha} \lambda_K.\]
It therefore follows from \eqref{eq:T_delta_Hoelder} and the properties of $d$ in Lemma \ref{lem:properties_of_d} that
\[ d( \lambda_K \delta_{D_\alpha k_\delta D_\alpha} \lambda_K, \lambda_K \delta_{D_\alpha k_0 D_\alpha} \lambda_K) \leq 2 L^2 e^{4s\alpha} |\delta|^{\frac 1 2}.\]
To make this formula more readable we compute the KAK decomposition of $D_\alpha k_\delta D_\alpha$. For $\delta=0$, we have
\begin{equation}\label{eq:KAK_dec_for_k_0}
  D_\alpha k_0 D_\alpha = D(\alpha,\alpha,-2\alpha) k_0.
\end{equation}
For $\delta\neq 0$ we have the lemma.
\begin{lemma}\label{lem:spheres_distorted} For every $r \in [\alpha,4\alpha]$ there are $\delta \in [0,1]$ and $u_{r,\alpha},u'_{r,\alpha} \in \widetilde U$ such that $\delta\leq e^{r-4\alpha} \leq 1$ and
\[D_\alpha k_\delta D_\alpha = u_{r,\alpha} D(r,2\alpha-r,-2\alpha)u'_{r,\alpha}.\]
\end{lemma}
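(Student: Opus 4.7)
The plan is to exploit the block structure of $D_\alpha k_\delta D_\alpha$ and then apply the singular value decomposition to the nontrivial block. A direct matrix computation gives
\[ D_\alpha k_\delta D_\alpha = \begin{pmatrix} M(\delta) & 0 \\ 0 & e^{-2\alpha} \end{pmatrix}, \qquad M(\delta) = \begin{pmatrix} e^{4\alpha}\delta & -e^{\alpha}\sqrt{1-\delta^2} \\ e^{\alpha}\sqrt{1-\delta^2} & e^{-2\alpha}\delta \end{pmatrix}, \]
and $\det M(\delta) = e^{2\alpha}$. The key observation is that $\widetilde U$ is exactly the stabilizer of the line $\R e_3$ in $K=\mathrm{SO}(3)$, so any decomposition compatible with the orthogonal splitting $\R^3 = \mathrm{span}(e_1,e_2) \oplus \R e_3$ will produce outer factors in $\widetilde U$.

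I would then write the SVD $M(\delta) = V_1 \operatorname{diag}(\sigma_1, \sigma_2) V_2$ with $\sigma_1 \geq \sigma_2 > 0$: automatically $\sigma_1 \sigma_2 = e^{2\alpha}$, and $\det V_1 \det V_2 = 1$ since $\det M > 0$. Setting
\[ u_{r,\alpha} = \begin{pmatrix} V_1 & 0 \\ 0 & \det V_1 \end{pmatrix}, \qquad u'_{r,\alpha} = \begin{pmatrix} V_2 & 0 \\ 0 & \det V_2 \end{pmatrix} \]
gives elements of $\widetilde U$, and a direct check yields
\[ D_\alpha k_\delta D_\alpha = u_{r,\alpha}\, D(r,\, 2\alpha - r,\, -2\alpha)\, u'_{r,\alpha}, \qquad r := \log \sigma_1. \]
Since $\sigma_1 \geq \sigma_2 = e^{2\alpha}/\sigma_1$, we have $r \geq \alpha$; and at $\delta = 0$ and $\delta = 1$ one computes directly $r = \alpha$ and $r = 4\alpha$ respectively. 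By continuity and the intermediate value theorem, every $r \in [\alpha, 4\alpha]$ is realized by some $\delta \in [0,1]$.

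It remains to verify the bound $\delta \leq e^{r-4\alpha}$. From $\operatorname{tr}(M^T M) = \sigma_1^2 + \sigma_2^2$ and the identity $e^{6\alpha}-1 = 2 e^{3\alpha} \sinh(3\alpha)$, the computation
\[ \sigma_1^2 + \sigma_2^2 = 2e^{2\alpha} + \delta^2 e^{-4\alpha}(e^{6\alpha}-1)^2 \]
reduces, after the substitution $\rho := r - \alpha \in [0, 3\alpha]$, to the clean identity $\delta = \sinh(\rho)/\sinh(3\alpha)$. The desired bound $\sinh(\rho)/\sinh(3\alpha) \leq e^{\rho - 3\alpha}$ expands to $e^{\rho-6\alpha} \leq e^{-\rho}$, i.e.\ $\rho \leq 3\alpha$, which is precisely the hypothesis $r \leq 4\alpha$. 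The whole argument is essentially a calculation; the only non-obvious ingredient is the block structure of $D_\alpha k_\delta D_\alpha$ that forces the orthogonal factors of its SVD into $\widetilde U$.
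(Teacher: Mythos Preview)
Your proof is correct and follows the same overall strategy as the paper: exploit the $2\times 2$ block structure to force the outer orthogonal factors into $\widetilde U$, read off $r$ as $\log\sigma_1$, and use the intermediate value theorem on $\delta\mapsto r$ for surjectivity. The one notable difference is in how the bound $\delta\le e^{r-4\alpha}$ is obtained. The paper does this in one line: since $e^{r}=\sigma_1=\|M(\delta)\|$ dominates the absolute value of any entry of $M(\delta)$, in particular $e^{r}\ge |M_{11}|=e^{4\alpha}\delta$. You instead compute $\operatorname{tr}(M^TM)$ to extract the explicit formula $\delta=\sinh(r-\alpha)/\sinh(3\alpha)$ and then verify the inequality by hand. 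Your route is longer but gives more: the explicit formula shows that $\delta\mapsto r$ is a bijection from $[0,1]$ onto $[\alpha,4\alpha]$, a fact the paper only mentions without proof.
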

\begin{proof}
  For $\delta\neq 0$, $g=D_\alpha k_\delta D_\alpha$ is block diagonal with one eigenvalue $e^{-2\alpha}$ and another block of the form $D k D$ for $D = \mathrm{diag}(e^{2\alpha},e^{-\alpha})$ and $k$ an isometry. In particular $\|g^{-1}\| = e^{2\alpha}$. If we define $r_\alpha (\delta) \in [0,\infty)$ by $\|g\| = e^{r_\alpha(\delta)}$ we therefore have that $g\in \widetilde U D(r_\alpha(\delta),2\alpha - r_\alpha(\delta),-2\alpha)\widetilde U$. By saying that the norm of $g$ is larger that the absolute value of its $(1,1)$ entry we get the desired inequality $\delta e^{4\alpha}\leq e^{r_\alpha(\delta)}$. It remains to show that $r_\alpha$ is surjective. But $r_\alpha$ is continuous on the interval $[0,1]$ so its image contains the interval $[r_\alpha(0),r_\alpha(1)] = [\alpha,4\alpha]$. We do not need it, but it is not hard to check that $r_\alpha$ is actually bijective from $[0,1]$ onto $[\alpha,4\alpha]$.
\end{proof}

In particular for every $(a_1,a_2,a_3),(a'_1,a'_2,a'_3) \in \Lambda$ satisfying $a_3=a'_3$, by applying the preceding lemma with $-2\alpha = a_3=a'_3$ we have that,
\begin{equation}\label{eq:Hoelder_continuity_c}
 d( c(a_1,a_2,a_3),c(a'_1,a'_2,a'_3))| \leq 4L^2 (e^{\frac{a_1}{2}+(1-2s)a_3}+e^{\frac{a'_1}{2}+(1-2s)a_3'}).
\end{equation}

Notice that if $a_2 \geq -1$ we have $\frac{a_1}{2}+(1-2s)a_3 = \frac{a_1+a_2+a_3}{2}+(\frac 1 2 -2s)a_3-\frac{a_2}{2} \leq \frac 1 2 + (\frac 1 2 - 2s)a_3$. Therefore \eqref{eq:Hoelder_continuity_c} implies
\begin{equation}\label{eq:horizontal_estimates}
  d( c(a_1,a_2,a_3),c(a'_1,a'_2,a'_3)) \leq 14 L^2  e^{(\frac{1}{2}-2s)a_3}\textrm{ if $a_3=a'_3$ and }a_2,a'_2 \geq -1.\end{equation}
If we apply the same for the distance $d'(m,m') = d(\rho_* m,\rho_*m')$ for $\rho$ the Cartan automorphism
\[ g \mapsto \begin{pmatrix} 0&0&1\\0&1&0\\1&0&0\end{pmatrix}(g^{-1})^t \begin{pmatrix} 0&0&1\\0&1&0\\1&0&0\end{pmatrix}^{-1},\]
we get that
\begin{equation}\label{eq:vertical_estimates}
  d( c(a_1,a_2,a_3),c(a'_1,a'_2,a'_3)) \leq 14 L^2  e^{-(\frac{1}{2}-2s)a_1}\textrm{ if $a_1=a'_1$ and }a_2,a'_2 \leq 1.\end{equation}
In particular if $c_r= c(r,0,-r)$ and $1 \leq r_1 \leq r_2 \leq r_1+1$,
\begin{eqnarray*}  d(c_{r_2},c_{r_1}) &\leq& d(c_{r_2},c(r_2,r_1-r_2,-r_1)) + d(c(r_2,r_1-r_2,-r_1), c_{r_1})\\ & \leq & 14 L^2 (e^{-(\frac{1}{2}-2s)r_2} + e^{-(\frac{1}{2}-2s)r_2}).\end{eqnarray*}
This implies (since $\sum_{k \geq 0} e^{-(\frac{1}{2}-2s)k} \leq \frac{3}{1-4s}$) that for every $r,r' \geq 1$, \[d(c_r,c_{r'}) \leq \frac{42}{1-4s} L^2 \max(e^{-(\frac{1}{2}-2s)r},e^{-(\frac{1}{2}-2s)r'}).\] It follows easily from the above estimates that
\[ d(c(a_1,a_2,a_3),c(a'_1,a'_2,a'_3)) \leq \frac{70}{1-4s}L^2 \max( e^{-(\frac{1}{2}-2s)\max(a_1,-a_3)}, e^{-(\frac{1}{2}-2s)\max(a_1,-a_3)}),\]
which is exactly \eqref{eq:mg_Cauchy}. The previous computations are best understood on a picture (see Figure \ref{figure:zigzag})~: \eqref{eq:horizontal_estimates} expresses that $c$ is almost constant on lines of slope $-\frac 1 2$ in the region $s\geq -1$, whereas \eqref{eq:vertical_estimates} expresses that $c$ is almost constant on vertical lines in the region $s\leq 0$. These estimates are combined by the zig-zag path in Figure \ref{figure:zigzag}.

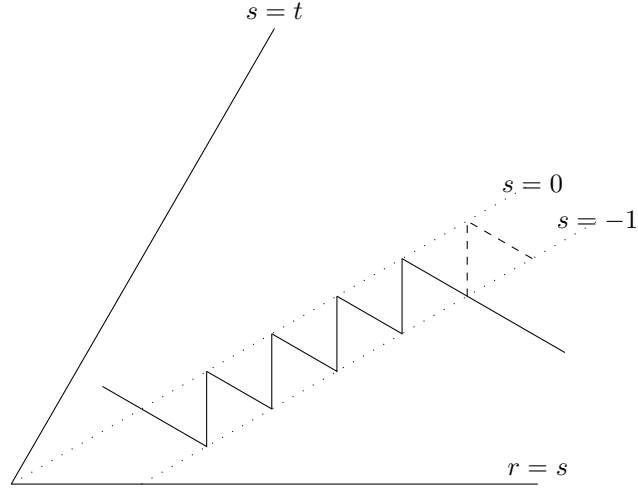
\begin{figure}
  \center
\begin{tikzpicture}[scale=1]
\draw (0,0)--(0:7) node[above] {$r=s$};
\draw (0,0)--(60:7)  node[above] {$s=t$};
\draw[shift=(-30:1), loosely dotted] (30:1)--(30:8) node {$s=-1$};
\draw[loosely dotted] (0,0)--(30:8) node {$s=0$};
 \foreach \x in {0,1,2,3}
  {
\draw[shift=(30:2+\x)] (0,0)--(-30:1)--(30:1);
  }
\draw[shift=(30:2)] (0,0)--(150:.6);
\draw[shift=(30:2+4)] (0,0)--(-30:2.5);
\draw[shift=(30:2+5), dashed] (-30:1)--(0,0)--(-90:1);
%% \draw[shift=(30:2+4), dashed] (0,0)--(-30:1)--(30:1);
%% \draw[shift=(30:2+5), dashed] (0,0)--(-30:1);
%% \begin{scope}[shift={(30:2+4)}] 
%% \begin{scope}[shift={(-30:1)}] 
%% \draw (150:1)--(0,0)--(90:0.3);
%% \draw[shift=(90:0.3)] (0,0)--(-30:2.5); 
%% \end{scope}
%% \end{scope}
\end{tikzpicture}
\caption{The zig-zag path in the Weyl chamber $\Lambda$.} \label{figure:zigzag}
\end{figure}

We now move to the proof of \eqref{eq:nugHaar_close_to_mg}. We start by a general lemma, valid for any pair of compact groups $U \subset K$.
\begin{lemma} Every admissible probability measure $\nu$ on $K$ can be written as $\nu_1  \mu$ for admissible probability measures $\nu_1$ on $K$ and $\mu$ on $U$.
\end{lemma}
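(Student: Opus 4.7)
The plan is to reduce the factorization to a finite-dimensional linear algebra problem and solve it by a small perturbation of the identity. Write $\nu = f \cdot dk$ where $f$ is the strictly positive density, a matrix coefficient of some finite-dimensional representation $(\pi,V)$ of $K$, and let $E_\pi \subset C(K)$ denote the (finite-dimensional) space of all matrix coefficients of $\pi$. Since $E_\pi$ is closed under right translation by $K$, for every probability measure $\mu$ on $U$ the right convolution operator $R_\mu g(k) := \int_U g(ku^{-1})\, d\mu(u)$ restricts to an operator on $E_\pi$. Under the identification $E_\pi \cong V^* \otimes V$ one checks $R_\mu = \mathrm{id}_{V^*} \otimes \pi(\check\mu)$ where $\pi(\check\mu) := \int_U \pi(u^{-1})\, d\mu(u) \in \mathrm{End}(V)$; in particular $R_\mu$ is invertible on $E_\pi$ as soon as $\pi(\check\mu)$ is invertible on $V$.

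I would then construct admissible approximations of $\delta_e$ inside the admissible cone on $U$ via a standard Peter--Weyl recipe: pick a strictly positive matrix coefficient $\varphi$ of some finite-dimensional representation of $U$ whose strict maximum is attained at $e$, and let $\mu_n$ be the probability measure on $U$ with density proportional to $\varphi^n$. Each $\varphi^n$ is a strictly positive matrix coefficient of the $n$-fold tensor power of the underlying representation, so every $\mu_n$ is admissible, and the $\mu_n$ concentrate weak-$*$ at $e$. Because $V$ is finite-dimensional, $\pi(\check\mu_n) \to \mathrm{id}_V$ in operator norm, so $R_{\mu_n}$ becomes invertible on $E_\pi$ for $n$ large.

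For such $n$, define $f_1 := R_{\mu_n}^{-1} f \in E_\pi$, so that $f_1$ is \emph{automatically} a matrix coefficient of $\pi$. As $R_{\mu_n}$ is close to the identity on the finite-dimensional $E_\pi$, all norms being equivalent there, $f_1$ is uniformly close to $f$; since $f > 0$ on the compact group $K$, it follows that $f_1 > 0$ for $n$ large enough. Fubini together with the right-invariance of Haar on $K$ gives $\int_K f_1\, dk = \int_K (R_{\mu_n} f_1)\, dk = \int_K f\, dk = 1$. Hence $\nu_1 := f_1 \cdot dk$ is an admissible probability measure on $K$, and the identity $R_{\mu_n} f_1 = f$ translates exactly into $\nu_1 \mu_n = \nu$.

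The only mildly nontrivial step is the construction of $\varphi$: one needs a strictly positive matrix coefficient that \emph{strictly} separates $e$ from every other point of $U$. For $U \simeq \mathrm{O}(2)$ a single character will not do, as the scalars $\pm\mathrm{Id}$ both maximize $|\chi|$, but a short linear combination of characters of two non-equivalent representations (shifted and rescaled to stay positive) settles the matter. This positivity bookkeeping is the only place where the structure of $U$ enters; once it is secured, the rest of the argument is pure finite-dimensional perturbation theory on $E_\pi$, which I expect to be routine.
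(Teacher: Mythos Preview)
Your argument is correct and follows essentially the same route as the paper: both proofs pick an admissible approximate identity $(\mu_n)$ on $U$, observe that right convolution by $\mu_n$ acts on the finite-dimensional space of matrix coefficients $E_\pi$ and tends to the identity there, invert it for large $n$, and check that the preimage $f_1$ of $f$ stays positive. Your treatment is simply more explicit in two places the paper leaves implicit---the identification $R_\mu = \mathrm{id}_{V^*}\otimes\pi(\check\mu)$ and the concrete construction of the admissible $\mu_n$ via powers of a peaked positive coefficient---but these are elaborations rather than a different strategy.
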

\begin{proof}
By assumption, the Radon-Nikodym derivative $d\nu/dk$ of $\nu$ is positive and is a coefficient of a finite dimensional representation $V$ of $K$. Denote by $C_V$ the finite dimensional space of real-valued matrix coefficients of $V$, equipped (say) with the $L_\infty(K)$-norm. Let $\mu_n$ be a sequence of admissible probability measures on $U$ converging weak-* to $\delta_e$. Then $T_n \colon f \in C_V \mapsto f \ast \mu_n \in C_V$ converges pointwise to the identity. Since $C_V$ has finite dimension, for $n$ large enough this linear map is invertible and there is a sequence $f_n \in C_V$ converging to $d\nu/dk$ such that $T_n f_n = d\nu/dk$. Since $d\nu/dk$ is positive, so is $f_n$ for $n$ large enough. In other words, $\nu_1=f_n dk$ is a probability measure such that $\nu_1  \mu_n=\mu$, as requested. 
\end{proof}

Let us fix $\nu$ an admissible and central probability measure on $K$. Let $\nu = \nu_1 \mu$ be a decomposition given by the previous lemma. Since $D_\alpha$ commutes with every element of $U$, one can write for $\alpha>0$ and $\delta \in [-1,1]$
\[ \nu \delta_{D_\alpha k_\delta D_\alpha} \lambda_K = \nu_1 \delta_{D_\alpha} \mu \delta_{k_\delta} \lambda_U \delta_{D_\alpha} \lambda_K.\]
By the convexity and lower-semicontinuity of $d$, the distance
\[d(\nu \delta_{D_\alpha k_\delta D_\alpha} \lambda_K, \nu \delta_{D_\alpha k_0 D_\alpha} \lambda_K)\]
is therefore bounded by
\[ \sup_{k,k' \in K} d(\delta_{kD_\alpha} \mu \delta_{k_\delta} \lambda_U \delta_{D_\alpha k'},\delta_{kD_\alpha} \mu \delta_{k_0} \lambda_U \delta_{D_\alpha k'}).\]
By combining this inequality with the last point in Lemma \ref{lem:properties_of_d}, \eqref{eq:twisted_Tdelta_Hoelder}, \eqref{eq:KAK_dec_for_k_0} and Lemma \ref{lem:spheres_distorted}, we get a constant $C(\nu)$ such that for every $\alpha$ and $r\in [\alpha,4\alpha]$
\[ d(\nu \delta_{u_{r,\alpha} D(r,2\alpha -r,-2\alpha)}\lambda_K, \nu \delta_{D(\alpha,\alpha,-2\alpha)}\lambda_K) \leq C(\nu) L^2 e^{\frac{r}{2}-(2-4s)\alpha}.\]
In particular and as for \eqref{eq:horizontal_estimates}, if $a=(a_1,a_2,a_3)\in\Lambda$ satisfies $a_2 \geq 0$ and $a_3=-2\alpha$, there is $u_a \in \widetilde U$ such that
\[ d(\nu \delta_{u_a D(a_1,a_2,a_3)} \lambda_K,\nu \delta_{D(\alpha,\alpha,-2\alpha)} \lambda_K )\leq C(\nu) L^2  e^{(1-4s)\alpha}.\]
Let us apply the preceding to the distance $d'(m,m')=d(\delta_{u u_{a}^{-1}}m,\delta_{u u_{a}^{-1}}m')$ for some $u \in \widetilde U$, which satisfies the same assumptions as $d$. Note that since $\nu$ is central and $D(\alpha,\alpha,-2\alpha)$ commutes with $u u_{a}^{-1}$, we have
\[ \delta_{u u_{a}^{-1}} \nu \delta_{u_{a} D(a_1,a_2,a_3)}\lambda_K = \delta_{u} \nu \delta_{D(a_1,a_2,a_3)}\lambda_K\]
and
\[ \delta_{u u_a^{-1}} \nu \delta_{D(\alpha,\alpha,-2\alpha)}\lambda_K = \nu \delta_{D(\alpha,\alpha,-2\alpha)}\lambda_K.\]
Therefore we obtain
\begin{equation}\label{eq:horizontal_estimate_twisted} d(\delta_{u} \nu \delta_{D(a_1,a_2,a_3)}\lambda_K,\nu \delta_{D(\alpha,\alpha,-2\alpha)}\lambda_K) \leq C(\nu) L^2 e^{(1-4s)\alpha}.\end{equation}
In particular, by the triangle inequality
\[d(\delta_{u} \nu \delta_{D(a_1,a_2,a_3)}\lambda_K,\nu \delta_{D(a_1,a_2,a_3)}\lambda_K) \leq 2C(\nu) L^2 e^{(\frac 1 2-2s)a_3}\]
for every $u \in \widetilde U$ and $(a_1,a_2,a_3)\in \Lambda$ with $a_2 \geq 0$. Similarly by applying the Cartan automorphism $\rho$ we obtain
\[ d(\delta_{u} \nu \delta_{D(a_1,a_2,a_3)}\lambda_K,\nu \delta_{D(a_1,a_2,a_3)}\lambda_K) \leq 2C(\nu) L^2 e^{-(\frac 1 2-2s)a_1}\]
for every $u \in U$ and $(a_1,a_2,a_3)\in \Lambda$ with $a_2 \leq 0$. Indeed, $\rho$ exchanges $U$ and $\widetilde U$ and preserves $\nu$.

Consider for a moment the particular case $a_2=0$. In that situation both estimates can be applied, and give that
\[ d(\delta_{u} \nu \delta_{D(a,0,-a)}\lambda_K,\nu \delta_{D(a,0,-a)}\lambda_K) \leq 2C(\nu) L^2 e^{-(\frac 1 2-2s)a}\]
for every $u \in U \cup \widetilde U$. But every element of $K$ can be written as a product of $\leq 3$ elements of $U \cup \widetilde U$, so the preceding inequality implies
\[ d(\delta_{k} \nu \delta_{D(a,0,-a)}\lambda_K,\nu \delta_{D(a,0,-a)}\lambda_K) \leq 6C(\nu) L^2 e^{-(\frac 1 2-2s)a}\]
for every $k \in K$. If we average with respect to $K$ (and use one last time the convexity and lower-semicontinuity of $d$) we obtain
\[ d(\lambda_K \delta_{D(a,0,-a)}\lambda_K,\nu \delta_{D(a,0,-a)}\lambda_K) \leq 6 C(\nu) L^2 e^{-(\frac 1 2 - 2s) a}.\]
By \eqref{eq:horizontal_estimate_twisted} we get
\[ d(\lambda_K \delta_{D(\alpha,\alpha,-2\alpha)}\lambda_K,\nu \delta_{D(\alpha,\alpha,-2\alpha)}\lambda_K) \leq 8 C(\nu) L^2 e^{-(1-4s)\alpha}.\]
By \eqref{eq:horizontal_estimate_twisted} again this implies that 
\[ d(\lambda_K \delta_{D(a_1,a_2,a_3)}\lambda_K,\nu \delta_{D(a_1,a_2,a_3)}\lambda_K) \leq 10 C(\nu) L^2 e^{(\frac 1 2-2s)a_3}\]
for every $(a_1,a_2,a_3)\in\Lambda$ with $a_2 \geq 0$. By symmetry (\emph{i.e.} by conjugating by the Cartan automorphism) we also get
\[ d(\lambda_K \delta_{D(a_1,a_2,a_3)}\lambda_K,\nu \delta_{D(a_1,a_2,a_3)}\lambda_K) \leq 10 C(\nu) L^2 e^{-(\frac 1 2-2s)a_1}\]
for every $(a_1,a_2,a_3)\in\Lambda$ with $a_2 \leq 0$. To summarize, we have proven that
\[ d(\lambda_K \delta_{g}\lambda_K,\nu \delta_{g}\lambda_K) \leq 10 C(\nu) L^2 e^{-(\frac 1 2-2s)\ell(g)}\]
for every $g$ of the form $D(a_1,a_2,a_3)$. Considering the $KAK$ decomposition, we obtain the validity of the preceding inequality for $g \in G$ be arbitrary. This concludes the proof of \eqref{eq:nugHaar_close_to_mg} and therefore of Theorem \ref{thm:main} for $\mathrm{SL}_3(\R)$.

\begin{rem}\label{rem:proof_SL3R_Banach} The only place in the proof where we used in an essential way that $X_1$ is a Hilbert was in the conclusion (\ref{item:local_estimate}) of Lemma \ref{lem:properties_of_d}, which allowed us to exploit \eqref{eq:T_delta_Hoelder} and \eqref{eq:twisted_Tdelta_Hoelder}. However, for Banach spaces we have Remark \ref{rem:properties_of_d_Banach}, and the rest of the above proof shows that $(\mathrm{SL}_3(\R),\ell)$ satisfies $(*_{\mathcal E})$ provided that there is $\alpha>0$ and $C>0$ such that for every $\delta \in [-1,1]$ and $X \in \mathcal E$,
  \[\|\iint_{U \times U} \lambda(u k_\delta u')-\lambda(u k_0 u') du du'\|_{B(L_2(K;X))} \leq C |\delta|^{\frac \alpha 2}\]
  and
  \[\|\iint_{U \times U} \lambda(u k_\delta u')-\lambda(u k_0 u') d\mu_1(u) d\mu_2(u')\|_{B(L_2(K;X))} \leq C(\mu_1,\mu_2) |\delta|^{\frac \alpha 2}.\]
  The first equality is equivalent to \eqref{eq:Tdelta}, and the second actually follows from the first, see \cite[Proposition 2.1]{dlMdlSAIF}. This proves Theorem \ref{thm:main_Banach_valued_real} for the group $G=\mathrm{SL}_3(\R)$.
\end{rem}

\subsection{Case of $\mathrm{SL}_3(\F)$}
The proof of Theorem \ref{thm:main} for $G=\mathrm{SL}_3(\F)$ and $\F=\Q_p$ or $\mathbf F_p(\!(t)\!)$ for some prime number $p$ is essentially the same as for $\F=\R$.

We only give a rapid overview of the small adjustements one has to make. In that case the maximal compact subgroup $K \subset G$ is $\mathrm{SL}_3(\cO)$ if $\cO$ is the ring of units of $\F$ (namely $\cO=\Z_p$ or $\mathbf F_p[\![t]\!]$). It is more natural to prove the theorem for the length function $\ell(g) = \max(\log \|g\|,\log \|g^{-1}\|)$, where $\|\cdot\|$ is the norm induced from the natural $K$-invariant norm $\|(s_1,s_2,s_3)\|=\max(|s_1|,|s_2|,|s_3|)$ on $F^3$ (where $|s|$ is the standard absolute value on $F$, \emph{i.e.} the amount by which the Haar measure on $(\F,+)$ is scaled under the multiplication by $s$). With this normalization, as for the case $F=\R$, Theorem \ref{thm:main} holds with any $s<\frac 1 4$, $t= \frac 1 2 -2s$, $C=\frac{C_0 p }{1-4s}$ for a constant universal constant $C_0$ (independant from $p$) and for any $K$-biinvariant sequence $m_n$ supported in $\{g \in G |\ell(g)=n\}$.

Indeed, Lemma \ref{lem:properties_of_d} holds in this setting, and all amounts to proving Proposition \ref{prop:local_SL3}. For that, one defines the subgroups $U,\widetilde U$ by the same formulas as for the real case, but in that case they are both isomorphic to $\mathrm{GL}_2(\cO)$. The matrix $k_\delta \in K$ is defined for every $\delta \in \cO$ by the formula
\[ k_\delta = \begin{pmatrix} \delta & -1 & 0 \\ 1 & 0 & 0\\ 0&0&1 \end{pmatrix}\]
Both formulas \eqref{eq:T_delta_Hoelder} and \eqref{eq:twisted_Tdelta_Hoelder} hold in this setting. This can be derived from \cite{MR2423763}, see also the more general Proposition \ref{prop:estimates_for_T_delta_Banach_nonarchimedean} below.

The Weyl chamber is now replaced by its discretized version $\Lambda = \{(a_1,a_2,a_3) \in \Z^3| a_1 \geq a_1 \geq a_3, a_1+a_2+a_3=0\}$, which still parametrizes the $K$-double cosets by the matrices
\[ D(\alpha,\beta,\gamma) = \begin{pmatrix} \e^\alpha & 0&0\\ 0&\e^{\beta} &0\\ 0&0&\e^{\gamma}\end{pmatrix},\]
where $\e$ denotes the inverse of a uniformizer in $\cO$. To fix ideas, $\e=p^{-1}$ if $\F=\Q_p$ and $\e= t^{-1}$ if $\F=\mathbf F_p(\!(t)\!)$.

Lemma \ref{lem:spheres_distorted} is replaced by its formal analogue
\begin{lemma} Let $\alpha \in \N$. For every integer $r \in [\alpha,4\alpha]$ there are $\delta \in \cO$ and $u_{r,\alpha},u'_{r,\alpha} \in \widetilde U$ such that $|\delta|\leq |\e|^{r-4\alpha} \leq 1$ and
  \[D_\alpha k_\delta D_\alpha = u_{r,\alpha} D(r,2\alpha-r,-2\alpha)u'_{r,\alpha}.\]
\end{lemma}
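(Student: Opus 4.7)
The plan is to mimic the argument of Lemma \ref{lem:spheres_distorted}, replacing the real-analytic reasoning (eigenvalues, continuity) by its elementary-divisor analogue over the discrete valuation ring $\cO$. First, a direct multiplication yields
\[D_\alpha k_\delta D_\alpha = \begin{pmatrix} \e^{4\alpha}\delta & -\e^\alpha & 0 \\ \e^\alpha & 0 & 0 \\ 0 & 0 & \e^{-2\alpha} \end{pmatrix},\]
which, with respect to the splitting $\F^3 = \F^2 \oplus \F$, has the block diagonal form $A \oplus \e^{-2\alpha}$ with $A \in \mathrm{GL}_2(\F)$ and $\det A = \e^{2\alpha}$. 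Since $\widetilde U$ identifies with $\mathrm{GL}_2(\cO)$ via $B \mapsto B \oplus (\det B)^{-1}$, producing the desired decomposition reduces to finding an elementary divisor (Cartan) factorization
\[A = B_1\, \mathrm{diag}(\e^r,\e^{2\alpha - r})\, B_2, \qquad B_1, B_2 \in \mathrm{GL}_2(\cO);\]
the $(3,3)$ entries then automatically match, since $\det A = \e^{2\alpha}$ forces $\det(B_1 B_2) = 1$.

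Next I would invoke the $\mathrm{GL}_2$-Cartan decomposition over the non-archimedean local field $\F$: every element of $\mathrm{GL}_2(\F)$ admits such a factorization, and the integer $r$ is characterized by $|\e|^r = \|A\|_{\mathrm{op}}$, subject to the ordering condition $r \geq 2\alpha - r$, i.e.\ $r \geq \alpha$. Here $\|\cdot\|_{\mathrm{op}}$ is the operator norm associated with the max norm on $\F^2$, which in the ultrametric setting equals $\max_{i,j}|A_{ij}|$. For our matrix this gives
\[\|A\|_{\mathrm{op}} = \max\bigl(|\e|^{4\alpha}|\delta|,\, |\e|^\alpha\bigr),\]
so automatically $r \in [\alpha, 4\alpha]$ for any $\delta \in \cO$, and the trivial inequality $\|A\|_{\mathrm{op}} \geq |\e|^{4\alpha}|\delta|$ yields the required bound $|\delta| \leq |\e|^{r - 4\alpha}$.

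The remaining point is surjectivity of the map $\delta \mapsto r$, the analogue of the intermediate-value step used in the real case. The discreteness of the value group trivializes this: given an integer $r \in [\alpha, 4\alpha]$, the element $\delta = \e^{r-4\alpha}$ lies in $\cO$ (since $r - 4\alpha \leq 0$ and $\e^{-1}$ is a uniformizer) and has $|\delta| = |\e|^{r-4\alpha}$, whence $\|A\|_{\mathrm{op}} = |\e|^r$ and the Cartan decomposition places $\mathrm{diag}(\e^r, \e^{2\alpha - r})$ as its middle factor. I do not expect a real obstacle; the only points requiring care are the bookkeeping that the $\mathrm{GL}_2(\cO)$-Cartan factors do lift to $\widetilde U$ via the determinant identity above, and the sign/valuation conventions, since the paper normalizes $\e$ to be the \emph{inverse} of a uniformizer (so $|\e| > 1$ and $|\e|^{r - 4\alpha} \leq 1$ whenever $r \leq 4\alpha$).
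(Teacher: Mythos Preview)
Your proposal is correct and follows essentially the same approach as the paper. The paper's own proof is a two-line sketch (``the operator norm of $g\in\mathrm{SL}_3(\F)$ is $\max_{i,j}|g_{i,j}|$, therefore one may take $\delta=\e^{r-4\alpha}$''), and you have simply spelled out the details: the block-diagonal form, the $\mathrm{GL}_2(\cO)$-Cartan decomposition identifying $r$ via $\|A\|=|\e|^r$, the automatic determinant matching for the lift to $\widetilde U$, and the explicit choice $\delta=\e^{r-4\alpha}$ replacing the intermediate-value argument.
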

The proof is similar and actually even simpler than in the archimedean case, because when $F$ is non-archimedean the operator norm of a matrix $g \in \mathrm{SL}_3(F)$ is simply $\|g\|=\max_{i,j} |g_{i,j}|$. Therefore one may take $\delta=\e^{r-4\alpha}$.

Proposition \ref{prop:local_SL3} is deduced from \eqref{eq:T_delta_Hoelder}, \eqref{eq:twisted_Tdelta_Hoelder} and the preceding lemma in the same way as in the real case. There is just one difficulty that occurs from the discreteness of $\Lambda$. Indeed, since $\alpha$ has to be an integer in the preceding lemma, one obtains \eqref{eq:Hoelder_continuity_c} only when $a_3=a'_3 = -2\alpha$ is even. A way to obtain the same inequality also when $a_3= a'_3$ is odd (say equal to
$1-2\alpha$ for an integer $\alpha$) is to apply the same reasoning (by replacing $D_\alpha k_\delta D_\alpha$ by $D_\alpha k_\delta D_{\alpha-1}$) to the new distance $d'(m,m') = d(u_0(m),u_0(m'))$ for $u_0$ is the (non-inner) automorphism of $G$ preserving $U$ and given by 
\begin{equation}\label{eq:theta_0} u_0(g) = \begin{pmatrix} \e^{-1} & 0 & 0 \\ 0& 1 &0\\ 0&0&1\end{pmatrix} g\begin{pmatrix} \e & 0 & 0 \\ 0& 1 &0\\ 0&0&1\end{pmatrix}.\end{equation}
Indeed, one checks easily that $u_0(D_\alpha k_\delta D_{\alpha-1})) \in \widetilde U D(a_1,a_2,a_3) \widetilde U$ if $\delta = e^{2a_3-a_1}$, and to $D(\alpha,\alpha-1,1-2\alpha) \widetilde U$ if $\delta=0$, and this leads to the validity of \eqref{eq:Hoelder_continuity_c} also when $a_3= a'_3$ is odd, at the cost of replacing $L$ by $L |\e|^s$. The reason for this is that, since $u_0$ does not preserve the length but only satisfies $|\ell(g) - \ell(u_0(g))| \leq \log |\e|$, Lemma \ref{lem:properties_of_d} holds for $d'$ with $L$ replaced by $L|\e|^s$. The same adjustment has to be made to obtain \eqref{eq:nugHaar_close_to_mg}.

Another way to fix this parity issue is to work from the beginning with $\mathrm{PGL}_3(\F)$ where the automorphism $u_0$ becomes inner, as in \cite{MR2423763}.

\begin{rem}\label{rem:proof_SL3F_Banach} When $X_1$ is a Banach space, Remark \ref{rem:proof_SL3R_Banach} holds similarly, except that in that case we know exactly for which Banach spaces there exists $\alpha,C>0$ such that for every $\delta \in \cO$
\[\|\iint_{U \times U} \lambda(u k_\delta u')-\lambda(u k_0 u') du du'\|_{B(L_2(K;X))} \leq C |\delta|^{\frac \alpha 2}.\]
These are exactly the Banach spaces of nontrivial Rademacher type, see \cite{MR2574023} or Proposition \ref{prop:estimates_for_T_delta_Banach_nonarchimedean} below. This proves Theorem \ref{thm:main_Banach_valued_nonarch} for the group $G=\mathrm{SL}_3(\F)$.
\end{rem}

We are left to prove the following proposition, which is a variant of \cite[Lemme 4.4]{MR2574023}.
\begin{prop}\label{prop:estimates_for_T_delta_Banach_nonarchimedean} Let $\mathcal E$ be a class of Banach spaces with nontrivial type. There is $\alpha>0$, and for every admissible probability measures $\mu_1,\mu_2$ on $\widetilde U$, a constant $C(\mu_1,\mu_2)$ such that
\[\|\iint_{U \times U} \lambda(u k_\delta u')-\lambda(u k_0 u') d\mu_1(u) d\mu_2(u')\|_{B(L_2(K;X))} \leq C(\mu_1,\mu_2) |\delta|^{\frac \alpha 2}.\]  
\end{prop}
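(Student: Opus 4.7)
The plan is a two-step reduction to the Haar case, which is essentially the content of Liao's Lemme~4.4.

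First, for $\mu_1=\mu_2=\lambda_U$ the Haar probability measure on $U$, I would establish
\[
\bigl\|\lambda(\lambda_U \delta_{k_\delta} \lambda_U) - \lambda(\lambda_U \delta_{k_0} \lambda_U)\bigr\|_{B(L_2(K;X))} \leq C_0\, |\delta|^{\alpha/2}, \qquad X \in \mathcal E,
\]
for some $\alpha = \alpha(\mathcal E) > 0$ and $C_0 = C_0(\mathcal E)$. This is essentially \cite[Lemme 4.4]{MR2574023}. The argument exploits the fact that the $(2,3)$-block of $U$ is isomorphic to $\mathrm{GL}_2(\cO)$ and acts on the Bruhat--Tits tree of $\mathrm{SL}_2(\F)$; the double cosets $U k_\delta U$ correspond to graded spheres in this tree, and the sandwiched difference is written as a telescoping sum indexed by the valuation of $\delta$. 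The nontrivial type inequality \eqref{eq:typep}, applied to this sum viewed as a Rademacher series in $L_2(K;X)$, yields a H\"older exponent $\alpha$ depending only on the uniform type constants of $\mathcal E$.

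Next, to pass from Haar to admissible measures, I would decompose $L_2(K;X)$ under the left regular representation of $U$ as $\bigoplus_\pi H_\pi$ by Peter--Weyl, where $\pi$ runs over the unitary dual of $U$; write $P_\pi$ for the projection onto the $\pi$-isotypic component $H_\pi$. The admissibility assumption says $f_i := d\mu_i/d\lambda_U$ is a positive coefficient of a finite-dimensional representation, so
\[
\lambda(\mu_i) = \sum_{\pi \in S_i} \bigl(\widehat{\mu_i}(\pi) \otimes \mathrm{id}\bigr)\, P_\pi
\]
for a finite subset $S_i$ of the unitary dual and matrices $\widehat{\mu_i}(\pi) \in \mathrm{End}(V_\pi)$ of norm controlled by $\mu_i$. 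The operator to bound thus decomposes into a finite sum of twisted spherical terms
\[
\bigl(\widehat{\mu_1}(\pi) \otimes \mathrm{id}\bigr) P_\pi \bigl[\lambda(k_\delta) - \lambda(k_0)\bigr] P_{\pi'} \bigl(\widehat{\mu_2}(\pi') \otimes \mathrm{id}\bigr), \quad \pi \in S_1,\ \pi' \in S_2,
\]
and it suffices to bound each $P_\pi [\lambda(k_\delta) - \lambda(k_0)] P_{\pi'}$ by $C(\pi, \pi') |\delta|^{\alpha/2}$. Using $P_\pi g = \dim(V_\pi)\, \chi_\pi \ast g$ (convolution with the character), this twisted operator on $L_2(K; X)$ is transported, via a standard Frobenius-type identification, to a Haar-sandwiched operator on the auxiliary Banach space $L_2(K; V_\pi^* \otimes V_{\pi'} \otimes X)$; this auxiliary space still has nontrivial type with constants worsened only by $\dim(V_\pi)\dim(V_{\pi'})$, so the first step applies with the same exponent $\alpha$ and the desired bound follows.

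The main obstacle is the first step: Liao's tree-theoretic combinatorial argument combined with the Rademacher estimate from nontrivial type. The passage to admissible $\mu_i$ in the second step is a routine Peter--Weyl manipulation on the compact group $U$ once the Haar case is granted, entirely parallel to the archimedean reduction in \cite[Proposition 2.1]{dlMdlSAIF} quoted earlier in the paper.
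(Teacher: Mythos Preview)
Your two-step strategy is valid and is in fact explicitly acknowledged in the paper as a legitimate alternative: the paper writes that ``by the very same argument as in the proof of \cite[Proposition 2.1]{dlMdlSAIF} (suitably adapted to replace Lie groups by totally disconnected groups), we could restrict to the case when $\mu_1=\mu_2$ is the Haar measure on $U$.'' So your second step, the Peter--Weyl reduction to the Haar case, is precisely the route the paper chooses \emph{not} to take, preferring instead a direct unified argument.

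The paper's proof is organized quite differently. Rather than separating the Haar and admissible cases, it exploits the profinite structure of $U$: an admissible measure on $U$ is automatically bi-invariant under some congruence subgroup $U_j = \ker(U \to \SL_3(\cO_j))$. The paper then writes down explicit matrix sections $\alpha(a,b),\beta(x,y)\in K$ parametrized by $\cO_n\times\cO_n$ and checks by hand that $\alpha(a,b)\beta(x,y)\in U_j k_{\e^{-2j}\delta} U_j$ whenever $y-ax-b\equiv\delta\pmod{\e^{-n}}$. A bilinear pairing then transports the operator $A_{k_\delta}$ on $L_2(K;X)$ to an explicit averaging operator $S_{n,\delta}$ on $\ell_2(\cO_n\times\cO_n;X)$, and the analytic input is Bourgain's Hausdorff--Young inequality \eqref{eq:Hausdorff-Young} for finite abelian groups with nontrivial type (via two preparatory lemmas on the operators $S_{n,\chi}$ and $S_{n,\delta}$). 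Thus the congruence filtration replaces your Peter--Weyl decomposition, and the whole admissible case is handled in one pass. What the paper's approach buys is self-containment and an explicit finite-dimensional model; what yours buys is modularity, reusing the archimedean reduction verbatim.

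A few corrections to your sketch of step one: the reference \cite{MR2574023} is Lafforgue, not Liao; $U$ itself (not a ``$(2,3)$-block'' of it) is isomorphic to $\mathrm{GL}_2(\cO)$; and neither the Bruhat--Tits tree nor a Rademacher-series estimate appears in the argument---the combinatorics lives entirely on the finite rings $\cO_n$, and the analytic input is Bourgain's vector-valued Hausdorff--Young inequality, which is a \emph{consequence} of nontrivial type rather than the type inequality applied directly.
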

The starting point is the following consequence of the Hausdorff-Young inequality of Bourgain \cite{MR675400}, see \cite[Corollaire 2.2]{MR2574023}: there is $\varepsilon>0$ and $C>0$ such that for every finite abelian group $G$, every $X \in \mathcal E$ and every $f \colon G \to X$,
\begin{equation}\label{eq:Hausdorff-Young}
  \left(\sum_{\chi \in \hat G} \| \mathbb E_{s \in G} \chi(s) f(s)\|^2 \right)^{\frac 1 2} \leq  C (\#G)^{-\varepsilon}   \left(\sum_{s \in G} \| f(s)\|^2 \right)^{\frac 1 2}.
\end{equation}
Let us fix $\alpha \in (0,\varepsilon)$. We shall prove Proposition \ref{prop:estimates_for_T_delta_Banach_nonarchimedean} for this value of $\alpha$, by applying the preceding to the additive group of the residue rings $\cO_n:= \cO/\e^{-n}\cO$ for different values of $n$, in which case $C (\#G)^{-\varepsilon} = Cp^{-\varepsilon n}$. We start with the following consequence, which follows rather easily from Lemma 3.2 in \cite{MR2574023}.
\begin{lemma}\label{lemma:consequence_of_Bourgain} There is a constant $C'$ such that for every integer $h \geq 1$, every nontrivial character $\chi$ of the additive group of $\cO_h$ and every integer $n \geq h$, the operator $S_{n,\chi} \in B(\ell_2(\cO_n \times \cO_n))$ defined by
  \[ S_{n,\chi} f(y,t) = \mathbb E_{x \in \cO_n,z \in \cO_h} \chi(z) f(x,t+\e^{h-n}z+xy)\]
  satisfies
  \[ \|S_{n,\chi}\|_{B(\ell_2(\cO_n \times \cO_n;X))} \leq C' p^{-\alpha (n-h)}.\]
  for every $X \in \mathcal E$.
\end{lemma}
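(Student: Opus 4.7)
The strategy is to exploit the nontriviality of $\chi$ to force the relevant Fourier modes in the $t$-variable to have large depth, and then apply Bourgain's Hausdorff-Young inequality \eqref{eq:Hausdorff-Young} on a cyclic group of size at least $p^{n-h+1}$. I would begin by factoring $S_{n,\chi} = M \circ V$, where
\[ Vg(x, t) = \mathbb{E}_{z \in \cO_h} \chi(z)\, g(x, t + \e^{h-n}z), \qquad Mf(y, t) = \mathbb{E}_{x \in \cO_n} f(x, t + xy). \]
Both $M$ and $V$ are contractions on $\ell_2(\cO_n \times \cO_n; X)$, so individually they give only the trivial bound $\|S_{n,\chi}\| \leq 1$; the gain must come from their interaction.

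The operator $V$ is a Fourier projection in the $t$-variable onto the subspace of functions whose Fourier support is contained in the set of characters $\xi \in \widehat{\cO_n}$ satisfying $\xi(\e^{h-n}\cdot) = \chi$. Writing $\xi(t) = \omega_0(at)$ for a fixed primitive additive character $\omega_0$ of $\cO_n$, the nontriviality of $\chi$ forces $v_p(a) \leq h-1$; equivalently $\xi$ has depth $\ell_0 \geq n-h+1$ as a character of $\cO_n$. For each such $\xi$, the Fourier-side action of $M$ on the mode $\xi$ is $v \mapsto T_\xi v$ with $T_\xi v(y) = \mathbb{E}_{x \in \cO_n} \xi(xy) v(x)$. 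Since $\xi(xy)$ depends on $(x, y)$ only modulo $p^{\ell_0}$, the operator $T_\xi$ reduces after averaging over fibers and rescaling by the unit $u = a / p^{n-\ell_0}$ to a Fourier transform on the cyclic group $\cO_{\ell_0}$ of size $p^{\ell_0} \geq p^{n-h+1}$. Bourgain's inequality \eqref{eq:Hausdorff-Young} applied on this group yields
\[ \Bigl(\mathbb{E}_y \|T_\xi v(y)\|^2\Bigr)^{1/2} \leq C\, p^{-\varepsilon \ell_0} \Bigl(\mathbb{E}_x \|v(x)\|^2\Bigr)^{1/2} \leq C\, p^{-\varepsilon(n-h+1)} \|v\|_{\ell_2(\cO_n; X)}. \]

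The remaining task is to assemble these per-mode estimates into a uniform norm bound on $M V$. In the Hilbert case this is immediate from Plancherel in $t$, but in a general Banach space the identity $\mathbb{E}_t \|f(x,t)\|^2 = \sum_\xi \|\hat f(x,\xi)\|^2$ fails, and this is the principal obstacle. It is precisely what Lemma 3.2 of \cite{MR2574023} is designed to bypass: that lemma reformulates the above Bourgain estimate as a composable operator inequality on $\ell_2(\cO_n \times \cO_n; X)$ for $X$ of nontrivial type, rather than as a mode-by-mode summation. Invoking it with depth parameter $\ell_0 \geq n-h+1$ and absorbing the difference between $\varepsilon$ and $\alpha$ into the constant yields the desired bound $\|S_{n,\chi}\|_{B(\ell_2(\cO_n \times \cO_n; X))} \leq C' p^{-\alpha(n-h)}$ for every $\alpha \in (0, \varepsilon)$.
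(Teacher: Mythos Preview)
Your Fourier-analytic setup is correct: the factorization $S_{n,\chi}=MV$, the identification of $V$ as the projection onto characters $\xi$ with $\xi(\e^{h-n}\cdot)=\bar\chi$, the depth bound $\ell_0\geq n-h+1$, and the per-mode estimate $\|T_\xi\|_{B(\ell_2(\cO_n;X))}\leq C p^{-\varepsilon\ell_0}$ via Bourgain on $\cO_{\ell_0}$ are all fine. The gap is precisely where you locate it and then dismiss it: the assembly of the mode-wise bounds into an operator bound on $\ell_2(\cO_n\times\cO_n;X)$.

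Your appeal to Lemma~3.2 of \cite{MR2574023} does not do what you need. That lemma is not a device for summing per-mode estimates without Plancherel; rather, for a \emph{non-degenerate} character $\chi$ of $\cO_h$, it factors $S_{n,\chi}$ as a composition of roughly $n/h$ operators, each of which is controlled (after isometries) by Bourgain's bound on the small group $\cO_h$. This yields $\|S_{n,\chi}\|\leq (Cp^{-\varepsilon h})^{n/h-1}$, and the iterated constant $C^{n/h-1}$ is only harmless once $Cp^{-\varepsilon h}\leq p^{-\alpha h}$, i.e.\ once $h\geq h_0$. That is why the paper's proof cannot avoid the three-case analysis: large $h$ (direct application), small $h$ (lift $\chi$ to a non-degenerate character of $\cO_{h_0}$ and sum over the fiber), and degenerate $\chi$ (reduce to a non-degenerate character of some $\cO_d$ with $d\leq h$). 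Your single application of Bourgain on $\cO_{\ell_0}$ would indeed give the cleaner bound $Cp^{-\varepsilon(n-h+1)}$ \emph{on each Fourier mode}, but there is no mechanism in \cite{MR2574023} that converts this into the same bound on the whole operator; the set $S_\chi$ of relevant modes has size $p^{n-h}$, and neither the triangle inequality nor any type-based substitute for Plancherel recovers the loss.
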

\begin{proof} Define $h_0$ as the smallest integer satisfying $(C p^{-\varepsilon h_0}) \leq p^{-\alpha h_0}$. We consider three cases.

Case 1: $\chi$ is non-degenerate, that is $\chi$ is not trivial on the subgroup $\e^{-h+1}\cO/\e^{-h}\cO \subset \cO_h$, and $h \geq h_0$. In that case, Lemma 3.2 in \cite{MR2574023} applies, and together with \eqref{eq:Hausdorff-Young}, shows that
\[ \|S_{n,\chi}\|_{B(\ell_2(\cO_n \times \cO_n;X))} \leq (C p^{-\varepsilon h})^{\frac{n}{h}-1}
  \leq  p^{-\alpha (n-h)},\]
  where the last inequality holds because $(C p^{-\varepsilon h}) \leq p^{-\alpha h}$ (we assumed $h \geq h_0$).

  Case 2: $1 \leq h < h_0$ and $\chi$ is non-degenerate. The homomorphism $z \in \cO_h \mapsto \e^{h-h_0}z \in \cO_{h_0}$ induces a surjective $q \colon \hat \cO_{h_0} \to \hat \cO_h$, and for every $\widetilde z \in \cO_{h_0}$, the orthogonality of characters implies that 
  \[ \sum_{\widetilde \chi \in q^{-1}(\chi)} \widetilde \chi(\widetilde z) = \left\{ \begin{array}{cc}p^{h_0-h}\chi(z)&\textrm{if }\widetilde z=\e^{h-h_0} z \in \e^{h-h_0} \cO_h\\0&\textrm{otherwise.}\end{array}\right.\]
As a consequence we have $S_{n,\chi} = \sum_{\widetilde \chi \in q^{-1}(\chi)} S_{n,\widetilde \chi}$, and moreover every character in $q^{-1}(\chi)$ is non-degenerate. Taking into account Case 1, we obtain
  \[ \|S_{n,\chi}\|_{B(\ell_2(\cO_n \times \cO_n;X))} \leq p^{h_0-h} p^{-\alpha (n-h_0)} = p^{(1+\alpha)(h_0-h)}p^{-\alpha(n-h)} \textrm{ for every }X\in \mathcal E.\]

  To summarize, we have proven the lemma with $C'=p^{(1+\alpha)(h_0-1)}$ under the additional restriction that $\chi$ is non-degenerate.

  In the general case, let $d \geq 1$ be the largest integer such that $\chi$ is trivial on $\e^{-d} \cO/e^{-h}\cO$. Then $\chi$ induces a non-degenerate character $\chi'$ of $\cO_d$, and one checks that $S_{n,\chi} = i\circ S_{n-h+d), \chi'} \circ P$ where $i\colon \ell_2(\cO_{n-h+d}\times \cO_{n-h+d}) \to \ell_2(\cO_n\times\cO_n)$ is the natural isometric embedding and $P$ is the orthogonal projection.
  This shows
  \[ \|S_{n,\chi}\otimes \mathrm{id}_X\|\leq  \|S_{n-h+d,\chi'}\mathrm{id}_X\| \leq C' p^{-\alpha(n-h+d-d)} = C'p^{-\alpha(n-h)}\]
and proves the lemma.
  \end{proof}
Now for an integer $n$ and $\delta \in \cO_n$, we define the operator $S_{n,\delta}$ on $\ell_2(\cO_n \times \cO_n)$ by
\[ S_{n,\delta} f(y,t) = \mathbb E_{x \in \cO_n} f(x,t+\delta+xy).\]
We deduce
\begin{lemma}\label{lemma:inequality_for_Sndelta} For every integer $h$, there is a constant $C(h)$ such that for every $n \geq h$ and $\delta,\delta' \in \e^{h-n}\cO_h$,
  \[ \|S_{n,\delta} - S_{n,\delta'}\|_{B(\ell_2(\cO_n\times\cO_n;X))} \leq C(h) p^{-\alpha(n-h)}\]
  for every $X \in \mathcal E$.
\end{lemma}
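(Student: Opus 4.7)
The plan is to deduce this lemma from Lemma \ref{lemma:consequence_of_Bourgain} by a direct Fourier decomposition on the finite abelian group $\cO_h$. Since $\delta,\delta' \in \e^{h-n}\cO_h$, we can write $\delta = \e^{h-n}z_0$ and $\delta' = \e^{h-n}z_0'$ for uniquely determined $z_0,z_0' \in \cO_h$. The key observation is that the operators $S_{n,\chi}$ of Lemma \ref{lemma:consequence_of_Bourgain} are precisely the Fourier coefficients (in the variable $z \in \cO_h$) of the family of operators $(S_{n,\e^{h-n}z})_{z \in \cO_h}$.

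By orthogonality of characters on $\cO_h$, for any scalar-valued function $g$ on $\cO_h$ we have
\[ g(z_0) = \sum_{\chi \in \hat{\cO_h}} \overline{\chi(z_0)}\, \mathbb{E}_{z \in \cO_h}\, \chi(z) g(z).\]
Apply this with $g(z) = f(x, t + \e^{h-n}z + xy)$ for fixed $x,y,t$ and then average over $x \in \cO_n$: this yields
\[ S_{n,\delta} = \sum_{\chi \in \hat{\cO_h}} \overline{\chi(z_0)}\, S_{n,\chi},\]
and similarly for $\delta'$. Subtracting, the contribution of the trivial character $\chi_0$ cancels since $\overline{\chi_0(z_0)} = \overline{\chi_0(z_0')} = 1$, and we are left with
\[ S_{n,\delta} - S_{n,\delta'} = \sum_{\chi \neq \chi_0} \bigl(\overline{\chi(z_0)} - \overline{\chi(z_0')}\bigr)\, S_{n,\chi}.\]

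It then suffices to apply the triangle inequality, bound $|\overline{\chi(z_0)} - \overline{\chi(z_0')}| \leq 2$, and use the estimate $\|S_{n,\chi}\|_{B(\ell_2(\cO_n\times\cO_n;X))} \leq C' p^{-\alpha(n-h)}$ from Lemma \ref{lemma:consequence_of_Bourgain} for each of the $p^h - 1$ nontrivial characters. This gives the lemma with $C(h) = 2(p^h-1) C'$. There is no real obstacle here: the genuine analytic content (the Bourgain--Hausdorff--Young inequality for Banach spaces of nontrivial type and its consequences for the oscillatory operators $S_{n,\chi}$) has already been absorbed into Lemma \ref{lemma:consequence_of_Bourgain}, and the present step is simply Fourier inversion on a finite abelian group.
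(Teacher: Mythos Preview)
Your proof is correct and follows essentially the same approach as the paper: both use Fourier inversion on $\cO_h$ to write $S_{n,\delta}-S_{n,\delta'}$ as a linear combination of the $S_{n,\chi}$ with coefficients $\overline{\chi(z_0)}-\overline{\chi(z_0')}$ (the paper phrases this as decomposing $\varphi=p^h(\delta_{z_0}-\delta_{z_0'})$ in the character basis), and then invoke Lemma~\ref{lemma:consequence_of_Bourgain} termwise. Your constant $C(h)=2(p^h-1)C'$ is even slightly sharper than the paper's $2C'p^h$.
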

\begin{proof} 
  Write $\delta=\e^{h-n}a$ and $\delta'=\e^{h-n} b$ with $a,b \in \cO_{n-h}$, and consider the function $\varphi =p^h(\delta_{a} - \delta_{b}) \colon \cO_h \to \R$. We can decompose $\varphi$ in the basis of characters $\varphi = \sum_{\chi \in \hat{\cO_h}} t_\chi \chi$. Since $\varphi$ has mean $0$, the trivial character does not appear in this decomposition, and it follows from the definitions that
  \[ S_{n,\delta}-S_{n,\delta'} = \sum_{\chi \in \hat{\cO_h}} t_\chi S_{n,\chi}.\]
  Lemma \ref{lemma:consequence_of_Bourgain} implies that
  \[  \|S_{n,\delta} - S_{n,\delta'}\|_{B(\ell_2(\cO_n\times\cO_n;X))} \leq C' p^{-\alpha(n-h)} \sum_\chi |t_\chi|.\]
  The lemma follows with $C(h) = 2C'p^h$ because $t_\chi = \chi(a) - \chi(b)$ has modulus $\leq 2$.
\end{proof}
When $\mathcal E$ is made of Hilbert spaces, a direct diagonalization of the operators $S_{n,\delta}$ show that Lemma \ref{lemma:inequality_for_Sndelta} holds with a constant $C(h)$ independant from $h$. We believe that this should hold for every class $\mathcal E$ of nontrivial type, but we could not prove it.

We now move to the proof of Proposition \ref{prop:estimates_for_T_delta_Banach_nonarchimedean}.
\begin{proof}[Proof of Proposition \ref{prop:estimates_for_T_delta_Banach_nonarchimedean}] By the very same argument as in the proof of \cite[Proposition 2.1]{dlMdlSAIF} (suitably adapted to replace Lie groups by totally disconnected groups), we could restrict to the case when $\mu_1=\mu_2$ is the Haar measure on $U$. But it does not require much more effort to provide the argument in the general case, we therefore do so. Let $\mu_1,\mu_2$ be two admissible probability measures on $U$, and for $k \in K$ denote by $A_k, B_k,C_k$ the following operators on $L_2(K)$:
\[ A_k=\iint_{U \times U} \lambda(u k u') d\mu_1(u) d\mu_2(u),\]
\[ B_k=\int_{U} \lambda(u k) d\mu_1(u),\]
\[ C_k=\iint_{U} \lambda(k u')d\mu_2(u),\]
so that $B_k C_{k'} = A_{kk'}$.

For every integer $j$, denote by $U_j$ the kernel of the reduction morphism $U \to \SL_3(\cO_j)$. Since $(U_j)_j$ forms a basis of neighbourhoods of the identity in $U$, every finite dimensional representation of $U$ is trivial on $U_j$ for all $j$ large enough, and therefore every admissible probability measure on $U$ is left and right-invariant under $U_j$ for all $j$ large enough. Fix $j$ such that this invariance holds for $\mu_1$ and $\mu_2$. So we have that $A_k = A_{uku'}$ for every $k \in K$ and $u,u' \in U_j$.

Let $n \geq j$. Denote by $x\in \cO_n \mapsto \sect x \in \cO$ any section. For $a,b,x,y \in \cO_n$ we define matrices in $K$ 
\[ \alpha(a,b) = \begin{pmatrix} 1 & -\e^{-j}\sect{a}& -\e^{-2j}\sect{b} \\ 0 & 0 &1\\ 0&-1&0 \end{pmatrix}\ ,\ \beta(x,y) = \begin{pmatrix} \e^{-2j}\sect{y} & -1 & 0 \\  \e^{-j}\sect{x} & 0 & -1 \\ 1&0&0 \end{pmatrix}.\]
Then 
\[ \alpha(a,b) \beta(x,y) =  \begin{pmatrix} \e^{-2j}(\sect{y}-\sect{a}\sect{x}-\sect{b}) & -1 & \e^{-j}\sect{a} \\ 1 & 0 &0 \\ -\e^{-j} \sect{x}&0&1 \end{pmatrix}.\]
If $\delta \in \cO$ is such that $|\delta| \geq p^{j-n}$, then for every $a,b,x,y \in \cO_n$ such that $y-ax-b=\delta+\e^{-n}\cO$ we have $\sect{y}-\sect{a}\sect{x}-\sect{b} \in \delta + \e^{-n}\cO$ and $\omega = \frac{\sect{y}-\sect{a}\sect{x}-\sect{b}}{\delta} \in 1+\e^{-j}\cO$. We have that $\alpha(a,b) \beta(x,y) \in U_j k_{\e^{-2j}\delta} U_j$ as the explicit complutation shows:
\[ \begin{pmatrix} \omega^{-1} & 0 &0\\0&1&0\\0&\e^{-j} \omega \sect{x}&\omega \end{pmatrix} \alpha(a,b) \beta(x,y) \begin{pmatrix} 1 & 0 &0\\0&\omega&\e^{-j} \omega^{-1} \sect{a}\\0&0&\omega^{-1} \end{pmatrix}=  k_{\e^{-2j}\delta}.\]
In particular, $A_{ \alpha(a,b) \beta(x,y)} = A_{k_{\e^{-2j}\delta}}$.

Consider unit vectors $\xi \in L_2(K;X)$ and $\eta \in L_2(K;X)^*$. Define $f \colon \cO_n \times \cO_n \to L_2(K;X)$ and $g \colon \cO_n \times \cO_n \to L_2(K;X)^*$ by $f(x,y) = B_{\beta(x,y)} \xi$ and $g(a,b) = C_{\alpha(a,b)}^* \eta$. Then the norm of $f$ in $\ell_2(\cO_n \times \cO_n;L_2(K;X))$ is less than $p^n$ and similarly for $g$. Moreover
\[ \langle f(x,y),g(a,b)\rangle = \langle C_{\alpha(a,b)} B_{\beta(x,y)}\xi,\eta\rangle = \langle A_{\alpha(a,b) \beta(x,y)} \xi,\eta \rangle .\]
For $\delta$ as above, denote $\overline{\delta}$ its image in $\cO_n$ and compute
\begin{eqnarray*} \langle S_{n,\overline{\delta}}f,g\rangle &=& \sum_{a,b \in \cO_n} \mathbb E_{x \in \cO_n} \langle f(x,b+\overline{\delta}+ax),g(a,b)\rangle\\
  & = & \sum_{a,b \in \cO_n} \mathbb E_{x \in \cO_n} \langle A_{ k_{\e^{-2j}\delta}}\xi,\eta\rangle\\
  &=& p^{2n} \langle A_{k_{\e^{-2j} \delta}}\xi,\eta\rangle.\end{eqnarray*}
We deduce that, for $\delta,\delta'$ of absolute value $\geq p^{j-n}$, 
\[ \langle (A_{k_{\e^{-2j}\delta}}-A_{k_{\e^{-2j}\delta'}})\xi,\eta\rangle = \langle (S_{n,\overline{\delta}} - S_{n,\overline{\delta'}})f/p^n,g/p^n \rangle.\]
Taking the supremum over $\xi,\eta$ we obtain for every Banach space $X$,
\[ \| A_{k_{\e^{-2j}\delta}}-A_{k_{\e^{-2j}\delta'}} \|_{B(L_2(K;X))} \leq \| S_{n,\overline{\delta}} - S_{n,\overline{\delta'}}\|_{B(\ell_2(\cO_n \times \cO_n;L_2(K;X)))}.\]
By Fubini this last quantity is equal to $\| S_{n,\overline{\delta}} - S_{n,\overline{\delta'}}\|_{B(\ell_2(\cO_n \times \cO_n;X))}$. From now on we assume $X \in \mathcal E$. We deduce from Lemma \ref{lemma:inequality_for_Sndelta} that if $n \geq h \geq j$ and $|\delta|,|\delta'|$ belong to $[p^{j-n},p^{h-n}]$ then
\[ \| A_{k_{\e^{-2j}\delta}}-A_{k_{\e^{-2j}\delta'}} \|_{B(L_2(K;X))} \leq C(h) p^{-\alpha(n-h)}.\]
In particular, if $|\delta|=p^{1+j-n}$ and $|\delta'|=p^{j-n}$, we can take $h=j+1$ and obtain
\[ \| A_{k_{\e^{-2j}\delta}}-A_{k_{\e^{-2j}\delta'}} \|_{B(L_2(K;X))} \leq C(j+1) |\delta|^\alpha.\]
Making $n$ vary, we see that the previous inequality holds for every $\delta,\delta'\in \cO$ satisfying $|\delta|/|\delta'|=p$. Now take $\delta,\delta' \in \cO$ with $0<|\delta'| \leq |\delta|$. Denote $|\delta|=p^{-m}$ and $|\delta'|=p^{-m'}$, so $m \leq m'$. Pick a sequence $\delta=\delta_0,\delta_1,\dots,\delta_{m-m'}=\delta'$ such that $|\delta_i|=p^{-(m+i)}$. Then we have
\begin{multline*} \| A_{k_{\e^{-2j}\delta}}-A_{k_{\e^{-2j}\delta'}} \|_{B(L_2(K;X))} \leq \sum_{i=1}^{m-m'} \| A_{k_{\e^{-2j}\delta_{i-1}}}-A_{k_{\e^{-2j}\delta_i}} \|_{B(L_2(K;X))} \\\leq \sum_{i=1}^{m-m'} C(j+1) p^{-\alpha(m+i-1)} \leq \frac{C(j+1)}{1-p^{-\alpha}} |\delta|^\alpha.\end{multline*}
Making $\delta' \to 0$ we obtain
\[ \| A_{k_{\e^{-2j}\delta}}-A_{k_{0}} \|_{B(L_2(K;X))} \leq \frac{C(j+1)}{1-p^{-\alpha}} |\delta|^\alpha.\]
This concludes the proof of Proposition \ref{prop:estimates_for_T_delta_Banach_nonarchimedean}.
\end{proof}
\subsection{Case of $\mathrm{Sp}_4(\F)$} The case of $\mathrm{Sp}_4(\F)$ for $\F=\R,\Q_p$ or $\mathbf F_p(\!(t)\!)$ proceeds in the same way as for $\mathrm{SL}_3(\F)$~: Lemma \ref{lem:properties_of_d} holds without any change, and again all amounts to proving Proposition \ref{prop:local_SL3}. And this is achieved by adapting the known proofs of strong property (T) \cite{liao,strongTsp4} in the same way as for $\mathrm{SL}_3$. Actually, the same strategy allows us, as in \cite{strongTsp4}, to prove Theorem \ref{thm:main} for the universal cover $\widetilde{\mathrm{Sp}}_4(\R)$. The same arguments also show the conclusion of Theorem \ref{thm:main_Banach_valued_nonarch} for $\mathrm{Sp}_4(\F)$ and of Theorem \ref{thm:main_Banach_valued_real} for $\mathrm{Sp}_4(\R)$ and $\widetilde{\mathrm{Sp}}_4(\R)$. We leave the details to the reader.

\subsection{Case of $\mathrm{SL}_{3n-3}(\R)$}\label{subsection:SLbigN} A particular case of the results of the next section is that $\mathrm{SL}_N(\R)$ satisfies property (*) for every $N\geq 3$, and more generally that it satisfies $(*_{\mathcal E})$ whenever $\mathrm{SL}_3(\R)$ does. The reason is that $\mathrm{SL}_3 \subset \mathrm{SL}_n$. This does not prove Theorem \ref{thm:main_Banach_valued_real} for $\mathrm{SL}_{3n-3}(\R)$ because it is unknown whether condition \eqref{eq:Tdeltan} implies \eqref{eq:Tdelta}. However, we can inject in the general argument presented in Subsection \ref{subsection:SL3R} the exploration process of the Weyl chamber of $\mathrm{SL}_{3n-6}$ that was obtained in \cite{dlMdlSAIF} using copies of $\mathbb{S}^{n-1}$ and obtain Theorem \ref{thm:main_Banach_valued_real} for $\mathrm{SL}_{3n-3}(\R)$. 

\section{Generalization to other higher rank groups}\label{sec:all_higher_rank_groups}

We now prove Theorem \ref{thm:main} for higher rank groups, using that we already know that it holds for $\mathrm{SL}_3$, $\mathrm{Sp}_4$ and $\widetilde{\mathrm{Sp}}_4$. By Lemma \ref{lem:direct_products} we can restrict to the case of higher rank \emph{simple} groups. The arguments are close to \cite[Section 4]{MR2423763} and \cite[\S 5]{liao}. We refer to these references for the missing details.

We start with the proof for the case when $G$ is a connected simple Lie group of real rank $\geq 2$.

By the classification of higher rank Lie algebras $G$ contains a closed subgroup $H$ whose Lie algebra is $\mathfrak{sl}_3$ or $\mathfrak{sp}_2$, see \cite[Proposition I.1.6.2]{MR1090825}. This means that $H$ is isomorphic to a finite extension of $\mathrm{SL}_3(\R)$ or $\mathrm{Sp}_2(\R)$, or to $\widetilde{\mathrm{Sp}}_4(\R)$. Since, as the reader can easily check, property (*) remains true if one replaces $H$ by a finite extension or by a quotient by a finite group, we know that the theorem holds for $H$: let $s(H),t(H)>0$ and $m_n$ be a sequence of probability measures supported on $\{h \in H, \ell(h) \leq n\}$ as in the theorem.

Let $a \in H$ be the exponential of a nonzero semisimple element $X$, and $\mathfrak g = \oplus_\lambda \mathfrak{g_\lambda}$ be the decomposition as eigenspaces for $\mathrm{Ad}(\mathfrak{a})$. In this way, for $Y \in \mathfrak{g}_\lambda$, $\mathrm{ad}(a)(Y) = e^{\lambda} Y$ and $a \exp(Y) a^{-1} = \exp( e^\lambda Y)$.

We shall prove that (*) holds for $(G,\ell)$ with the parameters $s,t=t(H)$ and the sequence $m_n$ (seen as probability measures on $G$) if $s \leq s(H)$ is small enough.

Let $\pi$ be as in (*) for $(G,\ell)$. Then since $s \leq s(H)$, we know that there is $P \in B(X_0,X_2)$ such that 
\begin{equation}\label{eq:alpha_nCauchy} \|\pi(m_n)-P\| \leq C e^{-nt}.\end{equation}
Moreover, if $\mu,\nu$ are probability measures on $G$ supported in $\{g,\ell(g) \leq R\}$ and $\{g,\ell(g) \leq R'\}$ respectively then by applying the preceding to $\widetilde \pi(\cdot) = \pi(\mu \cdot \nu)$ we obtain ${}_\mu P_\nu \in B(X_1,X_2)$ such that
\begin{equation}\label{eq:alpha_nCauchy_distorted} \|\pi(\mu  m_n  \nu)-{}_\mu P_\nu\| \leq C L^2 e^{s(R+R')-nt}.\end{equation}
We have to prove that ${}_\mu P_\nu=P$ for every $\mu,\nu$. By assumption, we know that ${}_\mu P_\nu=P$ if $\mu,\nu$ are supported in $H$. More generally ${}_\mu P_\nu$ only depends on the images of $\mu$ (resp. $\nu$) in $G/H$ (resp. in $H\backslash G$). Finally as in Lemma \ref{lemma:lsc} the map $(\mu,\nu) \mapsto {}_\mu P_\nu$ is lower-semicontinuous.

\begin{lemma}\label{lem:P_invariant_by_expglambda} The following holds if $s$ is small enough. Let $\mu,\nu$ be compactly supported measures on $G$. Then for every $\lambda \neq 0$ and $Y \in \mathfrak{g}_\lambda$,
  \[ {}_{\mu  \delta_{\exp(Y)}} P_\nu = {}_\mu P_\nu.\]
\end{lemma}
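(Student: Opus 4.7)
The plan is to adapt the classical Mautner trick to the two-step setting. First, without loss of generality I assume $\lambda < 0$: otherwise replace $a$ by $a^{-1}$, which is the exponential of the semisimple element $-X \in H$ and satisfies $\mathrm{Ad}(a^{-1})Y = e^{-\lambda} Y$ with $-\lambda < 0$. Set the ``contracted'' element $z_n := \exp(e^{n\lambda} Y)$; then $z_n \to e$ in $G$, and from $\mathrm{Ad}(a^n)Y = e^{n\lambda}Y$ one has $a^n \exp(Y) a^{-n} = z_n$, i.e.\
\[\exp(Y) = a^{-n} z_n a^n.\]

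The $H$-invariance recalled above the statement, ${}_{\mu' \delta_h} P_{\delta_{h'} \nu'} = {}_{\mu'} P_{\nu'}$ for $h, h' \in H$, immediately yields the variant ${}_{\mu' \delta_h} P_{\nu'} = {}_{\mu'} P_{\delta_{h^{-1}} \nu'}$. I would apply this with $h = a^n$ to transfer the tail of $\exp(Y) = a^{-n} z_n a^n$ to the $\nu$-side, obtaining
\[{}_{\mu \delta_{\exp(Y)}} P_\nu = {}_{\tilde\mu_n \delta_{z_n}} P_{\tilde \nu_n}, \qquad {}_\mu P_\nu = {}_{\tilde \mu_n} P_{\tilde \nu_n},\]
with $\tilde \mu_n := \mu \delta_{a^{-n}}$ and $\tilde \nu_n := \delta_{a^{-n}} \nu$. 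The problem reduces to showing
\[\lim_{n\to\infty} \bigl\|{}_{\tilde \mu_n \delta_{z_n}} P_{\tilde \nu_n} - {}_{\tilde \mu_n} P_{\tilde \nu_n}\bigr\|=0.\]

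Next, applying the quantitative Cauchy bound \eqref{eq:alpha_nCauchy_distorted} to the two-step representations $h \in H \mapsto \pi(\tilde \mu_n h \tilde \nu_n)$ and its $z_n$-translate (both with amplitude bound of the form $Le^{s(R_\mu + R_\nu + 2n\ell(a) + \ell(z_n))}$), I bound each of the two limits above by the corresponding $\pi(\tilde \mu_n \delta_{?} m_k \tilde \nu_n)$ up to error $CL^2 e^{s(R_\mu + R_\nu + 2n\ell(a) + \ell(z_n)) - tk}$. Choosing $k = k(n)$ so that $tk(n) - 3sn\ell(a) \to +\infty$ (possible provided $s < t/(3\ell(a))$, which is the ``$s$ small enough'' hypothesis), both approximation errors vanish, and it suffices to establish
\[\bigl\|\pi(\tilde \mu_n \delta_{z_n} m_{k(n)} \tilde \nu_n) - \pi(\tilde \mu_n m_{k(n)} \tilde \nu_n)\bigr\| \xrightarrow[n\to\infty]{} 0.\]
By the two-step relation, this difference factors as
\[\pi_1(\tilde \mu_n) \circ \bigl[\pi_0(\delta_{z_n} m_{k(n)} \tilde \nu_n) - \pi_0(m_{k(n)} \tilde \nu_n)\bigr],\]
where $\|\pi_1(\tilde \mu_n)\| \leq Le^{s(R_\mu + n\ell(a))}$. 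The Mautner-like step consists in showing that the second factor has norm $o(e^{-sn\ell(a)})$, by combining the strong continuity of $\pi_0$ at $e$ (which gives pointwise control on $\pi_0(z_n g)x - \pi_0(g)x$ as $n \to \infty$) with the approximate $H$-invariance of $m_{k(n)}$ coming from property~(*) for $H$, via a cutoff separating contributions of $g$ near $\tilde \nu_n$ from those supported deep inside $\mathrm{supp}(m_{k(n)})$.

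The main obstacle is precisely this last step: in the two-step setting, $\pi_0$ and $\pi_1$ are not representations, so the classical Mautner argument (which relies on the factorization $\pi(g_1 g_2) = \pi(g_1)\pi(g_2)$ and the uniform boundedness of $\pi(a^{-n})$ on the invariant subspace) does not transfer directly. One must instead carefully balance the exponential growth $e^{sn\ell(a)}$ coming from $\|\pi_1(\tilde \mu_n)\|$ against the decay $e^{-tk(n)}$ provided by property~(*) for $H$ and the smallness of $z_n$, which is exactly what the hypothesis ``$s$ small enough'' buys.
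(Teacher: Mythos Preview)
Your setup via the Mautner identity $\exp(Y)=a^{-n}z_n a^n$ and the use of $H$-invariance to shuttle powers of $a$ across the projection are correct, and this is indeed the spirit of the paper's argument. But there is a genuine gap at your final step, and the mechanism you propose for closing it will not work as stated.

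You need $\|\pi_0(\delta_{z_n} m_{k(n)} \tilde \nu_n) - \pi_0(m_{k(n)} \tilde \nu_n)\| = o(e^{-sn\ell(a)})$, i.e.\ an \emph{exponential rate}, and you invoke strong continuity of $\pi_0$ at $e$. Strong continuity gives no rate at all; it only says $\pi_0(z_n g)x - \pi_0(g)x \to 0$ for each fixed $g,x$, and not uniformly in $g$. Nor does the approximate $H$-invariance of $m_{k(n)}$ help here: $z_n \notin H$, so property~(*) for $H$ says nothing about left translation by $z_n$. Your proposed ``cutoff'' does not produce a quantitative estimate, and the final paragraph is really an admission that the argument is incomplete.

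The paper closes this gap by a different, measure-theoretic route. First, by lower-semicontinuity of $(\mu,\nu)\mapsto {}_\mu P_\nu$ one may assume $\nu$ has a compactly supported \emph{Lipschitz} density. Second, one uses a \emph{one-sided} shift only: with $\lambda>0$, compare ${}_{\mu\delta_{\exp(Y)a^{cn}}}P_\nu$ and ${}_{\mu\delta_{a^{cn}}}P_\nu$ for an integer parameter $c$ to be chosen. After applying the Cauchy bound \eqref{eq:alpha_nCauchy_distorted}, the remaining term is $\|\pi(\alpha_n)\|$ with $\alpha_n=\mu\,\delta_{\exp(Y)a^{cn}}m_n\nu - \mu\,\delta_{a^{cn}}m_n\nu$. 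Instead of operator estimates on $\pi_0$, one bounds $\|\pi(\alpha_n)\|\leq L^2 e^{s R_n}\|\alpha_n\|_{TV}$ and controls the total variation: writing $w_n=a^{-cn}\exp(Y)a^{cn}=\exp(e^{-\lambda cn}Y)$ and commuting through $m_n$, one has $\|\alpha_n\|_{TV}\leq \sup_{\ell(g)\leq n}\|\delta_{g^{-1}w_n g}\nu-\nu\|_{TV}$. Since $\mathrm{Ad}(g)$ is $e^{C\ell(g)}$-Lipschitz near $e$, the conjugate $g^{-1}w_n g$ is at distance $O(e^{(C-\lambda c)n})$ from $e$; choosing $c$ with $\lambda c>C$ and using the Lipschitz density of $\nu$ gives $\|\alpha_n\|_{TV}=O(e^{(C-\lambda c)n})$, which beats the polynomial-in-$n$ growth of $e^{sR_n}$ once $s$ is small enough. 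The two ingredients you are missing are thus the reduction to regular $\nu$ and the total-variation estimate via the extra parameter $c$; strong continuity of $\pi_0$ plays no role.
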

%% \begin{lemma} More generally: Let $\mu,\nu$ be compactly supported measures on $G$. Then for every $\lambda \neq 0$ and $Y \in \mathfrak{g}_\lambda$,
%%   \[ {}_{\mu  \delta_{\exp(Y)}} P_\nu = {}_\mu P_\nu.\]
%% \end{lemma}

\begin{proof} Since $\nu \mapsto \|{}_{\mu  \delta_{\exp(Y)}} P_\nu - {}_{\mu} P_\nu\|$ is lower semicontinuous, we can restrict the proof to the case when $\nu$ belongs to some dense subset. So we can assume that $\nu$ is absolutely continuous with respect to the Haar measure with a compactly supported and Lipschitz Radon-Nikodym derivative. By replacing $a$ by $a^{-1}$ (\emph{i.e.} $X$ by $-X$) we can also assume that $\lambda>0$. Let $c$ be an integer, the value of which will be determined at the end of the proof. From the preceding discussion, we know that ${}_{\mu  \delta_{\exp(Y)}} P_\nu = {}_{\mu  \delta_{\exp(Y)a^{cn}}} P_\nu$ and ${}_{\mu} P_\nu = {}_{\mu  \delta_{a^{cn}}} P_\nu$. Applying \eqref{eq:alpha_nCauchy_distorted} with $\mu$ replaced by $\mu  \delta_{\exp(Y) a^{cn}}$ and $\mu  \delta_{a^{cn}}$ we obtain by the triangle inequality
  \begin{multline*} \| {}_{\mu  \delta_{\exp(Y)}} P_\nu - {}_\mu P_\nu \| \leq 2CL^2 e^{s(R+R'+cn\ell(a)+\ell(\exp(Y)))-nt} \\ + \|\pi(\mu \delta_{exp(Y) a^{cn}} m_n \nu - \mu \delta_{a^{cn}} m_n \nu)\|.\end{multline*}
  The first term goes to zero as $n \to \infty$ if $s<\frac{t}{c\ell(a)}$.
  
  Let us bound the second term. Let $\alpha_n$ denote the measure $\mu \delta_{exp(Y) a^{cn}} m_n \nu - \mu \delta_{a^{cn}} m_n \nu$. Then $\alpha_n$ is supported in $\{g \in G| \ell(g) \leq R_n\}$ for some $R_n \leq C' +n(1+\ell(a))$. Therefore, $\|\pi(\alpha_n)\| \leq CL^2e^{sR_n} \|\alpha_n\|_{TV}$. But by the triangle inequality, the total variation norm $\|\alpha_n\|_{TV}$ of $\alpha_n$ is less than
  \[ \sup_{\ell(g) \leq n} \|\delta_{g^{-1}a^{-cn}\exp(Y)a^{cn}g}\nu - \nu\|_{TV}.\]
  However $a^{-cn}\exp(Y)a^{cn} = \exp(e^{-\lambda cn}Y)$ because $Y \in \mathfrak{g}_\lambda$. Let $C>0$ be such that $\mathrm{ad}(g)$ is $e^C$-Lipschitz on the $1$-neighbourhood of the identity in $G$ for every $g$ with $\ell(g) \leq 1$. For $\ell(g) \leq n$,  $\mathrm{ad}(g)$ is $e^{nC}$-Lipschitz on the $e^{-C(n-1)}$-neighbourhood of the identity, so that in particular is $c$ is large enough (namely such that $C-\lambda c<0$) we have that $g^{-1}a^{-cn}\exp(Y)a^{cn}g$ is at distance $O(e^{(C-\lambda c)n})$ from the identity. Remembering that  $\nu$ has a compactly supported and Lipschitz Radon-Nikodym derivative, we obtain
\[\sup_{\ell(g) \leq n} \|\delta_{g^{-1}a^{-cn}\exp(Y)a^{cn}g}\nu - \nu\|_{TV} = O(e^{(C-\lambda c)n}).\]
One deduces that $\|\pi(\alpha_n)\|$ goes to zero if $e^{C+s(1+\ell(a))-\lambda c}<1$. To conclude, if $c$ is chosen so that $\lambda c > C$ for every $\lambda \neq 0$ in the sectrum of $ad(a)$, then $\| {}_{\mu  \delta_{\exp(Y)}} P_\nu - {}_\mu P_\nu \|=0$ provided that $s \leq \min (\frac{\lambda c - C}{1+\ell(a)},\frac{t}{c\ell(a)})$.
\end{proof}

We can now conclude the proof of the theorem, for the value of $s \leq s(H)$ given by the preceding lemma. Clearly, the set of elements of $G$ such that ${}_{\mu  \delta_{g}} P_\nu = {}_\mu P_\nu$ for every compactly supported measures $\mu,\nu$ on $G$ is a group. Lemma \ref{lem:P_invariant_by_expglambda} shows that this group contains the group generated by $\cup_{\lambda \neq 0} \exp(\mathfrak{g}_\lambda)$, which is the whole group $G$. Similarly we have that ${}_\mu P_{\delta_g \nu}={}_\mu P_\nu$ for every $g \in G$. In particular we have ${}_{\delta_{g}} P_{\delta_{g'}}= P_{\delta_{g'}} = P$ for every $g,g' \in G$. This proves the theorem.

%When $G$ is a real connected simple Lie group with real rank $\geq 2$ but $G$ is not algebraic, the same proof works. See also \cite[Subsection 6.2]{strongTsp4} for details.

%Lemma \ref{lem:direct_products} implies Theorem \ref{thm:main} for arbitrary higher rank groups, that for finite products of real connected simple Lie groups with real rank $\geq 2$ and of almost $F$-simple algebraic groups with $F$-split rank $\geq 2$ for local fields $F$.

Consider now a non-archimedean local field $\F$ and an almost $\F$-simple algebraic group $G$ with $\F$-split rank $\geq 2$. We can assume that $F=\Q_p$ or $\mathbf F_p(\!(t)\!)$ for some prime number $p$. Indeed, if $F$ is a finite extension of $F'$ then $G$ is isomorphic to an almost $F'$-simple algebraic group with $F'$-split rank $\geq 2$. Moreover, replacing $G$ by a finite extension, we can assume that $G$ is simply connected as an algebraic group over $F$ (Lemma 5.5 in \cite{liao}). In that case, by Lemma 5.3 and 5.4 in \cite{liao}, we are in the same situation as in the real case and the rest of the proof applies with no change.

\begin{rem}\label{rem:proof_other_alggroups_Banach} The preceding argument shows more generally that if a higher rank \emph{simple} group $G$ contains a group locally isomorphic to a group with property $(*_{\mathcal E})$, then $G$ also satisfies $(*_{\mathcal E})$. Together with Remarks \ref{rem:proof_SL3F_Banach} (resp. \ref{rem:proof_SL3R_Banach} and Subsection \ref{subsection:SLbigN}) this proves Theorem \ref{thm:main_Banach_valued_nonarch} (resp. \ref{thm:main_Banach_valued_real}) for higher rank groups.
\end{rem}
\begin{rem}\label{rem:nonsemisimple} The above proof for simple Lie groups shows the following more general fact. Let $H$ be a closed subgroup of a connected Lie group $G$, with Lie algebras $\mathfrak{h} \subset \mathfrak{g}$. Assume that $H$ has (*) (respectively $(*_{\mathcal E})$), and that $\mathfrak g$ is the smallest Lie subalgebra containing $\mathfrak{h}$ and
  \[ \{Y \in \mathfrak{g} : \exists X \in \mathfrak{h}, [X,Y]=Y\}.\]
  Then $G$ has (*) (respectively $(*_{\mathcal E})$).

  For example, $\SL(n,\R) \ltimes \R^n$ has (*) for $n \geq 3$, and even $(*_{\mathcal E})$ for every class ${\mathcal E}$ for which $\SL(n,\R)$ has $(*_{\mathcal E})$ .
\end{rem}

\section{Passing to lattices}\label{sec:lattices}

\subsection{Facts on lattices}
We collect two facts on lattices.

The first one is a celebrated theorem from \cite{MR1828742} which asserts that the embedding of a lattice in a higher rank group is a bilipschitz map (we call such a lattice \emph{undistorted}).
\begin{thm}(Lubotzky--Mozes--Raghunathan) \label{thm:LMR} Let $G$ be a higher rank group with word-length $\ell_G$ with respect to some compact generating set, and $\Gamma \subset G$ be a lattice with word-length $\ell_\Gamma$ with respect to some finite generating set. There is $C>0$ such that for every $\gamma \in \Gamma$,
  \[ \ell_\Gamma(\gamma) \leq C \ell_G(\gamma).\]
  \end{thm}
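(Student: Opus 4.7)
Since the statement is the Lubotzky–Mozes–Raghunathan theorem itself, the plan is to follow their strategy rather than to give an independent argument. First I would reduce to the case where $\Gamma$ is an $S$-arithmetic lattice in a simply connected absolutely almost simple algebraic group over a global field, using Margulis arithmeticity; this is legitimate because passing to a commensurable subgroup changes $\ell_\Gamma$ only up to a bi-Lipschitz constant, and higher-rank lattices are arithmetic. In that setting one has, by work of Matsumoto, Raghunathan and Tits, a finite generating set of $\Gamma$ consisting of root-subgroup elements $x_\alpha(c)$ with $c$ in a bounded portion of $\cO_S$. The goal then reduces to showing: for each $\gamma \in \Gamma$ with $\ell_G(\gamma) \le n$, one can write $\gamma$ as a word of length $O(n)$ in these generators.

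The core technical step, and the heart of LMR, is an \emph{exponential compression} argument. Using the $KAK$ decomposition one may replace $\gamma$ (up to a bounded correction) by an element whose unipotent coordinates have size at most $e^{O(n)}$. It then suffices to produce, for each root $\alpha$, a word of length $O(n)$ in the standard generators that represents $x_\alpha(t)$ whenever $|t| \le e^n$. The trick is the Chevalley commutator formula
\[
  [x_\beta(a),\, x_\gamma(b)] = x_{\beta+\gamma}(\pm ab) \cdot (\text{higher-order terms}),
\]
combined with conjugation by torus elements $h_\delta(u)$ for $u \in \cO_S^\times$. A fundamental $S$-unit $u$ with $|u|_v > 1$ at some place $v \in S$ allows one to realise $x_\alpha(u^k c_0)$ as a conjugate of $x_\alpha(c_0)$, so iterated commutators and conjugations compress an exponent of size $e^n$ to a word of length $O(n)$. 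The split rank $\geq 2$ hypothesis enters twice: it provides a root $\alpha$ expressible as $\beta+\gamma$ with $\beta,\gamma$ roots (so the commutator trick is non-trivial), and via Dirichlet's $S$-unit theorem it provides enough units for the conjugation step. One also needs to handle torus elements, which is done by the Steinberg identity $w_\alpha(t) = x_\alpha(t) x_{-\alpha}(-t^{-1}) x_\alpha(t)$ together with the same compression.

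The main obstacle is the careful arithmetic bookkeeping in the compression step: one has to manage the lower-order terms in the Chevalley relations, ensure they appear with controlled exponents, and iterate without blowing up the word length. In positive characteristic (function field $\F_q(\!(t)\!)$) there is the additional subtlety that $\cO_S^\times$ and the reduction theory of Harder behave differently from the number-field case, and one must work harder to produce the required units and generators. In rank one the whole strategy collapses, as it must: $\SL_2(\Z)$ is exponentially distorted in $\SL_2(\R)$. Since all of this is carried out in complete detail in \cite{MR1828742}, I would simply invoke their result.
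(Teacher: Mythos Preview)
Your outline is a reasonable sketch of the original Lubotzky--Mozes--Raghunathan argument, and for lattices in products of almost simple algebraic groups over local fields simply citing \cite{MR1828742} is indeed all that is needed. The gap is that the paper's notion of \emph{higher rank group} is strictly broader than what \cite{MR1828742} covers: it includes real connected simple Lie groups that are not algebraic, such as the universal cover $\widetilde{\mathrm{Sp}}_{2n}(\R)$, which has infinite center. Your reduction ``via Margulis arithmeticity to an $S$-arithmetic lattice'' and the remark that commensurability is harmless do not by themselves handle this case: one also has to change the ambient group $G$, and then control how both $\ell_G$ and $\ell_\Gamma$ transform under that change.

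The paper's proof is precisely this missing reduction. One passes to $G' = G/Z(G)$, shows (Lemma \ref{lem:image_of_lattice_lattice}) that the image $\Gamma'$ of $\Gamma$ is a lattice in $G'$, and then applies \cite{MR1828742} as a black box to $\Gamma' \subset G'$, which is now a product of algebraic groups. The comparison between $(G,\ell_G)$, $(\Gamma,\ell_\Gamma)$ and their centerless counterparts is made via the Guichardet--Wigner theorem \cite{MR510552} that the central extension $G \to G'$ is given by a bounded $2$-cocycle, so that $G$ is quasi-isometric to $G' \times Z(G)$ and $\Gamma$ to $\Gamma' \times Z(G)$ in a compatible way. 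This is the step your proposal omits; without it the statement is not established for groups like $\widetilde{\mathrm{Sp}}_{2n}(\R)$.
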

The results in \cite{MR1828742} do not formally include the preceding statement, as they do not include the non-algebraic groups (as $\widetilde{\mathrm{Sp}}_{2n}(\R)$). However the general case follows, as in \cite[Section 7]{strongTsp4}, from the following Lemma which is certainly well-known.
\begin{lemma}\label{lem:image_of_lattice_lattice} Let $G$ be a higher rank group, with center $Z(G)$ and $\Gamma \subset G$ be a lattice. The image of $\Gamma$ in $G/Z(G)$ is a lattice in $G/Z(G)$.
\end{lemma}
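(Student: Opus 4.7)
The plan is to verify that $\bar\Gamma := \pi(\Gamma) \subset \bar G := G/Z(G)$ is discrete and has finite covolume, where $\pi : G \to \bar G$ is the quotient map. First I observe that $Z(G)$ is itself discrete in $G$, because any connected subgroup of the center of a connected semisimple Lie group is trivial. Hence $\pi$ is a covering map and both conclusions will follow once I establish (a) $\Gamma Z(G)$ is closed in $G$ and (b) $\Gamma \cap Z(G)$ has finite index in $Z(G)$: then $\bar G/\bar\Gamma \simeq G/(\Gamma Z(G))$ is the quotient of the finite-volume space $G/\Gamma$ by the finite group $Z(G)/(Z(G) \cap \Gamma)$, which inherits a finite $\bar G$-invariant measure.

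For (a) I would analyze $H := \overline{\Gamma Z(G)}$ and its identity component $H^0$. Since $Z(G)$ is central, $\Gamma$ normalizes $\Gamma Z(G)$, hence $H$, and hence $H^0$. By the Borel density theorem, $\mathrm{Ad}(\Gamma)$ is Zariski-dense in the algebraic adjoint group $\mathrm{Ad}(G) = \bar G$, so after passing through the adjoint representation the normalizer of $H^0$---being Zariski-closed and containing the dense set $\mathrm{Ad}(\Gamma)$---must equal all of $\bar G$. Lifting back to $G$ this says that $H^0$ is a connected normal subgroup of $G$. Since each higher rank simple factor of $G$ has no proper non-trivial connected normal subgroups, $H^0$ is either trivial (in which case $\Gamma Z(G) = H$ is discrete and closed) or a non-trivial product of simple factors of $G$.

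The main obstacle, and where I expect the argument to be most delicate, is ruling out the latter ``dense'' possibility. If $H^0$ is non-trivial, then $\Gamma Z(G)$ is dense in a non-trivial normal factor of $G$, and consequently the $Z(G)$-orbit of the coset $e\Gamma$ in $G/\Gamma$ is dense. I plan to contradict this using Moore's ergodicity theorem applied to a one-parameter subgroup through a generator $z_0 \in Z(G)$ (which is non-compact in $G$ because $Z(G)$ is discrete but intersects this one-parameter subgroup in a discrete unbounded set), or, alternatively, by invoking the Margulis normal subgroup theorem applied to the central normal subgroup $\Gamma \cap Z(G) \trianglelefteq \Gamma$; both tools encode that the lattice property of $\Gamma$, combined with higher rank, is incompatible with such density. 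For (b), the case of finite $Z(G)$ is automatic; when $Z(G)$ is infinite---which in the higher rank setting occurs only for universal covers such as $\widetilde{\mathrm{Sp}}_{2n}(\R)$ with $Z(G)\cong \Z$---every non-trivial subgroup of $Z(G)$ has finite index, so (b) reduces to showing $\Gamma \cap Z(G) \neq \{e\}$, which falls out of the same dense-orbit versus finite-volume tension used to close (a).
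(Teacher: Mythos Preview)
Your approach differs genuinely from the paper's and carries a real gap at the step you yourself flag as delicate. Both proofs use Borel density, but the paper points it at a sharper target: rather than analysing $\overline{\Gamma Z(G)}$, it shows directly that the \emph{centralizer} $C_G(\Gamma)$ equals $Z(G)$ --- because the image of $C_G(\Gamma)$ in each adjoint factor $G_i/Z(G_i)$ centralizes the Zariski-dense image of $\Gamma$ and hence is trivial. The key step you are missing is then elementary: since $\Gamma$ is finitely generated (by property~(T)), conjugation by elements of $N_G(\Gamma)$ near the identity fixes each generator, so $C_G(\Gamma)$ is \emph{open} in $N_G(\Gamma)$; discreteness of $C_G(\Gamma)=Z(G)$ therefore forces $N_G(\Gamma)$ to be discrete, and $\Gamma Z(G)\subset N_G(\Gamma)$ is discrete. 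No case analysis, no Moore, no Margulis.

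In your route, showing $H^{0}$ is normal is fine, but neither of your proposed ways to exclude $H^{0}\neq\{e\}$ works as stated. Moore--Howe ergodicity of a one-parameter subgroup controls \emph{almost every} orbit in $G/\Gamma$, not the specific $Z(G)$-orbit of $e\Gamma$; a countable dense orbit in a finite-volume space is not in itself a contradiction (and the version of Moore for groups with infinite center needs care anyway). Applying the Margulis normal subgroup theorem to $\Gamma\cap Z(G)$ only tells you this intersection is finite, which does not by itself prevent $\overline{\Gamma Z(G)}$ from being large. Your step~(b) also needs repair: when $G$ has several factors with infinite center, $Z(G)$ can be $\Z^{k}$ with $k\geq 2$, and non-trivial subgroups need not have finite index. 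That said, (b) is in fact a formal consequence of (a): a discrete subgroup containing a lattice must contain it with finite index, since otherwise a fundamental domain for the overgroup would have measure zero, contradicting positivity of Haar measure on $G$.
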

\begin{proof} Write $G=\prod_{i \in I}G_i$ the decomposition of $G$ into finitely many simple pieces: each $G_i$ is either a connected simple Lie group of real rank $\geq 2$, or a connected almost $\F_i$-simple algebraic group of $\F_i$-split rank $\geq 2$ over a local field $\F_i$. 

$\Gamma$ preserves a probability measure on $G/Z(G)$, so we have to prove that the image of $\Gamma$ in $G/Z(G)$ is discrete, or equivalently that $\Gamma Z(G)$ is discrete in $G$. For this we prove that the centralizer $C$ of $\Gamma$ coincides with $Z(G)$, and in particular is discrete. This is enough: by property (T), $\Gamma$ is finitely generated, and so discreteness of its centralizer is equivalent to discreteness of its normalizer (which contains $\Gamma Z(G)$).

  For each $i \in I$, the group $G_i/Z(G_i)$ is a linear algebraic group over $F_i$, and the image in it of $C$ centralizes the Zariski closure of the image of $\Gamma$, which is $G_i/Z(G_i)$ by Theorem II.2.5 and Lemma II.2.3 in \cite{MR1090825}. Since $G_i/Z(G_i)$ is centerless, this implies that the image of $C$ in $G_i/Z(G_i)$ is trivial. This proves that $C \subset \prod_{i \in I} Z(G_i)=Z(G)$ and the Lemma.
  \end{proof}
\begin{proof}[Proof of Theorem \ref{thm:LMR}] Let $G$ be a higher rank group and $\Gamma \subset G$ a lattice. Let $G'=G/Z(G)$ and $\Gamma' \subset G'$ be the image of $\Gamma$ in the quotient. Denote by $\ell,\ell',\ell_\Gamma,\ell_{\Gamma'},\ell_Z$ the word-length functions on $G,G',\Gamma,\Gamma',Z(G)$ with respect to some compact generating sets.  By Lemma \ref{lem:image_of_lattice_lattice}, $\Gamma'$ is a lattice in $G'$, and so $\Gamma$ has finite index in the preimage in $G$ of $\Gamma$. Without loss of generality we can assume that $\Gamma$ is actually equal to the preimage in $G$ of $\Gamma'$.

$G'$ is a finite product of connected almost simple algebraic groups over local fields, so we can apply \cite{MR1828742} to every irreducible component of $\Gamma'$ and obtain that $\ell'_{\Gamma'}$ and $\ell'\left|_{\Gamma'}\right.$ are quasi-isometric.

  By \cite{MR510552} the central extension $G$ of $G'$ is given by a bounded $2$-cocycle. This implies that $G$ is quasi-isometric to $G' \times Z(G)$ and that $(\Gamma,\ell_\Gamma)$ is quasi-isometric to $(\Gamma' \times Z(G),\ell_{\Gamma'} \times \ell_{Z(G)})$ with compatible maps. Puting everything together we get
  \[ (\Gamma,\ell_\Gamma) \simeq (\Gamma' \times Z(G),\ell_{\Gamma'} \times \ell_{Z(G)}) \simeq (\Gamma' \times Z(G),\ell'\left|_{\Gamma'}\right. \times \ell_{Z(G)}) \simeq (\Gamma,\ell\left|_{\Gamma}\right.)\]
  which proves the Theorem.
  \end{proof}  

The second result says that, for higher rank lattices, the measure of
the cusps in $G/\Gamma$ decay exponentially fast. In a first version
of this paper, I had sketched a proof, similar to \cite[Section
  VII.1]{MR1090825}, relying on Margulis' arithmeticity theorem and
the Harish-Chandra--Borel--Behr--Harder reduction theorem for
$S$-arithmetic lattices. Fran\c{c}ois Maucourant explained to me that
this is a direct consequence of property (T) (actually even of spectral gap), and that it applies more generally to all Lie groups and all simple algebraic groups over local fields \cite{MR2600524,MR1415573,MR2782172}. I
thank him for allowing me to include this proof here.
\begin{thm}\label{thm:measure_of_cusps}(Maucourant)
  Let $G$ be a property (T) locally compact group with length function
  $\ell$, and $\Gamma \subset G$ be a lattice. There is a Borel
  fundamental domain $\Omega \subset G$ and $s_0>0$ such that
\[ \int_{\Omega} e^{s_0 \ell(g)} dx < \infty.\]
More generally, if $\Gamma \subset G$ is a lattice in a locally
compact group equipped with a length function $\ell$, and if and $G
\acts G/\Gamma$ has spectral gap, then there is a compact subset $Q
\subset G$ such that the image of $Q^n$ in
$G/\Gamma$ has measure $\geq 1- 2^{-n}$.
\end{thm}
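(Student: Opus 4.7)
I would prove the more general (second) assertion first, then use it to construct the fundamental domain. Spectral gap for the action $G\acts G/\Gamma$ means that the representation $\pi_0$ of $G$ on $L^2_0(G/\Gamma):=L^2(G/\Gamma)\ominus\C\cdot 1$ has no almost invariant vectors. By the (Kazhdan-style) characterization recalled in Remark \ref{rem:spectral_gap}, there is a symmetric compactly supported probability measure $\mu$ on $G$ with $\|\pi_0(\mu)\|\le 1/2$. Since $G$ is $\sigma$-compact, I can enlarge a compact symmetric neighbourhood of $e$ containing $\mathrm{supp}(\mu)$ to a compact symmetric $Q$ with $e\in Q$ and $m(\pi(Q))\ge 1/2$, where $\pi\colon G\to G/\Gamma$ is the projection and $m$ the invariant probability.

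Now I would iterate on the sets $C_n:=\pi(Q^{n+1})=Q\cdot C_{n-1}$ (where $Q$ acts on $G/\Gamma$ by left multiplication) and their complements $B_n:=G/\Gamma\setminus C_n$. Because $Q$ is symmetric and $\mathrm{supp}(\mu)\subset Q$, the function $\pi(\mu)\mathbf 1_{C_{n-1}}$ is supported in $Q\cdot C_{n-1}\subset C_n$, hence $\langle\pi(\mu)\mathbf 1_{C_{n-1}},\mathbf 1_{B_n}\rangle=0$. Decomposing $\mathbf 1_{C_{n-1}}=m(C_{n-1})+f_{n-1}$ and $\mathbf 1_{B_n}=m(B_n)+g_n$ with $f_{n-1},g_n\in L^2_0$ of squared norms $m(C_{n-1})m(B_{n-1})$ and $m(B_n)m(C_n)$, the spectral gap inequality gives
\[m(C_{n-1})\,m(B_n)=-\langle\pi_0(\mu)f_{n-1},g_n\rangle\le \tfrac12\sqrt{m(C_{n-1})m(B_{n-1})m(B_n)m(C_n)}.\]
Squaring and using $m(C_{n-1})\ge m(C_0)\ge 1/2$ and $m(C_n)\le 1$ yields $m(B_n)\le \tfrac12 m(B_{n-1})$, hence $m(B_n)\le 2^{-n}$. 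Replacing $Q$ by a fixed finite power absorbs the shift of index and gives the second assertion of the theorem.

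For the first assertion, property (T) of $G$ together with finiteness of $m$ guarantees that $L^2_0(G/\Gamma)$ has no $G$-invariant vectors (since constants have been removed) and hence has spectral gap, so the previous paragraph applies: there are a compact $Q\subset G$ and $R:=\sup_{Q}\ell<\infty$ such that $m(\pi(Q^n)^c)\le 2^{-n}$. Since $\Gamma$ is discrete in the locally compact group $G$ the projection $\pi$ is a covering map onto its image and standard Borel selection (for instance applied inductively to the increasing Borel sets $\pi(Q^n)\setminus\pi(Q^{n-1})$) produces a Borel section $\omega\colon G/\Gamma\to G$ with $\ell(\omega(x))\le nR$ whenever $x\in\pi(Q^n)\setminus\pi(Q^{n-1})$. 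Taking $\Omega:=\omega(G/\Gamma)$ gives a Borel fundamental domain, and
\[\int_\Omega e^{s_0\ell(g)}\,dg=\int_{G/\Gamma}e^{s_0\ell(\omega(x))}\,dm(x)\le \sum_{n\ge 1} e^{s_0nR}\,2^{-(n-1)},\]
which converges as soon as $0<s_0<(\log 2)/R$.

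The only delicate step is the iteration in the middle paragraph: one has to align correctly the support of $\pi(\mu)\mathbf 1_{C_{n-1}}$ with the complement $B_n$, to exploit the spectral gap on $L^2_0$ and not on all of $L^2$, and to retain the factor $m(C_{n-1})\ge 1/2$ that converts the raw inequality $\sqrt{m(C_{n-1})m(B_n)}\le\tfrac12\sqrt{m(B_{n-1})m(C_n)}$ into a genuine geometric decay of $m(B_n)$. Once this is in place, the passage to property (T) and the construction of the exponentially integrable fundamental domain are essentially formal.
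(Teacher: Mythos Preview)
Your argument is correct and follows essentially the same route as the paper: both extract from spectral gap a compactly supported probability measure with small operator norm on $L^2_0(G/\Gamma)$ and use it to show that $m\bigl(G/\Gamma\setminus\pi(Q^n)\bigr)$ decays geometrically. The paper's implementation differs only marginally---rather than iterating a single $\mu$ to obtain the recursion $m(B_n)\le\tfrac12 m(B_{n-1})$, it invokes directly the sequence $\mu_n$ supported on $Q^n$ with $\|\lambda(\mu_n)-P\|\le 2^{-n}$ (as in Remark~\ref{rem:spectral_gap}) and computes the single pairing $\langle\lambda(\mu_n)\mathbf 1_{\Omega_{n+1}},\mathbf 1_{\Omega_1}\rangle=|\Omega_1|$ to get $|\Omega_1|(1-|\Omega_{n+1}|)\le 2^{-n}$ in one shot---but the two computations are interchangeable. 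Your explicit construction of the Borel fundamental domain via sections on the layers $\pi(Q^n)\setminus\pi(Q^{n-1})$ is a detail the paper leaves implicit.
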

\begin{proof}
The second statement is clearly more general than the first, so let us
focus on the second. Equip $G/\Gamma$ with the unique $G$-invariant
probability measure, and consider $\lambda$, the regular
representation of $G$ on $L_2(G/\Gamma)$. The invariant vectors are
exactly the constant functions on $G/\Gamma$. Denote by $P \colon
L_2(G/\Gamma) \to L_2(G/\Gamma)$ the orthogonal projection on the
constant functions, \emph{i.e.} the linear map sending $f$ to the
constant function equal to $\int_{G/\Gamma} f$. By Remark
\ref{rem:spectral_gap}, there is a symmetric compact subset $Q \subset
G$ and a sequence of probability measures on $Q^n$ such that
$\|\lambda(\mu_n) - P\| \leq 2^{-n}$. We may assume that $Q$ has
positive measure.

For an integer $n$, denote by $\Omega_n$ the image of $Q^n$ in
$G/\Gamma$. Let $f_n$ the indicator function of $\Omega_n$. If $g \in
Q^{n}$, we have that $g Q^{n+1}$ contains $Q$, so
  \[ \langle \lambda(g) f_{n+1},f_1\rangle = | g \Omega_{n+1} \cap \Omega_1| = |\Omega_1|\]
and
\[ \langle \lambda(\mu_n) f_{n+1},f_1\rangle  = |\Omega_1|\]
On the other hand, we have
\[ \langle P f_{n+1},f_1\rangle = |\Omega_{n+1}| |\Omega_1|.\]
So we have
\[ |\Omega_1| (1-|\Omega_{n+1}|) = \langle (\lambda(\mu_n) - P) f_{n+1},f_1\rangle \leq 2^{-n}.\]
We deduce $1-|\Omega_{n+1}| \leq C 2^{-n} = 2C 2^{-n-1}$ for $C =
\frac{1}{|\Omega_1|}$, which is finite because $Q$ has positive
measure. Replacing $Q$ by some power allows to remove the factor $2C$.
\end{proof}

\subsection{Inducing from exponentially integrable lattices}\label{subsection:induction}
Our goal in this subsection is to prove the following result.
\begin{thm}\label{thm:exponentially_integrable_lattices} Let $\Gamma \subset G$ be a lattice in a locally compact group $G$ and $\ell$ be a length function on $G$. Then $(G,\ell)$ has (*) if and only if $(\Gamma,\ell\left|_{\Gamma}\right.)$ has (*).
\end{thm}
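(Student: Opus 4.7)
The theorem is an equivalence. The direction $(\Gamma,\ell|_\Gamma) \in (*) \Rightarrow (G,\ell) \in (*)$ should go by restriction: a two-step representation of $G$ restricts to one of $\Gamma$ with the same spaces $X_i$ and the same growth bounds. The measures $m_n$ witnessing $(*)$ for $\Gamma$, viewed as measures on $G$, are still supported in $\{g : \ell(g) \leq n\}$, so \eqref{eq:mn_Cauchy2} transfers verbatim. The invariance \eqref{eq:lim_invariant2} is given for $g,g' \in \Gamma$; for general $g,g' \in G$, one writes $g,g'$ via the fundamental-domain decomposition $G = \Omega\Gamma$ and controls the cusp contribution by exponential integrability (Theorem \ref{thm:measure_of_cusps}, which applies since $(*)\Rightarrow(T)$, giving spectral gap for $L_2(G/\Gamma)$). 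I focus on the converse, which is the substantive content and the reason for introducing two-step representations in the first place.

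So suppose $(G,\ell) \in (*)$ and let $\pi \colon \Gamma \to \mathrm{GL}(X)$ be a representation on a Hilbert space $X$ with $\|\pi(\gamma)\| \leq L e^{s\ell(\gamma)}$ for small $s$. Fix, via Theorem \ref{thm:measure_of_cusps}, a Borel fundamental domain $\Omega$ and $s_0 > 0$ with $\int_\Omega e^{s_0 \ell(g)}\,dg < \infty$. The plan is to build a two-step representation of $G$ from $\pi$ using the naive induction $\widetilde\pi$ of \S\ref{subsection:naive}, but with \emph{different} $L_r$-norms on source and target in order to absorb the exponential blow-up that kills the one-space induction. Set $\widetilde X_0 = L_p(\Omega;X)$, $\widetilde X_1 = L_2(\Omega;X)$ (Hilbert, as required), $\widetilde X_2 = L_q(\Omega;X)$ with $p > 2 > q$, and let $\pi_0(g)$ and $\pi_1(g)$ denote $\widetilde\pi(g)$ viewed respectively as $\widetilde X_0 \to \widetilde X_1$ and $\widetilde X_1 \to \widetilde X_2$; the two-step axiom $\pi_1(g_1 g_2)\pi_0(g_3) = \pi_1(g_1)\pi_0(g_2 g_3)$ follows from $\widetilde\pi$ being a genuine group homomorphism on the equivariant function space. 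Hölder's inequality bounds $\|\widetilde\pi(g)\|_{\widetilde X_i \to \widetilde X_{i+1}}$ by $L \bigl(\int_\Omega \|\pi(\alpha(g^{-1},\omega))\|^r\,d\omega\bigr)^{1/r}$ for the appropriate $r = r(p,q)$, and the cocycle bound $\ell(\alpha(g^{-1},\omega)) \leq \ell(g) + \ell(\omega) + \ell(g^{-1}\cdot\omega)$ combined with the exponential integrability of $\ell(\omega)$ and $\ell(g^{-1}\cdot\omega)$ on $\Omega$ (the latter by measure-preservation of $\omega \mapsto g^{-1}\cdot\omega$) yields $\|\pi_i(g)\| \leq L' e^{s\ell(g)}$ provided $sr < s_0$. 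Thus, once $s$ is smaller than both $s_0/r$ and the threshold $s_G$ for $(*)$ on $G$, the induced object is admissible.

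Applying $(*)$ for $G$ to this two-step representation produces measures $m_n$ on $G$ (supported in $\{\ell \leq n\}$) and $P \in B(\widetilde X_0, \widetilde X_2)$ such that, via Remark \ref{rem:improvement_invariance}, $\|\widetilde\pi(gm_ng') - P\| \leq C'L'^2 e^{s(\ell(g)+\ell(g'))-tn}$. I then push $m_n$ down to a probability measure $\widetilde m_n$ on $\Gamma$: fix a continuous compactly supported probability density $\phi$ on $\Omega$ near $e$ and set
\[ \int_\Gamma h\,d\widetilde m_n = \int_G \int_\Omega \phi(\omega)\,h\bigl(\alpha(g^{-1},\omega)^{-1}\bigr)\,d\omega\,dm_n(g). \]
Writing $F_x \in \widetilde X_0$ for the equivariant extension to $G$ of the constant function $\omega \mapsto x$ on $\Omega$, a direct unwinding gives $\langle y, \pi(\widetilde m_n)x\rangle = \int_\Omega \phi(\omega)\langle y, (\widetilde\pi(m_n)F_x)(\omega)\rangle\,d\omega$, so the Cauchy estimate for $\widetilde\pi(m_n)$ transfers to $\|\pi(\widetilde m_n) - P'\|_{B(X)} = O(e^{-tn})$ for a certain $P' \in B(X)$. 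The support of $\widetilde m_n$ is not contained in $\{\gamma : \ell(\gamma) \leq Cn\}$ because $\ell(g^{-1}\cdot\omega)$ is unbounded on $\Omega$, so I truncate $\widetilde m_n = \widetilde m_n' + \widetilde m_n''$, keeping only the part with $\ell(\gamma) \leq Kn$. The cusp tail bound $|\{\omega \in \Omega : \ell(\omega) > R\}| \leq Ce^{-s_0 R}$ gives $\|\widetilde m_n''\|_{\mathrm{TV}} \leq C'' e^{-s_0(K-O(1))n}$, and then $\|\pi(\widetilde m_n'')\| \leq L e^{sKn}\|\widetilde m_n''\|_{\mathrm{TV}}$ decays exponentially as soon as $s_0 > sK$ — which forces a relation between $K$ and $s$. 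A mild renormalisation makes $\widetilde m_n'$ a probability measure, and the invariance \eqref{eq:lim_invariant2} for $\gamma,\gamma' \in \Gamma$ follows by repeating the same pairing argument with $\delta_\gamma m_n\delta_{\gamma'}$ in place of $m_n$, using the $G$-invariance of $P$ in the induced picture together with the equivariance $F_{\pi(\gamma)x}(g) = F_x(g\gamma)$ of the induction.

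The main obstacle is the quantitative bookkeeping: the Hölder exponent $r$, the cusp constant $s_0$, the threshold $s_G$ of $(*)$ for $G$, the truncation level $K$, and the smallness parameter $s_\Gamma$ for $\pi$ must all be tuned simultaneously so that the three successive losses — induction growth, Hölder loss, and truncation error — leave a strictly positive exponential decay rate $t_\Gamma > 0$ while keeping $\widetilde m_n'$ supported in $\{\gamma : \ell(\gamma) \leq \mathrm{const}\cdot n\}$. The secondary subtlety is verifying the full invariance \eqref{eq:lim_invariant2} for $\widetilde m_n$: one needs to check that left/right $\Gamma$-translation of $\widetilde m_n$ corresponds via the pairing to the action of $\delta_\gamma m_n \delta_{\gamma'}$ on the equivariant functions $F_x$, which is the natural place where the hypothesis "$\Gamma$ is undistorted" (Theorem \ref{thm:LMR}) is invoked to ensure $\ell|_\Gamma$ is compatible with the word-length used inside $\Gamma$.
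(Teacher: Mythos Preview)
Your architecture is right---induce to a two-step representation of $G$, apply $(*)$, push the measures down to $\Gamma$, truncate---but two steps do not go through as written.

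First, you only treat an honest representation $\pi:\Gamma\to\mathrm{GL}(X)$ on a Hilbert space, which would give strong (T) but not $(*)$: the latter requires handling arbitrary two-step data $(X_0,X_1,X_2,\pi_0,\pi_1)$ of $\Gamma$ with $X_0,X_2$ Banach. The paper's induction is set up differently and handles this directly: it keeps the endpoints fixed and induces only in the middle, defining $\widetilde\pi_0(g)\colon X_0\to L_2(\Omega;X_1)$ by $x\mapsto[\omega\mapsto\pi_0(\alpha(g^{-1},\omega)^{-1})x]$ and $\widetilde\pi_1(g)\colon L_2(\Omega;X_1)\to X_2$ by $f\mapsto\int_\Omega\pi_1(\alpha(g,\omega))f(\omega)\,d\omega$. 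No H\"older is needed; Cauchy--Schwarz plus Lemma~\ref{lem:exp_int_cocycle} give the growth bound $\|\widetilde\pi_i(g)\|\le CLe^{2s\ell(g)}$.

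Second, and this is the real difficulty, your invariance argument fails. You want $\pi(\gamma_1\widetilde m_n)\to P'$ and propose to get it from the $G$-invariance of the induced $P$ by replacing $m_n$ with $\delta_{\gamma_1}m_n$. But the pushforward of $\delta_{\gamma_1}m_n\otimes\phi\,d\omega$ to $\Gamma$ is \emph{not} $\gamma_1\widetilde m_n$: the cocycle gives $\alpha((\gamma_1g)^{-1},\omega)^{-1}=\alpha(\gamma_1^{-1},\omega)^{-1}\alpha(g^{-1},\gamma_1^{-1}\!\cdot\omega)^{-1}$ and the left factor depends on $\omega$. Equivalently, left translation by $\gamma_1$ is unbounded on your $L_q(\Omega;X)$---this is precisely the obstruction of \S\ref{subsection:naive}---so $\widetilde\pi(\gamma_1 m_n)F_x\to PF_x$ in $L_q$ says nothing about $\pi(\gamma_1)P'x$. (Your identity $F_{\pi(\gamma)x}(g)=F_x(g\gamma)$ is also false as stated.) The paper's remedy is to apply $(*)$ for $G$ a \emph{second} time with a different target $L^1(G,\nu;X_2)$, where $\nu$ is equivalent to Haar and built from a weight $\nu_0$ on $\Gamma$ of full support with $\int e^{s\ell(\gamma)}\,d\nu_0(\gamma)<\infty$; the limit function $f$ is then left-$G$-invariant hence a.e.\ constant equal to $Px$, and one checks $\pi(\gamma_1 m_n^{(0)})x=\int f_n(\omega\gamma_1^{-1})\,d\omega$, with this functional bounded on $L^1(G,\nu;X_2)$ precisely because $\nu_0$ has full support. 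This second application of $(*)$ is the heart of the argument and is absent from your sketch. A side remark: Theorem~\ref{thm:LMR} plays no role in the proof of Theorem~\ref{thm:exponentially_integrable_lattices}; it is invoked only afterwards, to pass from $\ell|_\Gamma$ to a word length on $\Gamma$.
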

\begin{rem}\label{rem:exponentially_integrable_lattices_Banach} More generally, if $\mathcal E$ is a class of Banach spaces which is stable by $X\mapsto L_2(\Omega,\mu;X)$ for every measure space $(\Omega,\mu)$, then $(G,\ell)$ has $(*_{\mathcal E})$ iff $(\Gamma,\ell\left|_{\Gamma}\right)$ has.\end{rem}
\begin{proof}
We can assume that $G$ has property (T), as otherwise $\Gamma$ does not have property (T) either and the theorem is empty. By Theorem \ref{thm:measure_of_cusps}, $\Gamma$ admits a fundamental domain $\Omega \subset G$ and $s_0>0$ such that
  \begin{equation}\label{eq:hyp_exp_integrable} \int_{\Omega} e^{s_0 \ell(x)} dx<\infty.\end{equation}

  We can assume furthermore that every element $\omega$ of $\Omega$ almost minimizes the length of its $\Gamma$-orbit, for example that it satisfies
  \begin{equation}\label{eq:ell_almost_min_on_Omega}\ell(\omega) \leq \inf_{\gamma \in \Gamma} \ell(\omega \gamma)+1\textrm{ for every }\gamma \in \Gamma.\end{equation} Indeed, if $(\gamma_n)_{n \geq 0}$ is an enumeration of $\Gamma$ (say with $\gamma_0=e$) and if $f(\omega) = \omega \gamma_n$ for the first $n$ satisfying $\ell(\omega \gamma_n)  \leq 1+\inf_\gamma \ell(\omega \gamma)$, then we can replace $\Omega$ by $f(\Omega)$, which remains a Borel fundamental domain for $\Gamma$, which still satisfies \eqref{eq:hyp_exp_integrable} and which moreover satisfies \eqref{eq:ell_almost_min_on_Omega}. Of course, in most interesting cases, $\ell$ is proper and we can remove the $+1$ in \eqref{eq:ell_almost_min_on_Omega}.

For convenience we choose the normalization of the Haar measure on $G$ so that $\Omega$ has measure one.

For every $g \in G$ and $\omega \in \Omega$ we denote by \[g\omega=(g \cdot \omega) \alpha(g,\omega)\] the unique decomposition of $g\omega$ as a product of $g\cdot \omega \in \Omega$ and $\alpha(g,\omega) \in \Gamma$. Recall that $(g,\omega) \mapsto g \cdot \omega$ is a probability measure preserving action of $G$ on $\Omega$, and that $\alpha$ is a cocycle, \emph{i.e.} is satisfies the \emph{cocycle relation}
\[ \alpha(g_1g_1,\omega) =\alpha(g_1,g_2\cdot \omega)\alpha(g_2,\omega)\]
for every $g_1,g_2\in G$ and $\omega \in \Omega$. We start with a Lemma which shows that, in a sense close to Shalom's notion of $L_p$-integrable lattice, \eqref{eq:hyp_exp_integrable} implies that $\Gamma$ is \emph{exponentially integrable}.
\begin{lemma}\label{lem:exp_int_cocycle} There is $C>0$ such that for every $s\leq\frac{s_0}{2}$,
  \[ \int_\Omega e^{s\ell(\alpha(g,\omega))} d\omega \leq C e^{2s\ell(g)}.\]
\end{lemma}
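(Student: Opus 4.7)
The plan is to exploit the two features of $\Omega$ built into its definition: \eqref{eq:hyp_exp_integrable}, which gives exponential integrability of $\ell$ on $\Omega$ itself, and the almost‑minimality property \eqref{eq:ell_almost_min_on_Omega}, which lets one control $\ell(g\cdot\omega)$ in terms of $\ell(g\omega)$. The identity to start from is the defining relation $g\omega=(g\cdot\omega)\alpha(g,\omega)$, equivalently
\[ \alpha(g,\omega)=(g\cdot\omega)^{-1}g\omega,\]
so the triangle inequality and symmetry of $\ell$ give
\[ \ell(\alpha(g,\omega))\le \ell(g\cdot\omega)+\ell(g)+\ell(\omega).\]

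The main step is to bound $\ell(g\cdot\omega)$. Applying \eqref{eq:ell_almost_min_on_Omega} to the element $g\cdot\omega\in\Omega$ with $\gamma=\alpha(g,\omega)\in\Gamma$ yields
\[ \ell(g\cdot\omega)\le \ell((g\cdot\omega)\alpha(g,\omega))+1=\ell(g\omega)+1\le \ell(g)+\ell(\omega)+1.\]
Combining with the previous inequality one obtains the pointwise estimate
\[ \ell(\alpha(g,\omega))\le 2\ell(g)+2\ell(\omega)+1\qquad\forall\omega\in\Omega.\]

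Exponentiating and integrating over $\Omega$ now finishes the proof: for any $s\le s_0/2$,
\[ \int_\Omega e^{s\ell(\alpha(g,\omega))}\,d\omega\le e^{s}e^{2s\ell(g)}\int_\Omega e^{2s\ell(\omega)}\,d\omega\le e^{s_0/2}e^{2s\ell(g)}\int_\Omega e^{s_0\ell(\omega)}\,d\omega,\]
and the last integral is finite by \eqref{eq:hyp_exp_integrable}, so one may take $C=e^{s_0/2}\int_\Omega e^{s_0\ell(\omega)}\,d\omega$.

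There is no real obstacle here: the content of the lemma is entirely in the careful choice of fundamental domain made just before the statement. The only subtlety worth highlighting is that the naive triangle inequality $\ell(\alpha(g,\omega))\le\ell(g)+2\ell(\omega)+\ell(g\cdot\omega)$ would be useless without an a priori bound on $\ell(g\cdot\omega)$, and it is precisely the almost‑minimality \eqref{eq:ell_almost_min_on_Omega} that provides such a bound at the modest cost of doubling the coefficient of $\ell(g)$ (hence the factor $2s\ell(g)$ in the conclusion, rather than $s\ell(g)$).
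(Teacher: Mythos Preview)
Your proof is correct and essentially identical to the paper's: both use $\alpha(g,\omega)=(g\cdot\omega)^{-1}g\omega$, invoke the almost-minimality \eqref{eq:ell_almost_min_on_Omega} to bound $\ell(g\cdot\omega)\le\ell(g\omega)+1$, deduce the pointwise estimate $\ell(\alpha(g,\omega))\le 1+2\ell(g)+2\ell(\omega)$, and integrate using \eqref{eq:hyp_exp_integrable} to arrive at the same constant $C=e^{s_0/2}\int_\Omega e^{s_0\ell(\omega)}\,d\omega$.
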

\begin{proof} We write $\alpha(g,\omega) = (g\cdot \omega)^{-1}g\omega$. By the symmetry of $\ell$ and \eqref{eq:ell_almost_min_on_Omega}, we have
  \[ \ell((g\cdot \omega)^{-1} ) = \ell(g\cdot \omega) \leq \ell(g\omega)+1.\]
  By the subadditivity of $\ell$ we deduce
  \[ \ell(\alpha(g,\omega)) \leq 1+2 \ell(g\omega) \leq 1+2\ell(g) + 2\ell(\omega).\] The lemma follows by integrating, with $C= e^{s_0/2}\int_\Omega e^{s_0\ell(\omega)} d\omega.$
  \end{proof}

We start with the interesting direction, namely the implication $(G,\ell)$ has (*)$\implies$ $(\Gamma,\ell\left|_{\Gamma}\right.)$ has (*).

Let $\widetilde s,\widetilde t,\widetilde C>0$ and $\widetilde m_n$ be a sequence of probability measures on $\{g \in G|\ell(g) \leq n\}$ as in (*) for $(G,\ell)$. 

Let $\pi \colon \Gamma \to B(X_0,X_2)$ be such that $\pi(gg')$ factors as $\pi_1(g) \pi_0(g')$ for a Hilbert space $X_1$ and two maps $\pi_0\colon \Gamma \to B(X_0,X_1)$ and $\pi_1\colon \Gamma \to B(X_1,X_2)$. Assume that there is $L>0$ such that  $\|\pi_i(\gamma)\|\leq L e^{s \ell(g)}$ for all $\gamma \in \Gamma$ and $i \in \{0,1\}$, with $s=\min(\frac{s_0}{4},\frac{\widetilde s}{2})$.

For $x \in X_0$, $g \in G$ and $\omega \in \Omega$, define $\widetilde \pi_0(g)x (\omega) = \pi_0(\alpha(g^{-1},\omega)^{-1}) x \in X_1$. By Lemma \ref{lem:exp_int_cocycle}, since $s\leq\frac{s_0}{4}$ the measurable function $\widetilde \pi_0(g)x$ is $L_2$-integrable:
\[ \|\widetilde \pi_0(g)x\|_{L_2(\Omega;X_1)} \leq \left(\int_\Omega  L^2 e^{2s \ell(\alpha(g^{-1},\omega))} \|x\|^2d\omega\right)^{\frac 1 2} \leq C L e^{2s \ell(g)}.\]
This proves that $\widetilde \pi_0(g)$ has norm $\leq CL e^{2s\ell(g)}$ from $X_0$ to $L_2(\Omega;X_1)$.
Similarly, for every $f \in L_2(\Omega;X_1)$ we can bound
\begin{eqnarray*} \int \|\pi_1(\alpha(g^{-1},\omega)^{-1} f(g^{-1}\cdot \omega)\|_{X_2} d\omega & \leq & \int L e^{s\ell(\alpha(g^{-1},\omega))} \|f(g^{-1}\cdot \omega)\|_{X_1} d\omega\\
  & \leq & L \left (\int e^{2s \ell(\alpha(g^{-1},\omega))} d\omega\right)^{\frac 1 2} \|f\|_2.\end{eqnarray*}
Therefore, we can define a map $\widetilde \pi_1(g) \colon L_2(\Omega;X_1) \to X_2$ by \[\widetilde \pi_1(g) f = \int \pi_1(\alpha(g^{-1},\omega)^{-1}) f(g^{-1}\cdot \omega) d\omega=\int \pi_1(\alpha(g,\omega)) f(\omega) d\omega,\] and it satisfies $\|\widetilde \pi_1(g) \| \leq CLe^{2s\ell(g)}$. It is easy to check that $\widetilde \pi_0$ and $\widetilde \pi_1$ are strongly continuous.
By the cocycle formula, we have
\[\widetilde \pi_1(g) \widetilde \pi_0(g')= \widetilde \pi(gg')\]
where
\[\widetilde \pi(g) x = \int \pi(\alpha(g^{-1},\omega)^{-1}) x d\omega.\]
So since $s \leq \frac{\widetilde s}{2}$, the map $\widetilde \pi$ satisfies the assumption in (*), so that there is $P \in B(X_0,X_2)$ such that
\[ \| \widetilde \pi(\widetilde m_n) - P \| \leq \widetilde C C^2 L^2 e^{-\widetilde{t}n}\]
and for every $g_1,g_2 \in G$
\[ \lim_n \|\widetilde \pi(\widetilde m_n) - \widetilde \pi(\delta_{g_1}\widetilde m_n \delta_{g_2}) \| =0.\]
Let us denote by $m_n^{(0)}$ the probability measure on $\Gamma$ given as the image of $m_n \otimes d\omega$ by the map $(g,\omega)\mapsto \alpha(g^{-1},\omega)^{-1}$. Then by Fubini's theorem one can write $\widetilde \pi(\widetilde m_n) = \pi(m_n^{(0)})$. To summarize, we have proven that whenever $\pi \colon \Gamma \to B(X_0,X_1)$ is as above with $s = \min(\frac{s_0}{4},\frac{\widetilde s}{2})$, then $\pi(m_n^{(0)})$ is well-defined (\emph{i.e.} the series $\sum_\gamma m_n^{(0)}(\gamma) \pi(\gamma)$ converges in norm) and is Cauchy in $B(X_0,X_2)$. In particular, for every $\gamma_1,\gamma_2 \in \Gamma$, $\lim_n \pi(\gamma_1 m_n^{(0)} \gamma_2)$ exists.

We shall prove that this limit is $P$. To do so we will prove that
\begin{equation}\label{eq:goal_P_Gamma_invariant} \lim_n \pi(\gamma_1 m_n^{(0)})x =Px\textrm{  for every }x \in X_0.\end{equation}
for every $\gamma_1 \in \Gamma$. Before we do so, let us explain how \eqref{eq:goal_P_Gamma_invariant} allows us to conclude the proof of Theorem \ref{thm:exponentially_integrable_lattices}. By the preceding discussion, \eqref{eq:goal_P_Gamma_invariant} implies the seemingly stronger conclusion that $\lim_n \|\pi(\gamma_1 m_n^{(0)}) -P\|=0$. By symmetry (consider the dual maps $\pi(g^{-1})^* \in B(X_2^*,X_0^*)$) \eqref{eq:goal_P_Gamma_invariant} implies that 
  \[ \lim_n \|\pi(m_n^{(0)}\gamma_2) - P \| =0\]
  for every $\gamma_2 \in \Gamma$. And hence $\|\pi(\gamma_1 m_n^{(0)} \gamma_2) - \pi(m_n^{(0)}) \|$, which is less than
\[ \|\pi(\gamma_1 m_n^{(0)} \gamma_2) - \pi(\gamma_1 m_n^{(0)}) \| + \|\pi(\gamma_1 m_n^{(0)}) - \pi(m_n^{(0)}) \|\]
by the triangle inequality, goes to zero for every $\gamma_1,\gamma_2 \in \Gamma$ (for the first term, this is \eqref{eq:goal_P_Gamma_invariant} applied to the map $\gamma \mapsto \pi(\gamma_1 \gamma)$). The preceding inequality is almost (*) for $(\Gamma,\ell)$, except that $m_n^{(0)}$ is not supported on $B_n := \{\gamma \in \Gamma | \ell(\gamma) \leq n\}$. This is however almost true for $m_{n/5}$, as will be deduced from the following lemma.
\begin{lemma}\label{lem:tail_of_mn0} If $s \leq \frac{s_0}{4}$, then for every integer $n$
  \[ \int e^{s\ell(\gamma)} 1_{\ell(\gamma) \geq 5n} dm_n^{(0)}(\gamma) \leq C e^{(-\frac{3s_0}{2}+5s)n} \leq Ce^{-\frac{s_0}{4}n}.\]
  \end{lemma}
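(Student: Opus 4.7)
The plan is to unwind the definition of $m_n^{(0)}$ as a pushforward of $\widetilde m_n \otimes d\omega$ under $(g,\omega) \mapsto \alpha(g^{-1},\omega)^{-1}$, then combine the pointwise cocycle estimate already used in Lemma \ref{lem:exp_int_cocycle} with a tail estimate on $\Omega$ coming from the exponential integrability \eqref{eq:hyp_exp_integrable}.

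Concretely, I first rewrite the target integral, using $\ell(\gamma^{-1}) = \ell(\gamma)$, as
\[ \int e^{s\ell(\gamma)} 1_{\ell(\gamma) \geq 5n}\, dm_n^{(0)}(\gamma) = \int_G \int_\Omega e^{s\ell(\alpha(g^{-1},\omega))} 1_{\ell(\alpha(g^{-1},\omega))\geq 5n}\, d\omega\, d\widetilde m_n(g). \]
Next I invoke the inequality $\ell(\alpha(h,\omega)) \leq 1 + 2\ell(h) + 2\ell(\omega)$ established inside the proof of Lemma \ref{lem:exp_int_cocycle}. Since $\widetilde m_n$ is supported where $\ell(g) \leq n$, this gives $\ell(\alpha(g^{-1},\omega)) \leq 1 + 2n + 2\ell(\omega)$, and the constraint $\ell(\alpha(g^{-1},\omega)) \geq 5n$ then forces $\ell(\omega) \geq (3n-1)/2$. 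Using these two facts and the fact that $\widetilde m_n$ is a probability measure, I can bound the right hand side above by
\[ e^{s(1+2n)} \int_\Omega e^{2s\ell(\omega)} 1_{\ell(\omega)\geq (3n-1)/2}\, d\omega. \]

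The last step is a standard Markov-type tail estimate based on \eqref{eq:hyp_exp_integrable}. Writing $C_0 := \int_\Omega e^{s_0\ell(\omega)} d\omega < \infty$ and assuming $2s \leq s_0/2 < s_0$, on the region $\ell(\omega) \geq M$ I factor $e^{2s\ell(\omega)} = e^{s_0\ell(\omega)} e^{-(s_0-2s)\ell(\omega)} \leq e^{s_0\ell(\omega)} e^{-(s_0-2s)M}$, so
\[ \int_\Omega e^{2s\ell(\omega)} 1_{\ell(\omega) \geq M}\, d\omega \leq C_0 e^{-(s_0 - 2s)M}. \]
Applying this with $M = (3n-1)/2$ and collecting exponentials, the exponent becomes $s + 2sn - (s_0-2s)(3n-1)/2$, whose linear-in-$n$ part is $5sn - \tfrac{3s_0}{2} n$, yielding the announced bound $C e^{(5s - 3s_0/2)n}$. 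Finally, when $s \leq s_0/4$, one has $5s - 3s_0/2 \leq 5s_0/4 - 3s_0/2 = -s_0/4$, giving the second inequality.

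There is no real obstacle here: the lemma is essentially a routine bookkeeping exercise once one has both the linear bound on $\ell \circ \alpha$ and the exponential integrability of $\ell$ on $\Omega$. The only point demanding a bit of care is to arrange the numerical constants so that the threshold $5n$ on $\ell(\gamma)$ pushes $\ell(\omega)$ into a regime where the exponential tail beats the factors $e^{2sn}$ coming from the cocycle estimate; this is exactly what choosing the cut-off $5n$ (rather than, say, $3n$) achieves, and why the factor $5$ appears in $5s - 3s_0/2$.
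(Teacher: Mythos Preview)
Your proof is correct and follows essentially the same Markov--Chebyshev idea as the paper. The only organizational difference is that the paper applies the Chebyshev trick directly at the level of $\gamma$ (bounding $e^{s\ell(\gamma)}1_{\ell(\gamma)\geq 5n}\leq e^{\frac{s_0}{2}\ell(\gamma)-5(\frac{s_0}{2}-s)n}$) and then invokes Lemma~\ref{lem:exp_int_cocycle} as a black box, whereas you unwind that lemma and apply the tail estimate at the level of $\omega$; the arithmetic and the resulting exponent $5s-\tfrac{3s_0}{2}$ are identical.
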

\begin{proof} We have $e^{s\ell(\gamma)} 1_{\ell(\gamma) \geq 5n} \leq e^{\frac{s_0}{2} \ell(\gamma) - 5(\frac{s_0}{2}-s)n}$. By integrating and using the definition of $m_n^{(0)}$ we obtain
  \[ \int e^{s\ell(\gamma)} 1_{\ell(\gamma) \geq 5n} dm_n^{(0)}(\gamma) \leq \sup_{\ell(g) \leq n} \int e^{\frac{s_0}{2} \ell(\alpha(g^{-1},\omega)) - 5(\frac{s_0}{2}-s)n} d\omega.\]
  One concludes by Lemma \ref{lem:exp_int_cocycle}.
\end{proof}
So one defines $m_n$ as the conditional probability $m_{n/5}^{(0)}(\cdot \cap B_n)/m_{n/5}^{(0)}(B_n)$. By definition $m_n$ is supported on $B_n$. On the other hand, Lemma \ref{lem:tail_of_mn0} provides a constant $C'$ (depending on $C$ and $s_0$ only) such that, if $s \leq \frac{s_0}{4}$,
\[\|\pi(m_n) - \pi(m_{n/5}^{(0)})\| \leq C' L^2 e^{-\frac{s_0}{20}n}.\]
Therefore
\[\|\pi(m_n) - P \| \leq C'' L^2 e^{-t n}\]
for $t=\min(\frac{s_0}{20},\frac{\widetilde t}{5})$ and $C''=C'+\widetilde C C^2$. Similarly
\[\lim_n \|\pi(\gamma_1 m_n \gamma_2) - \pi(\gamma_1 m_{n/5}^{(0)}\gamma_2)\|=0\]
for every $\gamma_1,\gamma_2 \in \Gamma$. This proves (*) for $(\Gamma,\ell)$ with $s = \min(\frac{s_0}{4},\frac{\widetilde s}{2})$, $t=\min(\frac{s_0}{20},\frac{\widetilde t}{5})$.

It remains to justify \eqref{eq:goal_P_Gamma_invariant}.

Fix a probability measure $\nu_0$ on $\Gamma$ with full support and satisfying $\int e^{s\ell(\gamma)} d\nu_0(\gamma)<\infty$. Let $\nu$ be the probability measure on $G$ given by $\int f d\nu = \int f(\omega \gamma) d\omega d\nu_0(\gamma)$. We define a new map $\widetilde \pi_1'(g) \colon L_2(\Omega;X_1) \to L^1(G,\nu;X_2)$ by setting
\[\widetilde \pi_1'(g) f(\omega\gamma) = \pi_1(\gamma^{-1} \alpha(g^{-1},\omega)^{-1}) f(g^{-1}\cdot \omega),\]
so that $\widetilde \pi_1(g) f= \int (\widetilde \pi'_1(g) f)(\omega) d\omega$. To check that $\widetilde \pi'_1(g)$ maps $L_2(\Omega;X_1)$ to $L^1(G,\nu;X_2)$, we compute
\begin{multline*}\int \| \pi_1(\gamma^{-1} \alpha(g^{-1},\omega)^{-1}) f(g^{-1}\cdot \omega) \| d\omega d\nu_0(\gamma) \\ \leq L \int e^{s\ell(\gamma)} e^{s\ell(\alpha(g^{-1},\omega))} \|f(g^{-1}\cdot \omega\| d\omega d\nu_0(\gamma).\end{multline*}
This is less than
\[ L \int e^{s\ell(\gamma)} d\nu_0(\gamma) C e^{2s\ell(g)} \|f\|_2\]
by the Cauchy-Schwarz inequality and Lemma \ref{lem:exp_int_cocycle}. Also, we have $\widetilde \pi'_1(g) \widetilde \pi_0(g') = \widetilde \pi'(gg')$ where \[\widetilde \pi'(g)x(\omega\gamma) = \pi(\gamma^{-1} \alpha(g^{-1},\omega)^{-1})x\] for $x \in X_0$. So (recall $2s \leq \widetilde s$) we can apply (*) for $(G,\ell)$. In particular, if $x \in X_0$ we get that $f_n=\widetilde \pi'(\widetilde m_n)x$ converges to some $f \in L^1(G,\nu;X_2)$, and $\widetilde \pi'(g \widetilde m_n)x$ converges to the same $f$ for every $g \in G$. 

On the one hand, for $g,g'\in G$ and almost every $\omega \gamma \in G$, one checks from the definitions that 
\[ (\widetilde\pi'(gg')x)(\omega \gamma) = (\widetilde \pi'(g')x)(g^{-1}\omega \gamma).\]
This means that both functions $f_n$ and $h\mapsto f_n(g^{-1} h)$ converge in $L^1(G,\nu;X_2)$ to $f$. Since $\nu$ is equivalent to the Haar measure of $G$, this implies that $f(g^{-1}h) = f(h)$ for almost every $h \in G$. But this holds for every $g \in G$, therefore there exists $y \in X_2$ such that $f(h) = y$ for $\nu$-almost every $h$.

On the other hand, by definition of $m_n^{(0)}$ and by the fact that $\widetilde \pi(g)$ is the composition of $\widetilde \pi'(g)$ and of the map $f \mapsto \int f d\nu$, we have
\[ \pi(m_n^{(0)})x= \widetilde \pi(\widetilde m_n)x = \int f_n(\omega) d\omega\]
and $Px = \int f(\omega) d\omega = y$.
Similarly,
\begin{eqnarray*} \pi(\gamma_1 m_n^{(0)})x&=& \iint \pi(\gamma_1 \alpha(g^{-1},\omega)^{-1})x d\omega d\widetilde m_n(g) \\
  &=&\int f_n(\omega \gamma_1^{-1}) d\omega.
\end{eqnarray*}
By our choice of $\nu_0$, the map $h \mapsto \int h(\omega \gamma_1^{-1}) d\omega$ is bounded on $L^1(G,\nu;X_2)$. This implies that $\lim_n \pi(\gamma_1 m_n^{(0)}) x=\int f(\omega \gamma_1^{-1})  d\omega$, which is equal to $y=Px$. This concludes the proof of \eqref{eq:goal_P_Gamma_invariant} and therefore of the \emph{only if} direction in Theorem \ref{thm:exponentially_integrable_lattices}.

The if direction is easier. Assume that $(\Gamma,\ell\left|_\Gamma\right.)$ satisfies (*), with $s,t,C,(m_n)_{n \geq 0}$. Without loss of generality we can assume that $s \leq s_0$. Let $\pi\colon G \to B(X_0,X_1)$ be as in the definition of (*) with this value of $s$. We can apply (*) to the restriction of $\pi$ to $\Gamma$. More generally, for every measure $\mu_1,\mu_2$ on $G$ such that $C(\mu_i) := L \int e^{s \ell(g)} d\mu_i(g) <\infty$, we can apply (*) to the map $\gamma \mapsto \pi(\mu_1 \delta_{\gamma} \mu_2)$ and get an operator ${}_{\mu_1}P_{\mu_2}$ in $B(X_0,X_1)$ satisfying
\[ \| {}_{\mu_1}P_{\mu_2} - \pi(\mu_1 m_n \mu_2)\| \leq C C(\mu_1)C(\mu_2) e^{-tn}\]
and ${}_{\mu_1\delta_{\gamma_1}}P_{\delta_{\gamma_2}\mu_2}={}_{\mu_1}P_{\mu_2}$ for every $\gamma_1,\gamma_2 \in \Gamma$. But since $s \leq s_0$, we can in particular apply the preceding to $\mu_1=\mu_2=\mu$ the probability measure on $\Omega$, and to its translates by any $g \in G$. For $g_1 \in G$, we can decompose the probability measure $\delta_{g_1} \mu$ as a sum $\sum_{\gamma \in \Gamma} \mu_\gamma \delta_{\gamma}$ for a family of measures $\mu_\gamma$ on $\Omega$ suming to $\mu$. This leads to the equality
\[ {}_{g_1\mu}P_{\mu} = \sum_{\gamma} {}_{\mu_\gamma \delta_{\gamma}}P_{\mu} = \sum_{\gamma} {}_{\mu_\gamma}P_{\mu} = {}_{\mu}P_{\mu}\]
where the easy justifications of the convergence are left to the reader. Similarly ${}_{g_1 \mu}P_{\mu g_2} = {}_{\mu}P_{\mu}$ for every $g_1,g_2 \in G$. This is almost (*) for $G$ and the sequence of probability measures $\widetilde m_n = \mu \ast m_n \ast \mu$. The only issue is that $\widetilde m_n$ is not supported in $\{g \in G|\ell(g) \leq n\}$. This is fixed by suitably truncating $\widetilde m_n$ as in the only if direction.
\end{proof}
We also have the following variant of the easy direction in the preceding theorem.
\begin{prop}\label{prop:StrongTafterrestriction} Let $G,\Gamma,\ell$ be as in Theorem \ref{thm:exponentially_integrable_lattices}. There exists $s_0>0$ such that, for every $s \leq s_0$ and $c\geq 0$, if $\mathcal C_{s\ell+c}(\Gamma)$ has a Kazhdan projection, then so does $\mathcal C_{s\ell+c}(G)$.

More generally if $G \acts G/\Gamma$ has spectral gap, if $\mathcal E$ is a class of Banach spaces and if $\mathcal C_{s\ell+c,\mathcal E}(\Gamma)$ has a Kazhdan projection, then so does $\mathcal C_{s\ell+c,\mathcal E}(G)$.
\end{prop}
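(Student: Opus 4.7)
The proof transports the Kazhdan projection from $\Gamma$ to $G$ by averaging against a probability measure supported on a fundamental domain with good integrability. Under either hypothesis of the proposition (property (T) for the first statement, spectral gap of $G \acts G/\Gamma$ for the second), Theorem \ref{thm:measure_of_cusps} furnishes a Borel fundamental domain $\Omega \subset G$ and $s_0' > 0$ with $\int_\Omega e^{s_0'\ell(x)}dx < \infty$. Set $s_0 := s_0'/3$, and, following the proof of Theorem \ref{thm:exponentially_integrable_lattices}, replace $\Omega$ by one also satisfying the length-almost-minimizing property \eqref{eq:ell_almost_min_on_Omega}. Let $\mu$ denote the restriction of Haar measure to $\Omega$, normalized to be a probability; the identification $\Omega \simeq G/\Gamma$ makes $\mu$ the $G$-invariant probability, so the action $g \cdot \omega$ on $\Omega$ preserves $\mu$.

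Given $s \leq s_0$, $c \geq 0$, and a Kazhdan projection in $\mathcal C_{s\ell+c,\mathcal E}(\Gamma)$, the sequential characterization recalled just before Lemma \ref{lem:changing_length} provides compactly supported signed measures $m_n$ on $\Gamma$ with $\int dm_n = 1$, $\|m_n - m_{n+1}\|_{\mathcal C_{s\ell+c,\mathcal E}(\Gamma)} \leq C_0 e^{-n}$, and $\|\delta_\gamma m_n - m_n\|_{\mathcal C_{s\ell+c,\mathcal E}(\Gamma)} \to 0$ for every $\gamma \in \Gamma$. Consider the candidate
\[ \widetilde m_n \;:=\; \mu \ast m_n \ast \mu. \]
Since restricting a representation of $G$ satisfying $\|\pi(g)\| \leq e^{s\ell(g)+c}$ to $\Gamma$ yields a representation satisfying the same bound, $\|\nu\|_{\mathcal C_{s\ell+c,\mathcal E}(G)} \leq \|\nu\|_{\mathcal C_{s\ell+c,\mathcal E}(\Gamma)}$ for every measure $\nu$ supported on $\Gamma$. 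The assumption $s \leq s_0$ additionally makes $\mu$ of finite norm in $\mathcal C_{s\ell+c,\mathcal E}(G)$, bounded by $e^c \int_\Omega e^{s\ell(x)} dx$, so $\int d\widetilde m_n = 1$ and
\[ \|\widetilde m_n - \widetilde m_{n+1}\|_{\mathcal C_{s\ell+c,\mathcal E}(G)} \;\leq\; \|\mu\|_{\mathcal C_{s\ell+c,\mathcal E}(G)}^2 \; \|m_n - m_{n+1}\|_{\mathcal C_{s\ell+c,\mathcal E}(\Gamma)} \;\leq\; C_1 e^{-n}. \]
By the above characterization, producing a Kazhdan projection in $\mathcal C_{s\ell+c,\mathcal E}(G)$ then reduces to proving the one-sided invariance $\|g \widetilde m_n - \widetilde m_n\|_{\mathcal C_{s\ell+c,\mathcal E}(G)} \to 0$ for every $g \in G$.

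To prove this invariance, decompose $g\mu = \sum_{\gamma \in \Gamma} \mu_\gamma \ast \delta_\gamma$, where $\mu_\gamma$ is the pushforward to $\Omega$ of the restriction of $\mu$ to $\{x : \alpha(g,x) = \gamma\}$ along $x \mapsto g \cdot x$. Measure-preservation of the $G$-action on $\Omega$ yields $\sum_\gamma \mu_\gamma = \mu$, so
\[ g \widetilde m_n - \widetilde m_n \;=\; \sum_{\gamma \in \Gamma} \mu_\gamma \ast (\delta_\gamma m_n - m_n) \ast \mu. \]
Each summand tends to $0$ in norm by hypothesis, and the crude bound $\|\delta_\gamma m_n - m_n\|_{\mathcal C_{s\ell+c,\mathcal E}(\Gamma)} \leq (e^{s\ell(\gamma)+c}+1)\sup_n \|m_n\|$ reduces the interchange of sum and limit to dominated convergence, provided the quantity
\[ \sum_{\gamma \in \Gamma} \|\mu_\gamma\|_{\mathcal C_{s\ell+c,\mathcal E}(G)} \cdot e^{s\ell(\gamma)} \;\leq\; e^{c} \int_\Omega e^{s\ell(g \cdot x) + s\ell(\alpha(g,x))} d\mu(x) \]
is finite. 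Combining \eqref{eq:ell_almost_min_on_Omega} (which gives $\ell(g \cdot x) \leq \ell(g) + \ell(x) + 1$) with the estimate $\ell(\alpha(g,x)) \leq 1 + 2\ell(g) + 2\ell(x)$ from the proof of Lemma \ref{lem:exp_int_cocycle}, the integrand is dominated by $e^{O_g(1)} \cdot e^{3s\ell(x)}$, which is integrable on $\Omega$ precisely because $3s \leq s_0'$. This last integrability/dominated-convergence step is the only substantial obstacle and the only place where exponential integrability of the fundamental domain is used; the remainder is routine Banach-algebra manipulation.
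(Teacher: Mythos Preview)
Your proof is correct, but the paper takes a more direct route that avoids the sequential characterization entirely and yields a slightly better threshold for $s$.

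The paper works directly with the Kazhdan projection $P \in \mathcal C_{s\ell+c,\mathcal E}(\Gamma)$ and takes $\mu P$ (one-sided convolution only) as the candidate Kazhdan projection for $G$. The same decomposition $\delta_{g_1}\mu = \sum_\gamma \mu_\gamma \delta_\gamma$ you use then gives the invariance in a single line:
\[
\delta_{g_1}\mu P \;=\; \sum_\gamma \mu_\gamma \delta_\gamma P \;=\; \sum_\gamma \mu_\gamma P \;=\; \mu P,
\]
because $\delta_\gamma P = P$ holds \emph{exactly} in $\mathcal C_{s\ell+c,\mathcal E}(\Gamma)$. The only summability needed is $\sum_\gamma \|\mu_\gamma \delta_\gamma\|_G < \infty$ and $\sum_\gamma \|\mu_\gamma\|_G < \infty$, both of which follow from $s \leq s_0'$ alone (no factor of $3$). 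By contrast, your approach passes through the approximating sequence $m_n$, uses the asymptotic invariance $\|\delta_\gamma m_n - m_n\| \to 0$, and then needs dominated convergence to interchange sum and limit; this forces you to control both $\|\mu_\gamma\|_G$ and $e^{s\ell(\gamma)}$ simultaneously, which is why you end up with $s \leq s_0'/3$. Your second convolution by $\mu$ is also unnecessary. Both arguments rest on the same structural idea (the fundamental-domain decomposition of $g\mu$), but working with $P$ rather than $m_n$ makes the invariance step immediate and sharpens the constant.
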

\begin{proof} If $\mathcal C_{s\ell+c}(\Gamma)$ has a Kazhdan projection for some $s,c \geq 0$, then in particular $\Gamma$ has property (T) and $G$ also. Let $\Omega$ be the fundamental domain et $s_0>0$ the real number given by Theorem \ref{thm:measure_of_cusps}. Let $\mu$ be the uniform probability measure on $\Omega$. If $s \leq s_0$, $\mu$ belongs to $C_{s\ell+c,\mathcal E}(G)$ with norm $\leq \int_\Omega e^{s\ell(g) +c} d\mu(g)$, and more generally the map $f \in \C[\Gamma] \mapsto \mu f $ (the convolution of $\mu$ and of $f$, seen as the measure $\sum_\gamma f(\gamma) \delta_\gamma$ on $G$) extends to a linear map of norm $\leq  \int_\Omega e^{s\ell(g) +c}d\mu(g)$ from $\mathcal C_{s\ell+c,\mathcal E}(\Gamma)$ to $\mathcal C_{s\ell+c,\mathcal E}(G)$. We claim that if $P \in \mathcal C_{s\ell+c,\mathcal E}(\Gamma)$ is a Kazhdan projection, then $\mu P \in \mathcal C_{s\ell+c,\mathcal E}(G)$ is also a Kazhdan projection. We have to prove that for every $g_1 \in G$, $\delta_{g_1} \mu P = \mu P$. As in the preceding proof, we can decompose the probability measure $\delta_{g_1} \mu$ as a sum $\sum_{\gamma \in \Gamma} \mu_\gamma \delta_{\gamma}$ for a family of measures $\mu_\gamma$ on $\Omega$ suming to $\mu$. This leads to the desired formula
  \[ \delta_{g_1} \mu P =  \sum_{\gamma} \mu_\gamma \delta_{\gamma}P= \sum_{\gamma} \mu_\gamma P = \mu P\]
  where the middle inequality is because $P$ is a Kazhdan projection for $\Gamma$, and where the justification of the summability of both series is straighforward. 
\end{proof}
A direct corollary of Proposition \ref{prop:StrongTafterrestriction} is that the negation of strong property (T) is inherited by lattices in full generality.
\begin{cor}\label{cor:NotStrTpasses_to_lattices} Let $\Gamma$ be a lattice in a locally compact group $G$. If $G$ does not have strong property (T), then $\Gamma$ does not have strong property (T) either.
\end{cor}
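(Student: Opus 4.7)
The plan is to prove the contrapositive: assume $\Gamma$ has strong property (T), and deduce that $G$ also has it. The whole statement is meant to be a straightforward corollary of Proposition \ref{prop:StrongTafterrestriction}, so the only real task is to line up the quantifiers correctly, since strong property (T) is a statement about \emph{every} length function with \emph{some} constant $s$ that works for \emph{all} $c\geq 0$.

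Fix an arbitrary length function $\ell$ on $G$. Its restriction $\ell|_\Gamma$ is a length function on $\Gamma$, and strong property (T) for $\Gamma$ applied to $\ell|_\Gamma$ yields some $s'>0$ such that $\mathcal C_{s'\ell|_\Gamma+c}(\Gamma)$ admits a Kazhdan projection for every $c\geq 0$. Now let $s_0>0$ be the constant supplied by Proposition \ref{prop:StrongTafterrestriction} for this pair $(\Gamma\subset G,\ell)$, and set $s=\min(s',s_0)$.

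The intermediate point I need is that decreasing the parameter $s$ preserves the existence of Kazhdan projections in $\mathcal C_{s\ell|_\Gamma+c}(\Gamma)$. This is because the class of representations $\pi$ with $\|\pi(\gamma)\|\leq e^{s\ell(\gamma)+c}$ is a subclass of those with $\|\pi(\gamma)\|\leq e^{s'\ell(\gamma)+c}$ whenever $s\leq s'$, so the seminorm defining $\mathcal C_{s\ell|_\Gamma+c}(\Gamma)$ is dominated by the one defining $\mathcal C_{s'\ell|_\Gamma+c}(\Gamma)$, giving a canonical contractive morphism $\mathcal C_{s'\ell|_\Gamma+c}(\Gamma)\to \mathcal C_{s\ell|_\Gamma+c}(\Gamma)$. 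The image of a Kazhdan projection under this morphism is still a Kazhdan projection, since it implements the projection onto invariant vectors for every representation of the target algebra. Hence $\mathcal C_{s\ell|_\Gamma+c}(\Gamma)$ has a Kazhdan projection for every $c\geq 0$.

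Since $s\leq s_0$, Proposition \ref{prop:StrongTafterrestriction} transfers each of these Kazhdan projections into a Kazhdan projection in $\mathcal C_{s\ell+c}(G)$. As this produces the required constant $s>0$ for our arbitrarily chosen length function $\ell$ on $G$, we conclude that $G$ has strong property (T), contradicting the hypothesis. There is no genuine obstacle in the argument; everything is already packaged in Proposition \ref{prop:StrongTafterrestriction}, and the corollary amounts to observing that strong property (T) is monotone in $s$ and applying the proposition for each length function on $G$.
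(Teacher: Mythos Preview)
Your proof is correct and follows exactly the approach the paper intends: the paper states that the corollary is ``a direct corollary of Proposition \ref{prop:StrongTafterrestriction}'' without further details, and you have correctly supplied them, including the monotonicity of the existence of Kazhdan projections in $s$ needed to pass from the $s'$ given by strong (T) of $\Gamma$ to the $s_0$ required by Proposition \ref{prop:StrongTafterrestriction}.
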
 

\subsection{End of proof of Theorem \ref{thm:main}}
The fact that every higher rank group satisfies property (*) has already been proven in Section \ref{sec:all_higher_rank_groups}. It remains to prove it for a lattice $\Gamma$ in higher rank group $G$. Let $\ell$ be the word-length function on $G$ with respect to some compact generating set. By Lemma \ref{lem:changing_length} and Lubotzky--Mozes--Raghunathan's Theorem \ref{thm:LMR}, it is enough to prove that $(\Gamma,\ell\left|_\Gamma\right.)$ satisfies (*). By Theorem \ref{thm:measure_of_cusps} and Theorem \ref{thm:exponentially_integrable_lattices}, this follows from the fact, already proven, that $G$ satisfies (*).

\begin{rem}\label{rem:proof_lattices} If we take into account Remark \ref{rem:exponentially_integrable_lattices_Banach} and note that having nontrivial Rademacher type, as well as \eqref{eq:Stheta}, \eqref{eq:Tdelta}, \eqref{eq:Tdeltan} are all Banach-space properties which are stable by the operation $X \mapsto L_2(\Omega,\mu;X)$, we complete similarly the proofs of Theorem \ref{thm:main_Banach_valued_nonarch} and Theorem \ref{thm:main_Banach_valued_real}. In fact we get the following more general result. In the statement, if $G = \prod_{i \in I} G_i$ is a product of higher rank simple groups, the real factors are those $G_i$'s which are real Lie groups, whereas the non-archimedean factors are the others, that is those $G_i$'s which are algebraic groups over non-archimedean local fields.
\end{rem}

\begin{thm}\label{thm:main_Banach_general} Let $G$ be a higher rank group,  $\Gamma \subset G$ a lattice and $\mathcal E$ a class of Banach spaces. Then both $G$ and $\Gamma$ have $(*_{\mathcal E})$ (and therefore strong (T) with respect to $\mathcal E$) if one of the following conditions holds:
  \begin{itemize}
  \item $G$ has no real factor and $\mathcal E$ has nontrivial Rademacher type.
  \item the Lie algebra of every real factor of $G$ contains a Lie subalgebra isomorphic to $\mathfrak{sp}_4$ or $\mathrm{sl}_3$, and there is $\alpha\in (0,1]$ and $C>0$ such that \eqref{eq:Stheta} and \eqref{eq:Tdelta} hold for every $X \in \mathcal E$.
  \item the Lie algebra of every real factor of $G$ contains a Lie subalgebra isomorphic to $\mathfrak{sl}_{3n-3}$ for $n \geq 2$, and there is $\alpha\in (0,1]$ and $C>0$ such that \eqref{eq:Tdeltan} for every $X \in \mathcal E$.
  \end{itemize}
\end{thm}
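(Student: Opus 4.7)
The theorem should follow by assembling the pieces already established in Sections \ref{sec:SL3Sp4}--\ref{sec:lattices}. My plan is to first verify $(*_{\mathcal E})$ for every higher rank simple factor of $G$, then combine factors, and finally descend to the lattice $\Gamma$. For the non-archimedean simple factors of $G$, in all three cases the hypothesis on $\mathcal E$ implies nontrivial Rademacher type: this is immediate in Case 1, and in Cases 2--3 it is noted just after Theorem \ref{thm:main_Banach_valued_real} that each of \eqref{eq:Stheta}, \eqref{eq:Tdelta}, \eqref{eq:Tdeltan} implies nontrivial type. Hence Theorem \ref{thm:main_Banach_valued_nonarch} applies to those factors. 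For the real simple factors under Case 2 (resp.\ Case 3), the assumed $\mathfrak{sl}_3$ or $\mathfrak{sp}_4$ subalgebra (resp.\ $\mathfrak{sl}_{3n-3}$ subalgebra) together with Remark \ref{rem:proof_other_alggroups_Banach} upgrades the basic building-block results of Subsection \ref{subsection:SL3R} and Subsection \ref{subsection:SLbigN} (and the $\mathrm{Sp}_4$ analogue) to $(*_{\mathcal E})$ for the full real factor.

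Having $(*_{\mathcal E})$ for each simple factor, Lemma \ref{lem:direct_products} assembles them into $(*_{\mathcal E})$ for $G = \prod_i G_i$ with the max length function, which by Lemma \ref{lem:changing_length} is equivalent to any word-length from a compact generating set. To pass to a lattice $\Gamma \subset G$, I would apply Theorem \ref{thm:exponentially_integrable_lattices} in the Banach-valued form given by Remark \ref{rem:exponentially_integrable_lattices_Banach}; the exponential integrability of $\Gamma$ needed as input is supplied by Theorem \ref{thm:measure_of_cusps}, while Theorem \ref{thm:LMR} of Lubotzky--Mozes--Raghunathan identifies the induced length function on $\Gamma$ with its own intrinsic word-length up to a quasi-isometry.

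The one subtle point, and the place I expect the main technical care to be needed, is that Remark \ref{rem:exponentially_integrable_lattices_Banach} requires $\mathcal E$ to be stable under the operation $X \mapsto L_2(\Omega,\mu;X)$, which arises inevitably in the induction construction of Subsection \ref{subsection:induction}. The given class $\mathcal E$ need not enjoy such stability, so I would instead enlarge it to the class $\widetilde{\mathcal E}$ of all Banach spaces of the form $L_2(\Omega,\mu;X)$ with $X \in \mathcal E$ (including $\mathcal E$ itself). Each of the three defining hypotheses is preserved under this inflation: nontrivial Rademacher type trivially, and \eqref{eq:Stheta}, \eqref{eq:Tdelta}, \eqref{eq:Tdeltan} because the operators $S_\theta$ and $T^{(n)}_\delta$ act on the first tensor factor of $L_2(\mathrm{SU}(2);L_2(\Omega;X)) \simeq L_2(\mathrm{SU}(2)\times \Omega; X)$ and $L_2(\mathbb S^n; L_2(\Omega;X)) \simeq L_2(\mathbb S^n \times \Omega;X)$, so their norms do not grow under this inflation. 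Applying the first two steps to $\widetilde{\mathcal E}$ and then invoking Remark \ref{rem:exponentially_integrable_lattices_Banach} for $\Gamma$ yields the desired conclusion for $\mathcal E$, completing the proof.
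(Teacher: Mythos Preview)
Your proposal is correct and follows essentially the same route as the paper: the paper's proof is just Remark \ref{rem:proof_lattices}, which assembles Remarks \ref{rem:proof_SL3R_Banach}, \ref{rem:proof_SL3F_Banach}, Subsection \ref{subsection:SLbigN}, Remark \ref{rem:proof_other_alggroups_Banach}, Lemma \ref{lem:direct_products}, and Remark \ref{rem:exponentially_integrable_lattices_Banach}, together with the observation that nontrivial type and conditions \eqref{eq:Stheta}, \eqref{eq:Tdelta}, \eqref{eq:Tdeltan} are preserved under $X \mapsto L_2(\Omega,\mu;X)$. Your explicit enlargement to $\widetilde{\mathcal E}$ is exactly this stability observation spelled out, and your Fubini argument for why the operator-norm estimates persist is the right justification.
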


\bibliographystyle{plain_correctalpha} % style num<E9>rot<E9> en anglais

%\bibliography{biblio}

\end{document}